\declaretheorem[name=Theorem,numberwithin=section]{theorem}
\declaretheorem[name=Proposition,sibling=theorem]{proposition}
\declaretheorem[name=Lemma,sibling=theorem]{lemma}
\declaretheorem[name=Remark,sibling=theorem]{remark}
\declaretheorem[name=Definition,sibling=theorem]{definition}
\newcommand{\PP}{\mathbb P}
\newcommand{\RR}{\mathbb R}
\newcommand{\EE}{\mathbb E}
\newcommand{\NN}{\mathbb N}
\renewcommand{\P}{{\mathsf P}}
\newcommand{\Fcal}{\mathcal F}
\newcommand{\Pcal}{\mathcal P}
\newcommand{\1}{\mathds{1}}
\newcommand{\seq}[4]{(#1)_{{#2}={#3}}^{#4}}
\newcommand{\infseqn}[1]{(#1)_{n \in \NN}}
\newcommand{\infseqm}[1]{\seq{#1}{m}{1}{\infty}}
\newcommand{\ci}{CI}
\newcommand{\cs}{confidence sequence}
\newcommand{\dd}{\mathrm{d}}
\newcommand{\Var}{\mathrm{Var}}
\newcommand{\oPas}{\widebar o_\P}%
\newcommand{\oPcalas}{\widebar o_\Pcal}%
\newcommand{\OPcalas}{\widebar O_\Pcal}%
\newcommand{\oPnullas}{\widebar o_{\Pcal_0}}
\newcommand{\OPnullas}{\widebar O_{\Pcal_0}}
\newcommand{\oP}{{\dot o_{\P}}}
\newcommand{\OP}{{\dot O_{\P}}}
\newcommand{\oPcal}{\dot o_\Pcal}
\newcommand{\OPcal}{\dot O_\Pcal}
\newcommand{\iid}{\text{i.i.d.}}
\newcommand{\indep}{\perp \!\!\! \perp}
\newcommand{\Pnull}{{\Pcal_0}}
\newcommand{\Palt}{\Pcal_1}
\newcommand{\as}{{\mathrm{a.s.}}}
\newcommand{\eps}{\varepsilon}
\newcommand{\LP}{{L_2(\P)}}
\newcommand{\I}{\mathrm{I}}
\newcommand{\II}{\mathrm{II}}
\newcommand{\III}{\mathrm{III}}
\newcommand{\IV}{\mathrm{IV}}
\newcommand{\GCM}{\mathrm{GCM}}
\newcommand{\GCMSP}{\dot{\mathrm{GCM}}}
\newcommand{\GCMWS}{{\widebar{\mathrm{GCM}}}}
\newcommand{\brackm}{{(m)}}
\newcommand{\infseqkm}[1]{\seq{#1}{k}{m}{\infty}}
\newcommand{\ubar}[1]{\underaccent{\bar}{#1}}
\newcommand{\rsrv}{\zeta}
\newcommand{\Psiplus}{\Psi_+}
\newcommand{\psiplus}{\psi_+}
\newcommand{\xy}{{x, y}}
\newcommand{\yx}{{y, x}}
\newcommand{\km}{{k,m}}
\newtheorem{assumptionMain}{Assumption}
\newtheorem{assumptionGCM}{Assumption}
\newtheorem{assumptionSeqGCM}{Assumption}
\newcommand{\highlightcell}[2]{%
  \setlength{\fboxrule}{2pt}%
  \fcolorbox{#1}{white}{#2}}
\renewcommand{\arraystretch}{1.7} %
\newcommand{\Pin}{{\P \in \Pcal}}
\newcommand{\Pnullin}{{\P \in \Pnull}}
\newcommand{\mto}{{m \to \infty}}
\newcommand{\xto}{{x \to \infty}}
\newcommand{\tth}{{\text{th}}}
\newcommand{\supkm}{{\sup_{k \geq m}}}
\newcommand{\pospart}[1]{\left [ #1 \right ]_+}
\newcommand{\abs}[1]{\left \lvert #1 \right \rvert}
\newcommand{\probspace}{(\Omega, \Fcal, \P)}
\newif\ifverbose %
\newcommand{\proofpreamble}{Proof }
\title{Distribution-uniform anytime-valid inference\\and the Robbins-Siegmund distributions}
\author{
  Ian Waudby-Smith$^\dagger$, Edward H. Kennedy$^\ddagger$, and Aaditya Ramdas$^\ddagger$\\
  $^\dagger$ University of California, Berkeley\\
  $^\ddagger$ Carnegie Mellon University\\
  \texttt{ianws@berkeley.edu \{edward,aramdas\}@stat.cmu.edu}
}
\date{}
\begin{document}

\maketitle
\setcounter{tocdepth}{2}
\makeatletter
\renewcommand\tableofcontents{%
  \@starttoc{toc}%
}

\makeatother

\begin{abstract}
  This paper develops a theory of distribution- and time-uniform asymptotics, culminating in the first large-sample anytime-valid inference procedures that are shown to be uniformly valid in a rich class of distributions.
  Historically, anytime-valid methods --- including confidence sequences, anytime $p$-values, and sequential hypothesis tests --- have been justified nonasymptotically. By contrast, large-sample inference procedures such as those based on the central limit theorem occupy an important part of statistical toolbox due to their simplicity, universality, and the weak assumptions they make. While recent work has derived asymptotic analogues of anytime-valid methods, they were not distribution-uniform (also called \emph{honest}), meaning that their type-I errors may not be uniformly upper-bounded by the desired level in the limit. 
  The theory and methods we outline resolve this tension, and they do so without imposing assumptions that are any stronger than the distribution-uniform fixed-$n$ (non-anytime-valid) counterparts or distribution-pointwise anytime-valid special cases.
  It is shown that certain ``Robbins-Siegmund'' probability distributions play roles in anytime-valid asymptotics analogous to those played by Gaussian distributions in standard asymptotics. 
  As an application, we derive the first anytime-valid test of conditional independence without the Model-X assumption.
\end{abstract}

\tableofcontents

\section{Introduction}
Some of the simplest and most ubiquitous statistical methods are asymptotic ones that rely on large-sample theory such as the central limit theorem (CLT). However, there is a distinction between asymptotics that are only valid for a single distribution $\P$ and those that are \emph{uniformly valid} over a large collection of distributions $\Pcal$. To elaborate, consider the classical CLT which states that for independent and identically distributed random variables $X_1, \dots, X_n \sim \P$ with mean $\mu_\P$ and finite variance $\sigma^2_\P < \infty$, their scaled partial sums $\widehat Z_n := \sum_{i=1}^n (X_i - \mu_\P) / (\sigma_\P \sqrt{n})$ are asymptotically standard Gaussian, meaning for any real $x$, we have $\P(\widehat Z_n \leq x) \to \Phi(x)$ where $\Phi$ is the cumulative distribution function (CDF) of a standard Gaussian. 
 Then, the difference between $\P$-pointwise and $\Pcal$-uniform convergence in distribution is:
\begin{equation}\label{eq:p-pointwise-vs-p-uniform-clt}
  \forall x,\ \underbrace{\sup_{\P \in \Pcal}\lim_{n \to \infty }\left \lvert \P (  \widehat Z_n \leq x ) - \Phi(x) \right \rvert = 0}_{\text{$\P$-pointwise convergence in distribution}}\quad\text{versus}\quad \forall x,\ \underbrace{\lim_{n \to \infty }\sup_{\P \in \Pcal}\left \lvert \P  (  \widehat Z_n \leq x  ) - \Phi(x) \right \rvert = 0}_{\text{$\Pcal$-uniform convergence in distribution}}.
\end{equation}
If $\Pcal$ denotes all distributions with finite mean and variance, then $\P$-pointwise convergence does hold for every $\P\in\Pcal$, but $\Pcal$-uniform convergence does not. However, if $\Pcal$ is restricted appropriately (in explicit ways we will return to)
then uniform convergence could hold.

If $\Pcal$-uniform convergence does not hold, then no matter how large $n$ is, $|{\P'}(\widehat Z_n \leq x) - \Phi(x)|$ can be far from zero for \emph{some} $\P' \in \Pcal$. An informal interpretation of the previous sentence is that asymptotic approximations may ``kick in'' arbitrarily late.
By contrast, \emph{distribution-uniformity} rules out the aforementioned scenario.

The initial study of $\Pcal$-uniformity is often attributed to \citet{li1989honest} and many papers have emphasized its importance in recent years; see \citet{kasy2018uniformity,rinaldo2019bootstrapping,tibshirani2018uniform,kuchibhotla2022median}, and \citet{lundborg2022conditional}. We highlight in particular the influential paper of~\citet{shah2020hardness}, which pointed out the critical role of distribution-uniform asymptotics in conditional independence testing, a problem that we will focus on in \cref{section:SeqCIT} of this paper.

\begin{remark}[On the terms ``honesty'' and ``uniformity'']\label{remark:honesty}
  The literature sometimes refers to distribution-uniformity as ``honesty'' \citep{li1989honest,kuchibhotla2022median} or simply ``uniformity'' \citep{kasy2018uniformity,rinaldo2019bootstrapping,tibshirani2018uniform,shah2020hardness,lundborg2022conditional}. We opt for the phrase ``distribution-uniform'' --- or ``$\Pcal$-uniform'' when we want to specify that uniformity is with respect to $\Pcal$ --- since there are many other notions of uniformity throughout probability and statistics, including time-uniformity and quantile-uniformity, both of which will become relevant throughout this paper. We do not use the term ``honesty'' as it has also been used to refer to other properties of estimators in statistical inference \citep{wager2018estimation,athey2019generalized}.
\end{remark}

Simultaneously, there is a parallel literature on \emph{anytime-valid} inference where the goal is to derive confidence sequences (CSs), anytime $p$-values, and sequential hypothesis tests (to be defined more formally later) which should be viewed as time-uniform analogues of confidence intervals (CIs), $p$-values, and hypothesis tests respectively that remain valid  when the sample size is a data-dependent stopping time. The overwhelming majority of the literature on anytime-valid inference has taken a nonasymptotic approach to inference so that the type-I errors and coverage probabilities hold in finite samples; see the early work of Wald, Robbins, and colleagues \citep{wald1945sequential,darling1967confidence,robbins1970statistical,lai1976confidence},
as well as the review paper of \citet{ramdas2022game} which gives a broad overview of this literature. However, nonasymptotic approaches generally require relatively strong assumptions on the random variables such as lying in a parametric family, or having \emph{a priori known} bounds on their support or on their moments.
On the other hand, asymptotic procedures such as those based on the CLT only require the \emph{existence} of a finite polynomial moment. The theory and methods developed in this paper adopt the latter approach to statistical inference where type-I errors and coverage probabilities hold in a limiting sense.

To illustrate time-uniformity in the asymptotic regime, we follow~\citet{waudby2021time}. Let $\infseqn{X_n}$ be an \iid{} sequence of random variables on a space $(\Omega, \Fcal, \P)$ with mean $\mu_\P \in \RR$ and variance $\sigma_\P^2 > 0$. We contrast the ``fixed-time'' coverage guarantee of an asymptotic CI $\infseqn{\dot C_n}$ with that of a ``time-uniform'' asymptotic CS $\infseqm{\infseqkm{\widebar C_k^\brackm}}$ as
\begin{equation}\label{eq:cs-vs-ci}
  \underbrace{\limsup_{n \to \infty }\P \left (  \mu_\P \notin \dot C_n \right ) \leq \alpha}_{\text{(Asymptotic) fixed-$n$ CI}}\quad\text{versus}\quad \underbrace{\limsup_{m \to \infty }\P \left ( \exists k \geq m : \mu_\P \notin \widebar C_k^\brackm \right ) \leq \alpha}_{\text{(Asymptotic) anytime-valid CS}}.
\end{equation}
In \eqref{eq:cs-vs-ci}, the main difference lies in the fact that the right-hand side probability holds uniformly in $k \geq m$ for sufficiently large $m$.
From a practical perspective, the right-hand side permits a researcher to continuously monitor the outcome of an experiment, for example, updating their confidence sets as each new data point is collected as long as the starting sample size $m$ is sufficiently large. Importantly, these anytime-valid procedures allow for the experiment to stop \emph{as soon as} the researcher has sufficient evidence to reject some null hypothesis (e.g.~as soon as $0 \notin \widebar C_k^\brackm$ for a null effect of 0). We henceforth omit the ``asymptotic'' phrase when referring to asymptotic procedures such as those in \eqref{eq:cs-vs-ci} since we are solely interested in asymptotics in this paper.
We now summarize our main results informally.
\begin{tcolorbox}
  \begin{theorem}[Distribution-uniform anytime-valid inference (brief \& informal)]\label{theorem:main-result-P-n-alpha-uniform-inference}
    Let $\infseqn{X_n}$ be a sequence of \iid{} random variables defined on 
    $(\Omega, \Fcal, \P)_\Pin$. Suppose that for some $\delta > 0$,
    \begin{equation}\label{eq:uniform-assumptions}
      \sup_\Pin \EE_\P[|X_1 - \mu_\P|^{2+\delta}] < \infty\quad\text{and}\quad \inf_\Pin \Var_\P (X_1) > 0.
    \end{equation}
    Define the set $\widebar C_k^\brackm(\alpha) := \widehat \mu_k \pm \widehat \sigma_k \sqrt{[\Psi^{-1}(1-\alpha) + \log(k/m)] / k}$ for any $\alpha \in (0, 1)$ and any $k \geq m$ where $\widehat \mu_k$ and $\widehat \sigma_k$ are the sample mean and sample standard deviation, respectively, and where $\Psi$ is the distribution function of the Robbins-Siegmund distribution to be introduced in \cref{definition:robbins-siegmund-distribution}. Then, 
  \begin{align}
    \forall \alpha \in (0, 1) \quad \lim_{m \to \infty} \sup_{\P \in \Pcal} \P \left ( \exists k \geq m : \mu_\P \notin \widebar C_k^\brackm(\alpha)  \right ) = \alpha.\label{eq:main-result-cs-conv}
  \end{align}
  Furthermore, defining the random variable $\widebar p_k^\brackm := 1- \Psi \left ( k\widehat \mu_k^2 / \widehat \sigma_k^2 - \log (k / m) \right )$, we have
  \begin{equation}
    \forall \alpha \in (0, 1), \quad \lim_\mto \sup_\Pin \P \left ( \exists k \geq m : \widebar p_k^\brackm \leq \alpha \right ) = \alpha.
  \end{equation}
  \end{theorem}
\end{tcolorbox}

In several works on $\Pcal$-uniform fixed-$n$ inference --- including Theorem 6 in the influential work of \citet{shah2020hardness} --- the exact conditions of \eqref{eq:uniform-assumptions} are assumed to conclude that
\begin{equation}\label{eq:fixed-n-coverage}
  \lim_{n\to \infty} \sup_\Pin \P \left ( \mu_\P \notin \dot C_n(\alpha) \right ) = \alpha,
\end{equation}
for the confidence interval $\dot C_n(\alpha) := \widehat \mu_n \pm \widehat \sigma_n \Phi^{-1}(1-\alpha/2)/\sqrt{n}$ (and analogously for a fixed-$n$ $p$-value). As such, both \eqref{eq:main-result-cs-conv} and \eqref{eq:fixed-n-coverage} follow from the same moment assumptions in \eqref{eq:uniform-assumptions} despite the former being a strictly stronger guarantee.

Notice that the informal theorem above is an asymptotic statement not in terms of typical (lower) triangular arrays but in terms of the \emph{upper} triangular arrays $\infseqm{\infseqkm{\widebar C_k^\brackm}}$ and $\infseqm{\infseqkm{\widebar p_k^\brackm}}$. We expand on this setting more formally in \cref{remark:upper triangular arrays} and justify our phrasing ``upper triangular array'' therein.
Arriving at a statement like the above requires certain technical results that are not common in typical asymptotic statistical theory. Indeed, some of the requisite probabilistic tools are the focus of recent additions to the literature by a subset of the present paper's authors \citep{waudby2024distribution,waudby2025nonasymptotic} and others will be developed here in a bespoke manner.

\subsection{Paper Outline}
Below we outline how the paper will proceed.
\begin{itemize}
  \item We begin in \cref{section:preliminaries} by outlining several mathematical foundations and preliminaries that will be instrumental for understanding and proving the guarantees of our main statistical results. In particular, we provide a notion of distribution-uniform almost-sure consistency in \cref{section:consistency} and show how variances can be estimated as such under weak moment assumptions. We define the Robbins-Siegmund distributions in \cref{section:robbins-siegmund-distribution} and show how they can be formed from transforming continuous-time Wiener processes or by taking the limit of transformed Gaussian sums. In \cref{section:strong-gaussian-coupling}, we review recent results on strong Gaussian approximation and use them in \cref{section:time-quantile-distribution-uniform clt} to show that transformed (non-Gaussian) partial sums converge distribution-uniformly to the aforementioned Robbins-Siegmund distributions.

    \item In \cref{section:distribution-uniform-anytime-valid-inference}, we take the mathematical preliminaries outlined in \cref{section:preliminaries} to both motivate and provide definitions for distribution-uniform confidence sequences, anytime $p$-values, and sequential hypothesis tests. We then arrive at our main methodological contributions: distribution-uniform large-sample statistical procedures for means of random variables with uniformly bounded polynomial moments.

    \item \cref{section:SeqCIT} illustrates an application of the above methodology by deriving what seem to be the first sequential tests of conditional independence without Model-X assumptions. We provide an accompanying impossibility result which shows that sequential tests of conditional independence are impossible to derive without imposing structural assumptions, a fact that can be viewed as a time-uniform analogue of the hardness result due to \citet[\S 2]{shah2020hardness}. Our main algorithm takes the form of a sequential leave-one-out analogue of the Generalized Covariance Measure test due to \citet{shah2020hardness}.

\end{itemize}

\subsection{Notation}
Throughout, we will let $\Omega$ be a sample space, $\Fcal$ the Borel sigma-algebra, and $\Pcal$ a collection of probability measures so that $(\Omega, \Fcal, \P)$ is a probability space for each $\P \in \Pcal$. We will write $(\Omega, \Fcal, \Pcal)$
to refer to the collection of probability spaces $(\Omega, \Fcal, P)_\Pin$.

For any event $A \in \Fcal$, we use $\P(A)$ to denote the probability of that event. We use $\EE_\P(X)$ to denote the expectation of a random variable $X$ on $(\Omega, \Fcal)$ with respect to $\P \in \Pcal$ (provided it exists). Similarly, $\Var_\P(X)$ will be shorthand for $\EE_\P(X - \EE_\P(X))^2$.

\section{Mathematical foundations and preliminaries}\label{section:preliminaries}

In order to arrive at the main results to be found in \cref{section:distribution-uniform-anytime-valid-inference}, we require a number of mathematical preliminaries that are not commonly encountered in the statistical literature. In some of these cases, the preliminaries rely on some recent probabilistic results on strong laws and Gaussian approximation due to a subset of the authors. In other cases, the preliminaries seem to be new. In \cref{section:consistency}, we recall a notion of distribution-uniform almost sure convergence due to \citet{chung2008strong}, and generalize it slightly to encompass distribution-uniform strong consistency of estimators. Using this definition, we provide a result on strongly consistent variance estimation that will be used later on in the proofs of the main results. In \cref{section:robbins-siegmund-distribution}, we define the one- and two-sided Robbins-Siegmund distributions and derive important properties thereof. In \cref{section:strong-gaussian-coupling}, we present a recent definition and theorem due to \citet{waudby2025nonasymptotic} surrounding distribution-uniform strong Gaussian approximation.

\subsection{Distribution-uniform almost-sure convergence and consistency}\label{section:consistency}

The results of \cref{section:distribution-uniform-anytime-valid-inference,section:SeqCIT}  implicitly rely on time-uniform analogues of some distribution-uniform convergence lemmas. In particular, we now review an old notion of distribution-uniform \emph{almost sure} convergence and prove several elementary but crucial properties which will be used later.

Let us first recall the classical notion of convergence in $\P$-probability for a single $\P \in \Pcal$ and its natural extension to $\Pcal$-uniform convergence in probability. That is, a sequence of random variables $X_1, X_2, \dots$ is said to converge \emph{in probability} to 0 (or $X_n = \oP(1)$ for short) if
\begin{equation}\label{eq:pointwise-convergence-in-probability}
   \forall \eps > 0,\quad \sup_\Pin\lim_{n \to \infty}\P(|X_n| \geq \eps) = 0,
\end{equation}
and that this convergence holds uniformly in $\Pcal$ (or $X_n = \oPcal(1)$ for short) if
\begin{equation}\label{eq:uniform-convergence-in-probability}
  \forall \eps > 0,\quad \lim_{n \to \infty} \sup_\Pin \P(|X_n| \geq \eps) = 0.
\end{equation}
The extension of \eqref{eq:pointwise-convergence-in-probability} to \eqref{eq:uniform-convergence-in-probability} is natural, but at first glance, an analogous extension for almost-sure convergence is less obvious. Indeed, recall that a sequence of random variables $X_1, X_2, \dots$ is said to converge $\P$-almost surely to 0 for every $\P \in \Pcal$ if
\begin{equation}\label{eq:pointwise-convergence-almost-surely}
  \forall \P \in \Pcal,\ \P \left (\lim_{n \to \infty} |X_n| = 0\right ) = 1.
\end{equation}
It is not immediately obvious what the ``right'' notion of $\Pcal$-uniform almost-sure consistency ought to be since taking an infimum over $\Pin$ of the above probabilities does not change the statement of \eqref{eq:pointwise-convergence-almost-surely} whatsoever. Intuitively, it is not possible to simply swap limits and suprema in \eqref{eq:pointwise-convergence-almost-surely} as was done when \eqref{eq:pointwise-convergence-in-probability} was extended to \eqref{eq:uniform-convergence-in-probability}.
However, it \emph{is} possible to make such a leap when using an equivalent definition of almost-sure convergence. To elaborate, it is a well-known fact that for any $\P \in \Pcal$,
\begin{equation}\label{eq:equivalence-as-convergence}
  \P \left (\lim_{n \to \infty} |X_n| = 0\right ) = 1 \quad\text{if and only if}\quad\forall \eps > 0,\ \lim_{m \to \infty}\P \left ( \sup_{k \geq m}|X_k| \geq \eps \right ) = 0.%
\end{equation}
In words, almost sure convergence is time-uniform convergence in probability, 
and for this reason, instead of writing $X_n = o_\as(1)$ as a shorthand for $\P$-almost-sure convergence, we write $X_n = \oPas(1)$ with the overhead bar $\widebar o$ to emphasize time-uniformity and the subscript $o_\P$ to emphasize the distribution $\P$ that this convergence is with respect to. 
Now, a natural notion of $\Pcal$-uniform almost-sure convergence is one that places a supremum over $\Pin$ in the right-hand limit of \eqref{eq:equivalence-as-convergence}. In particular, \citet{chung2008strong} states that a sequence $\infseqn{X_n}$ converges to 0 almost surely and distribution-uniformly if
\begin{equation}
  \forall\eps > 0,\quad \lim_\mto \sup_\Pin \P \left ( \supkm |X_k| \geq \eps \right ) = 0.
\end{equation}

\paragraph{Distribution-uniform strongly consistent estimation}
In what follows, we slightly generalize the aforementioned notion of uniform strong convergence due to \citet{chung2008strong} to the case of estimators of estimands that may themselves depend on each distribution. There is little conceptual difference between the following and Chung's notion, but the technical difference will be important once it is applied to statistical inference later on. 
\begin{definition}[$\Pcal$-uniform strong consistency]
  \label{definition:almost-sure-convergence}
  We say that an estimator $\infseqn{\widehat \theta_n}$ is $\Pcal$-uniformly consistent for the real-valued parameter $(\theta(\P))_{\Pin}$ if
\begin{equation}\label{eq:almost-sure-convergence}
  \forall \eps > 0,\quad \lim_{m \to \infty}\sup_\Pin \P \left ( \sup_{k \geq m}|\widehat \theta_k - \theta(\P)| \geq \eps \right ) = 0.
\end{equation}
As a shorthand for the above, we write $\widehat \theta_n - \theta = \oPcalas(1)$. Furthermore, we write $\widehat \theta_n - \theta = \oPcalas(r_n)$ for a monotonically nonincreasing and positive sequence $\infseqn{r_n}$ if $r_n^{-1} (\widehat \theta_n - \theta) = \oPcalas(1)$.
\end{definition}

\cref{table:four-notions-of-convergence} summarizes the four notions of convergence $\oP(\cdot)$, $\oPas(\cdot)$, $\oPcal(\cdot)$, and $\oPcalas(\cdot)$ and the implications between them. 
\renewcommand{\arraystretch}{1.5}
\begin{table}[!htbp]
  \caption{Four notions of convergence with implications between them. 
  If a sequence of random variables converges with respect to one of the four cells below, it also does so with respect to the cell above and/or to the left of it. This section is concerned with the strongest of the four, found in the bottom right cell with the \textbf{bolded} frame: $\Pcal$-uniform almost-sure convergence.}
  \label{table:four-notions-of-convergence}
  \centering
\begin{tabular}{l|>{\centering\arraybackslash}c>{\centering\arraybackslash}c>{\centering\arraybackslash}c}
    & \textbf{$\P$-pointwise} & & \textbf{$\Pcal$-uniform} \\
    \hline
    \textbf{In probability} & $\oP(\cdot )$  & $\impliedby$ & $\oPcal(\cdot )$ \\
     &  $\Uparrow$ & & $\Uparrow$ \\
   \textbf{Almost surely} & $\oPas(\cdot )$ & $\impliedby$ & \highlightcell{black}{$\oPcalas(\cdot )$}  \\
  \end{tabular}
\end{table}

For some of the main results to come, we will require that population variances can be strongly consistently estimated in a multiplicative sense. For such a consistency result, we will make the following assumption.
\begin{assumptionMain}\label{assumption:upper bound on variance}
  Let $X$ be a random variable. There exists some $\delta > 0$ for which
  \begin{equation}
    \sup_\Pin \EE_\P \left [ |X - \EE_\P[X]|^{2 + \delta} \right ] < \infty \quad\text{and}\quad \inf_\Pin \Var_\P (X) > 0.
    \end{equation}
\end{assumptionMain}
The conditions of \cref{assumption:upper bound on variance} are precisely those that appear in \eqref{eq:uniform-assumptions} but we re-state them formally here as we will routinely make use of them in the sections to follow. With \cref{assumption:upper bound on variance} in mind, we state the following result.

\begin{proposition}[$\Pcal$-uniform almost-sure multiplicative consistency of the variance]\label{theorem:consistent-variance-estimation}
  Suppose that $\infseqn{X_n}$ is a sequence of \iid{} random variables satisfying \cref{assumption:upper bound on variance}. Then $\widehat \sigma_n^2 / \sigma^2 - 1 = \oPcalas(n^{-\beta})$
  for $\beta = 1-2 / (2 + \delta / 2)$,
  or more explicitly, for all $\eps > 0$, we have 
  \begin{equation}
    \lim_{m \to \infty} \sup_\Pin \P \left ( \sup_{k \geq m} k^{1-2/(2+\delta / 2)}\left \lvert \frac{ \widehat \sigma_k^2}{\sigma_\P^2} - 1 \right \rvert \geq \eps \right )=0.
  \end{equation}
\end{proposition}
The proof of \cref{theorem:consistent-variance-estimation} can be found in \cref{proof:consistent-variance-estimation} and it relies on a distribution-uniform strong convergence result of \citet[Corollary 1]{waudby2024distribution}.

\paragraph{A calculus of time- and distribution-uniform asymptotics}
  Following the relationship between $\oP$ and $\OP$ notation in the fixed-$n$ in-probability setting, we now provide an analogous definition of time- and $\Pcal$-uniform stochastic boundedness. To the best of our knowledge, this definition is new to the literature.
  \begin{definition}
    \label{definition:time-uniform-stochastic-boundedness}
    We say that a sequence of random variables $\infseqn{X_n}$ is \uline{time- and $\Pcal$-uniformly stochastically bounded} if
    for any $\delta > 0$, there exists some $C\equiv C(\delta) > 0$ and $M\equiv M(\delta) > 1$ so that for all $m \geq M$,
    \begin{equation}
      \sup_\Pin \P \left ( \exists k \geq m : |X_k| > C \right ) < \delta,
    \end{equation}
    and we write $X_n = \OPcalas(1)$ as a shorthand for the above. Similar to \cref{definition:almost-sure-convergence}, we write $X_n = \OPcalas(r_n)$ if $r_n^{-1} X_n = \OPcalas(1)$ for a monotonically nonincreasing and positive sequence $\infseqn{r_n}$.
  \end{definition}
  A related condition has also appeared in the context of conditional local independence testing as in \citet{christgau2023nonparametric}.
Note that we do not refer to \cref{definition:time-uniform-stochastic-boundedness} as $\Pcal$-uniform ``almost sure'' boundedness since even in the $\P$-pointwise case, almost-sure boundedness and time-uniform stochastic boundedness are not equivalent despite the relationship in \eqref{eq:equivalence-as-convergence} for almost-sure and time-uniform \emph{convergence}.
  There is a calculus of $\oPcalas(\cdot)$ and $\OPcalas(\cdot)$ analogous to that for $\oPcal(\cdot)$ and $\OPcal(\cdot)$. We lay this out formally in the following lemma.
\begin{proposition}[Calculus of $\OPcalas(\cdot)$ and $\oPcalas(\cdot)$]\label{proposition:oPcalas-calculus}
  Let $\infseqn{X_n}$ be a sequence of random variables. Let $\infseqn{a_n}$ and $\infseqn{b_n}$ be positive and monotonically nonincreasing sequences. Then we have the following implications.
  \begin{align}
    X_n = \oPcalas(a_n) \implies& X_n = \OPcalas(a_n) \label{eq:oPcalas-implies-OPcalas}\\
    X_n = \oPcalas(a_n)\OPcalas(b_n) \implies& X_n = \oPcalas(a_nb_n)\label{eq:product-is-oPcalas}\\
    X_n = \OPcalas(a_n)\OPcalas(b_n) \implies& X_n = \OPcalas(a_nb_n)\label{eq:product-is-OPcalas}\\
  X_n =  \oPcalas(a_n) + \OPcalas(a_n) \implies& X_n = \OPcalas(a_n)\label{eq:sum-is-OPcalas}\\
    X_n =  \oPcalas(a_n) + \oPcalas(b_n) \implies& X_n = \oPcalas(\max\{a_n, b_n\})\label{eq:sum-of-oPcalas} \\
    X_n = \OPcalas(a_n) + \OPcalas(b_n) \implies& X_n = \OPcalas(\max\{a_n , b_n\}) \label{eq:sum-of-OPcalas}\\
    X_n = \OPcalas(a_n) \text{ and } a_n /b_n \to 0 \implies& X_n = \oPcalas(b_n)\label{eq:slightly-faster-convergence}\\ 
    X_n = \oPcalas(1) \implies& (1 + X_n)^{-1} = 1 + \oPcalas(1).\label{eq:reciprocal of 1 plus oPcalas}
  \end{align}
\end{proposition}
The proof of \cref{proposition:oPcalas-calculus} is routine but we provide it for completeness in \cref{proof:oPcalas-calculus}.
The calculus outlined in \cref{proposition:oPcalas-calculus} will appear frequently throughout the proofs of our main results.
In the next section, we introduce the Robbins-Siegmund distributions.

\subsection{The Robbins-Siegmund distributions}\label{section:robbins-siegmund-distribution}

Central to the methodological results to come are two probability distributions for suprema of transformed Wiener processes which were alluded to in the informal theorem provided in the introduction. To the best of our knowledge, the quantiles of these distributions were first implicitly computed by \citet{robbins1970boundary} and as such we refer to them as the \emph{Robbins-Siegmund distributions}. In this section, we define them explicitly and show how the suprema of scaled Wiener processes have these distributions.

\begin{definition}[The Robbins-Siegmund distributions]
  \label{definition:robbins-siegmund-distribution}
  Let $\Phi$ and $\phi$ be the CDF and density of a standard Gaussian random variable, respectively. Suppose that a random variable $\rsrv$ supported on $[0, \infty)$ has a CDF given by
  \begin{equation}
    \P(\rsrv \leq x) = 1 - 2 \left [ 1 - \Phi(\sqrt{x}) + \sqrt{x} \phi(\sqrt{x}) \right ]; \quad x \geq 0.
  \end{equation}
  Then we say that $\rsrv$ follows the \underline{two-sided Robbins-Siegmund distribution} and we denote its CDF by $\Psi(x) = \P (\rsrv \leq x)$ for $x \geq 0$ and $\Psi(x) = 0$ for $x < 0$.
  Alternatively, if the CDF of $\rsrv$ satisfies
  \begin{equation}
    \P(\rsrv \leq x) = \Phi(\sqrt{x}) - \phi(\sqrt{x}) \left ( \sqrt{x} + \frac{\phi(\sqrt{x})}{\Phi(\sqrt{x})} \right ),
  \end{equation}
  then we say that $\rsrv$ follows the 
  \uline{one-sided Robbins-Siegmund distribution} and denote its CDF by $\Psiplus(x) = \P(\rsrv \leq x)$ for $x \geq 0$ and $\Psiplus(x) = 0$ for $x < 0$.
\end{definition}
We remark that $\Psiplus$ is c\`adl\`ag despite the discontinuity at $0$. %
The two-sided Robbins-Siegmund distribution has also been implicitly used in \citet{bibaut2022near} and \citet{waudby2021time} for the sake of $\P$-pointwise anytime-valid inference, but here we aim to elevate its (and its one-sided counterpart's) status to central objects whose fundamental nature have not been fully appreciated in the literature thus far. We do not know of the one-sided Robbins-Siegmund distribution having been used for inferential purposes in the literature.

\begin{proposition}\label{proposition:properties of psi}
  The probability density function of the two-sided Robbins-Siegmund distribution is
  \begin{equation}
    \psi(x) := \frac{\dd \Psi(x)}{\dd x} = \sqrt{x} \phi(\sqrt{x});\quad x \geq 0.
  \end{equation}
  The probability density function of a one-sided Robbins-Siegmund random variable is
  \begin{equation}
    \psiplus(x) := \frac{\dd \Psiplus(x)}{\dd x} = \frac{\phi(\sqrt{x})}{2 \sqrt{x} \Phi(\sqrt{x})^2} \left ( \sqrt{x} \Phi(\sqrt{x}) + \phi(\sqrt{x}) \right )^2 \quad x > 0.
  \end{equation}
   Also, $\psi$ is uniformly bounded on $[0, \infty)$ and for any $a > 0$, $\psiplus$ is uniformly bounded on $[a, \infty)$. Consequently, $\Psi$ and $\Psiplus$ are Lipschitz continuous on $[0,\infty)$ and $[a,\infty)$, respectively.
\end{proposition}
We prove \cref{proposition:properties of psi} in \cref{proof:properties of psi}.
    Next, we show that $\Psi$ and $\Psiplus$ can be constructed from certain transformations of continuous time Wiener processes and that they are limiting distributions of certain transformations of Gaussian partial sums. 

\begin{proposition}[The Robbins-Siegmund distributions from transformed Gaussian processes]\label{proposition:robbins-siegmund from Gaussians}
  Let $g^\star : \RR \to \RR^+$ be the function given by $ g^\star (x) = x^2 + 2 \log (\Phi(x))$.
  We have the following four facts.
  \begin{enumerate}[label = (\roman*)]
  \item Let $(W(t))_{t\geq0}$ be a Wiener process. Then
    \begin{equation}
      \forall x \geq 0,\quad \P \left ( \sup_{t \geq 1} \left \{ \frac{W(t)^2}{t} - \log (t) \right \} \leq x \right ) = \Psi(x).
    \end{equation}
    \item 
    Furthermore, it holds that
    \begin{equation}
      \forall x > 0,\quad \P \left ( \sup_{t \geq 1} \left \{ g^\star \left ( \frac{W(t)}{\sqrt{t}} \right ) - \log(t) \right \} \leq x \right ) = \Psiplus(x).
    \end{equation}
  \item Let $G_k := \sum_{i=1}^k Y_i$ be a sum of $k$ \iid{} Gaussian random variables $Y_1, \dots, Y_k$ with mean zero and variance $\sigma^2$. Then,
    \begin{equation}
      \lim_\mto \sup_{x \geq 0} \left \lvert \P \left ( \supkm \left \{ \frac{G_k^2}{k \sigma^2} - \log \left ( \frac{k}{m} \right ) \right \} \leq x \right ) - \Psi(x) \right \rvert = 0.
    \end{equation}
  \item Furthermore,
  \begin{equation}
    \lim_\mto \sup_{x \geq 0} \left \lvert \P \left (   \supkm \left \{  g^\star \left ( \frac{G_k}{\sqrt{k\sigma^2} } \right ) - \log \left ( \frac{k}{m} \right ) \right \} \leq x  \right ) - \Psiplus(x) \right \rvert = 0.
  \end{equation}
  \end{enumerate}
\end{proposition}
A proof is provided in \cref{proof:robbins-siegmund from Gaussians} and relies on extending some results of \citet{robbins1970boundary} to the quantile-uniform setting.

\subsection{Distribution-uniform strong Gaussian approximation}\label{section:strong-gaussian-coupling}
The results alluded to in the introduction will be derived by showing that certain suprema of transformations of (typically non-Gaussian) data converge in distribution to the Robbins-Siegmund distributions in a similar spirit to \cref{proposition:robbins-siegmund from Gaussians}. At first glance, however, one might expect classical limit theorems (such as the CLT) to be poorly equipped for such pursuits as they are focused on providing statements about the large-sample behavior of statistics for a single sample size $n$. By contrast, the informal theorem presented in the introduction demands that such approximations hold uniformly in $k \geq m$ for large $m$. One set of tools that achieves time-uniform Gaussian approximations are so-called ``strong Gaussian approximations'' which can be viewed as analogues of CLTs that are written in terms of almost-sure couplings \citep{strassen1964invariance,strassen1967and,komlos1975approximation,komlos1976approximation}. Particularly relevant to the present paper is the recent work of \citet{waudby2025nonasymptotic} which provides \emph{distribution-uniform} versions of these types of couplings. Since we will rely on one of their results throughout the proofs of our main theorems, we present a simplification of it formally here.
\begin{theorem}[A simplification of Theorem 4 from \citep{waudby2025nonasymptotic}]\label{theorem:strong-gaussian-approx}
  Let $\infseqn{X_n}$ be \iid{} mean-zero random variables on the collection of probability spaces $(\Omega,\Fcal, \Pcal)$ and assume without loss of generality that for each $\Pin$, $\probspace$ is sufficiently rich as to describe Gaussian laws. Under \cref{assumption:upper bound on variance}, there exist mean-zero independent Gaussians $\infseqn{Y_n}$ on the same space with variance $\Var_{\P}(Y_1) = \Var_{\P}(X_1)$ so that
  \begin{equation}
    \forall \eps > 0, \quad \lim_\mto \sup_{\Pin}\P \left ( \supkm \left \lvert \frac{\sum_{i=1}^k ( X_i - Y_i) }{k^{1/q}} \right \rvert \geq \eps \right ) = 0.
  \end{equation}
\end{theorem}
Informally, the above result should be viewed as saying that under certain moment assumptions, the sample average $\frac{1}{n}\sum_{i=1}^n X_i$ is well approximated by an implicit sample average of \iid{} Gaussians $\frac{1}{n} \sum_{i=1}^n Y_i$ at a rate of $n^{1/q - 1}$ and uniformly in $\Pcal$. Using the notation of \cref{section:consistency}, we can alternatively write that under \cref{assumption:upper bound on variance}, there exists 
an implicit sum of \iid{} Gaussians $\infseqn{Y_n}$ so that $\sum_{i=1}^n X_i - \sum_{i=1}^n Y_i = \oPcalas(n^{1/q})$. 

\begin{remark}[On the phrase ``without loss of generality'' in strong approximations]\label{remark:strongapprox wlog}
  Strictly speaking, the result in \citet[Theorem 4]{waudby2025nonasymptotic} requires one to construct a new probability space $(\widetilde \Omega, \widetilde \Fcal, \widetilde \P)$ for each $\Pin$ since the initial space may not have been sufficiently rich to describe Gaussian laws. It is for this reason that we assumed without loss of generality that $(\Omega, \Fcal, \P)$ can do so. This is a common preamble in results on strong Gaussian approximation \citep{strassen1967and}. However, this technical subtlety will not matter for the main results to come since they will be written purely in terms of probabilities and events with respect to the original space.
\end{remark}

\subsection{Time-, quantile-, and distribution-uniform central limit theory}\label{section:time-quantile-distribution-uniform clt}

Recall that in the fixed-$n$ setting, the ($\Pcal$-uniform) CLT can often be found stated for a single quantile, meaning that the CDF $\P (S_n / (\sigma\sqrt{n}) \leq x)$ of $\sqrt{n}$-scaled partial sums $S_n := \sum_{i=1}^n [ X_i - \EE_\P(X) ]$ ($\Pcal$-uniformly) converges to that of a standard Gaussian:
\begin{equation}\label{eq:classical-pointwise-clt}
  \forall x \in \RR,\ \lim_{n \to \infty} \sup_\Pin \left \lvert  \P\left (S_n/(\sigma \sqrt{n}) \leq x \right ) - \Phi(x)   \right \rvert = 0,
\end{equation}
with $\Pcal = \{\P\}$ for the distribution-pointwise case.
Under no additional assumptions, however, the above holds \emph{quantile}-uniformly \citep[Lemma 2.11]{van2000asymptotic}, meaning that
\begin{equation}\label{eq:quantile-uniform-clt}
  \lim_{n \to \infty} \sup_\Pin \sup_{x \in \RR} \left \lvert  \P \left (S_n/ (\sigma \sqrt{n}) \leq x \right) - \Phi(x)   \right \rvert = 0.
\end{equation}
Clearly, \eqref{eq:quantile-uniform-clt} implies \eqref{eq:classical-pointwise-clt}. Particularly relevant to this paper, key steps in the proofs of distribution-uniform \emph{fixed-$n$} tests and \ci{}s rely on quantile-uniformity. Even in the $\P$-pointwise case, however, there is no result showing that such quantile-uniformity exists for time-uniform \emph{boundaries} (and it is not clear \emph{a priori} in what sense such a statement should or could be formulated). The following theorem provides such a result, making use of the Robbins-Siegmund distributions introduced in the previous section.
\begin{theorem}\label{theorem:main-convergence-in-distribution}
  Let $\infseqn{X_n}$ be \iid{} random variables satisfying \cref{assumption:upper bound on variance} and put $S_n := \sum_{i=1}^n X_i$ for $n \in \NN$.
  Then
  \begin{equation}\label{eq:main-theorem-two-sided}
    \lim_\mto \sup_\Pin \sup_{x \geq 0} \left \lvert \P \left ( \supkm \left \{ \frac{S_k^2}{\widehat \sigma_k^2 k} - \log \left ( \frac{k}{m} \right ) \right \} \geq x \right ) - [ 1 - \Psi(x) ] \right \rvert = 0.
  \end{equation}
  Letting $g(x) = \left ( x^2 + 2 \log \Phi(x) \right ) \lor 0$ for $x \in \RR$ and $a_1 := \Psiplus^{-1}(1/2)$, we also have that
  \begin{equation}\label{eq:main-theorem-one-sided}
    \lim_\mto \sup_\Pin \sup_{x \geq a_1} \abs{ \P \left ( \supkm \left \{ g \left ( \frac{S_k \lor 0}{\widehat \sigma_k \sqrt{k}}\right ) - \log \left ( \frac{k}{m} \right ) \right \} \geq x \right ) - [1-\Psiplus(x)] } = 0.
  \end{equation}
\end{theorem}

The sense in which \cref{theorem:main-convergence-in-distribution} can be viewed as ``boundary-uniform'' results is as follows. It is easy to deduce that the expression in \eqref{eq:main-theorem-two-sided} is equivalent to
\begin{equation}
  \lim_\mto \sup_\Pin \sup_{x\geq 0} \left \lvert \P \left ( \exists k \geq m : |S_k| \geq \widehat \sigma_k \sqrt{k \left (x + \log \left ( \frac{k}{m} \right )\right )} \right ) - [1 - \Psi(x)]\right \rvert = 0
\end{equation}
and the expression \eqref{eq:main-theorem-one-sided} is equivalent to
\begin{equation}
  \lim_\mto \sup_\Pin \sup_{x\geq a_1} \left \lvert \P \left ( \exists k \geq m : S_k \lor 0 \geq \widehat \sigma_k \sqrt{k} g^{-1} \left (x + \log \left ( \frac{k}{m} \right ) \right ) \right ) - [1 - \Psi(x)]\right \rvert = 0
\end{equation}
Taking the suprema over $\Pin$ and $x \geq 0$ (or $x \geq a_1$) out of the limits and replacing $\widehat \sigma_k$ with $\sigma$ recovers some results of \citet{robbins1970boundary}, essentially showing that their results can be made distribution- and boundary-uniform.
Given its centrality to the main results to come, we provide a brief proof sketch for the two-sided case in \eqref{eq:main-theorem-two-sided} but detailed proofs for both \eqref{eq:main-theorem-two-sided} and \eqref{eq:main-theorem-one-sided} can be found in \cref{proof:main-convergence-in-distribution}.
\begin{proof}[\proofpreamble{}sketch of \cref{theorem:main-convergence-in-distribution}]
  We aim to show that the difference in \eqref{eq:main-theorem-two-sided} is smaller than an arbitrary $\eps > 0$. Take $\delta > 0$ to be a small constant. First, we use \cref{theorem:consistent-variance-estimation} to take $m \in \NN$ sufficiently large so that
  \begin{equation}
    \sup_\Pin \P \left ( \supkm \left \lvert \frac{\widehat \sigma_k^2}{\sigma_\P^2} - 1 \right \rvert \geq \delta \right ) \ll \eps.
  \end{equation}
  Second, we use \citet[Theorem 4]{waudby2025nonasymptotic} as discussed in \cref{section:strong-gaussian-coupling} to take $m \in \NN$ large enough so that for some $\beta > 0$,
  \begin{equation}
    \sup_\Pin \P \left ( \supkm  \abs{\frac{S_k - G_k}{k^{1/\beta}}} \geq \delta \right ) \ll \eps.
  \end{equation}
  for a sum $G_n = \sum_{i=1}^n Y_i$ of \iid{} Gaussian random variables $\infseqn{Y_n}$ with mean zero and variance $\Var_\P(Y_1) = \sigma_\P^2$ for each $\Pin$. Taking these two approximations together, we have that
  \begin{equation}
    \P \left ( \supkm \left \{ \frac{S_k^2}{\widehat \sigma_k^2 k} - \log \left ( \frac{k}{m} \right ) \right \} \geq x \right ) \approx \P \left ( \supkm \left \{ \frac{G_k^2}{\sigma_\P^2 k} - \log \left ( \frac{k}{m} \right ) \right \} \geq x \pm \delta \right ) \pm \eps.
  \end{equation}
  Taking $m$ large, we have that for all $x \in \RR$,
 \begin{equation}
   \P \left ( \supkm \left \{ \frac{G_k^2}{\sigma_\P^2 k} - \log \left ( \frac{k}{m} \right ) \right \} \geq x \pm \delta \right ) \approx 1-\Psiplus(x \pm \delta),
 \end{equation} 
 and since $\Psiplus$ is uniformly continuous (indeed, Lipschitz, by \cref{proposition:properties of psi}), we have that $\Psiplus(x\pm\delta) \approx \Psiplus(x)$, which completes the sketch of the proof.
\end{proof}
With access to \cref{theorem:main-convergence-in-distribution} and the mathematical preliminaries outlined in this section, we are ready to state and provide proofs for the main methodological results of this paper.

\section{Distribution-uniform anytime-valid inference}\label{section:distribution-uniform-anytime-valid-inference}
Let us begin this section by providing motivation for a particular definition of $\Pcal$-uniform anytime $p$-values, sequential tests, and confidence sequences. Recall that for $\alpha \in (0, 1)$, a classical fixed-$n$ $(1-\alpha)$-confidence interval $\dot C_n$ is said to be $\Pcal$-uniformly valid for the parameters $(\mu_\P)_\Pin$ if
\begin{equation}\label{eq:distribution-uniform-asymptotic-ci}
  \limsup_{n \to \infty} \sup_{\P \in \Pcal} \P ( \mu_\P \notin \dot C_n ) \leq \alpha.
\end{equation}
On the other hand, \citet[Definition 2.7]{waudby2021time} provide a definition of ($\P$-pointwise) time-uniform $(1-\alpha)$-coverage given as follows:
\begin{equation}\label{eq:time-uniform-asymptotic-cs}
  \forall \Pin,\ \limsup_{m \to \infty} \P(\exists k\geq m : \mu_\P \notin \widebar C_k^\brackm ) \leq \alpha.
\end{equation}
The above guarantee has also appeared in \citet{robbins1970boundary} and \citet{bibaut2022near}.
Juxtaposing \eqref{eq:distribution-uniform-asymptotic-ci} and \eqref{eq:time-uniform-asymptotic-cs}, we can intuit the right definition of distribution- \emph{and} time-uniform type-I error control by placing a supremum over $\Pcal$ inside the limit in \eqref{eq:time-uniform-asymptotic-cs}. We now lay this definition out formally alongside corresponding definitions for anytime $p$-values and sequential hypothesis tests as well as sharpness properties thereof.
  \begin{definition}[$\Pcal$-uniform sequential tests, anytime $p$-values, and confidence sequences]
    \label{definition:distribution-time-uniform-type-I-error-control}
    Let $\Pcal$ be a collection of distributions and let $\Pcal_0 \subseteq \Pcal$ be a null hypothesis.
    For $\alpha \in (0, 1)$, we say that $\infseqm{\infseqkm{\widebar \Gamma_k^\brackm}}$ is a \uline{$\Pnull$-uniform level-$\alpha$ sequential hypothesis test} if
    \begin{equation}
      \limsup_{m \to \infty}\sup_\Pnullin \P \left (\exists k \geq m : \widebar \Gamma_k^\brackm = 1 \right) \leq \alpha.
    \end{equation}
 We say that $\infseqm{\infseqkm{\widebar p_k^\brackm}}$ is a \uline{$\Pnull$-uniform anytime $p$-value} if
\begin{equation}\label{eq:p-uniform-p-value}
      \forall \alpha \in (0, 1),\quad\limsup_{m \to \infty} \sup_\Pnullin \P \left (\exists k \geq m : \widebar p_k^\brackm \leq \alpha \right ) \leq \alpha.
\end{equation}
    For $\alpha \in (0, 1)$, we say that $\infseqm{\infseqkm{\widebar C_k^\brackm}}$ is a \uline{$\Pcal$-uniform $(1-\alpha)$-confidence sequence} for $(\theta_\P)_{\Pin}$ if
    \begin{equation}
      \limsup_{m \to \infty} \sup_{\P \in \Pcal} \P \left ( \exists k \geq m : \theta_\P \notin \widebar C_k^\brackm \right ) \leq \alpha.
    \end{equation}
    Finally, we additionally say that the above objects are \uline{sharp} if the limit suprema are limits and the inequalities $(\leq \alpha)$ are equalities $(= \alpha)$.
  \end{definition}
  \begin{remark}[Asymptotics of upper triangular arrays]\label{remark:upper triangular arrays}
    While lower triangular arrays are commonly encountered in asymptotic theory, the definitions above take the form of \emph{upper} triangular arrays. We use the aforementioned name because, for example, an anytime $p$-value $\infseqm{\infseqkm{\widebar p_k^\brackm}}$ can be visually depicted as
\begin{alignat}{3}
  & \widebar p_{2}^{(1)},\ & \widebar p_{3}^{(1)},\ &\widebar p_{4}^{(1)}, &\dots \\
  & & \widebar p_{3}^{(2)},\ &\widebar p_{4}^{(2)}, &\dots \\
  && &\widebar p_{4}^{(3)},\ &\dots \\
  &&&& \ddots
\end{alignat}
and so on. Many of the technical results in this paper can be viewed as new tools for analyzing random upper triangular arrays.
  \end{remark}
As one may expect, any $\Pcal$-uniform anytime-valid test, $p$-value, or \cs{} satisfying  \cref{definition:distribution-time-uniform-type-I-error-control} is also $\Pcal$-uniform for a fixed sample size $n$ in the sense of \eqref{eq:distribution-uniform-asymptotic-ci} as well as $\P$-pointwise anytime-valid for any $\P \in \Pcal$ in the sense of \eqref{eq:time-uniform-asymptotic-cs}.
With \cref{definition:distribution-time-uniform-type-I-error-control} in mind, we will now derive distribution-uniform anytime hypothesis tests, $p$-values, and confidence sequences for the mean of independent and identically distributed random variables.

\subsection{One- and two-sided inference for means of \iid{} random variables}\label{section:uniform-anytime-valid-tests-and-confidence-sequences-for-the-mean}
With the requisite definitions and mathematical preliminaries provided in the previous sections, we now use the Robbins-Siegmund distributions to derive one- and two-sided distribution-uniform inferential procedures for means. Let us begin with the two-sided case.

Let $\infseqn{X_n}$ be random variables, $\widehat \mu_n := \frac{1}{n} \sum_{i=1}^n X_i$ their sample mean, and $\widehat \sigma_n^2 := \frac{1}{n} \sum_{i=1}^n (X_i - \widehat \mu_n)^2$ their sample variance. Let $\Psi$ be the two-sided Robbins-Siegmund distribution (\cref{definition:robbins-siegmund-distribution}). Define the upper triangular array $\infseqkm{\widebar p_k^\brackm}$ given by 
  \begin{equation}
    \widebar p_k^\brackm := 1-\Psi \left ( k \widehat \mu_k^2 / \widehat \sigma_k^2 - \log(k / m) \right )
  \end{equation}
  and the upper triangular array of intervals $\infseqkm{\widebar C_k^\brackm(\alpha)}$  given by
  \begin{equation}
    \widebar C_k^\brackm(\alpha) := \widehat \mu_k \pm \widehat \sigma_k \sqrt{\frac{\Psi^{-1}(1-\alpha) + \log(k/m)}{k}}.
  \end{equation}
  The following result gives conditions under which $\infseqkm{\widebar p_k^\brackm}$ is a $\Pcal_0$-uniform anytime $p$-value for the null hypothesis that the mean is zero as well as conditions under which $\infseqkm{\widebar C_k^\brackm(\alpha)}$ is a $\Pcal$-uniform $(1-\alpha)$-\cs{} for the mean in the sense of \cref{definition:distribution-time-uniform-type-I-error-control}.
\begin{theorem}[Two-sided distribution-uniform anytime-valid inference]\label{theorem:two-sided inference}
  Let $\infseqn{X_n}$ be random variables satisfying \cref{assumption:upper bound on variance}.
  Let $\Pcal_0 := \{ \Pin : \EE_\P[X_1] = 0 \}$ be the subcollection of distributions for which $X_1$ has mean zero. Then $\infseqkm{\widebar p_k^\brackm}$ is a sharp $\Pcal_0$-uniform anytime $p$-value:
  \begin{equation}
    \lim_{m \to \infty} \sup_{\P \in \Pcal_0} \P \left ( \exists k \geq m : \widebar p_k^\brackm \leq \alpha \right ) = \alpha,
  \end{equation}
  and $\infseqkm{\widebar C_k^\brackm(\alpha)}$ is a sharp $\Pcal$-uniform \cs{} for the mean:
  \begin{equation}
    \lim_{m \to \infty} \sup_{\P \in \Pcal} \P \left ( \exists k \geq m : \EE_\P[X_1] \notin \widebar C_k^\brackm(\alpha) \right ) = \alpha.
  \end{equation}
\end{theorem}
A sharp $\Pcal_0$-uniform sequential test can be obtained by taking $\widebar \Gamma_k^\brackm := \1 \{ \widebar p_k^\brackm \leq \alpha \}$.
\begin{figure}[!htbp]
  \centering
  \includegraphics[width=0.49\textwidth]{./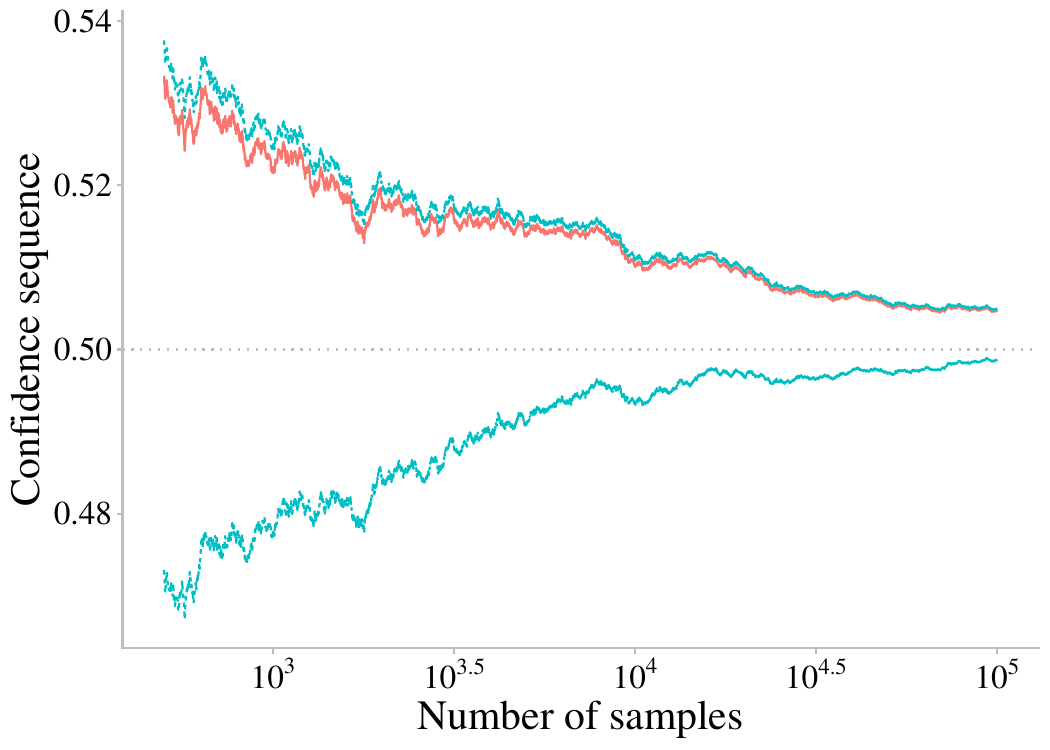}
  \includegraphics[width=0.49\textwidth]{./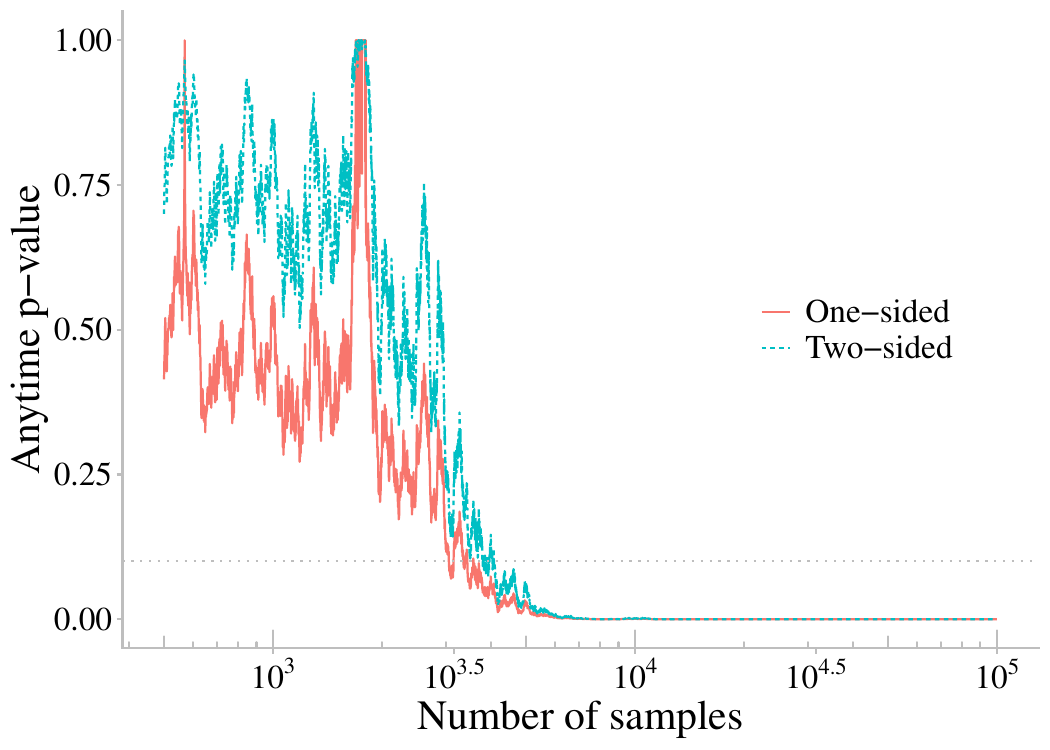}
  \caption{The left-hand side plot contains one- and two-sided confidence sequences for the mean of \iid{} Uniform$[0,1]$ random variables. The right-hand side plot contains one- and two-sided anytime $p$-values for the null hypothesis of $\EE_\P[X_1] = 1/2$ under the alternative $\EE_\P[X_1] = 0.51$.}
  \label{fig:two vs one sided}
\end{figure}
With access to the results in \cref{section:preliminaries}, the proof of \cref{theorem:two-sided inference} is short and can be found in \cref{proof:two-sided inference}.
Let us now move to the one-sided case. 
\begin{theorem}[One-sided distribution-uniform anytime-valid inference]\label{theorem:one-sided inference}
  Let $\infseqn{X_n}$ be random variables satisfying \cref{assumption:upper bound on variance}. Define $\Pcal_0^\leq$ as the subset of distributions with a non-positive mean $\Pcal_0^\leq := \left \{ \Pin : \EE_\P[X_1] \leq 0 \right \}$.
  Then the upper triangular array $\infseqm{\infseqkm{\widebar p_{k}^\brackm}}$ given by
  \begin{equation}
    \widebar p_k^\brackm := 1 - \Psiplus\left (g \left (\sqrt{k(\widehat \mu_k\lor 0)}/ \widehat \sigma_k\right ) - \log (k / m )\right )
  \end{equation}
  is a sharp $\Pcal_0^\leq$-uniform anytime $p$-value:
  \begin{equation}
    \forall \alpha \in (0, 1/2],\quad \lim_\mto \sup_{\P \in \Pcal_0^\leq} \P \left ( \exists k \geq m : \widebar p_k^\brackm \leq \alpha \right ) = \alpha.
  \end{equation}
\end{theorem}
The proof in \cref{proof:one-sided inference} is more involved but follows similarly to that of \cref{theorem:two-sided inference}.
See \cref{fig:two vs one sided} for an illustration of how the downstream confidence sequences and anytime $p$-values compare when derived from two- versus one-sided boundaries as in \cref{theorem:two-sided inference,theorem:one-sided inference}. Let us now use \cref{theorem:two-sided inference} to arrive at Model-X-free sequential tests of conditional independence.

\section{Illustration: Sequential conditional independence testing}\label{section:SeqCIT}

In this section, we aim to derive anytime-valid tests for the null hypothesis, $X \indep Y \mid Z$ given $\RR \times \RR \times \RR^d$-valued triplets $\infseqn{X_n, Y_n, Z_n}$ on probability spaces $(\Omega, \Fcal, \Pcal)$. Several works on conditional independence testing operate under the so-called ``Model-X'' assumption where the conditional distribution of $X \mid Z$ is known exactly \citep{candes2018panning}. We avoid the Model-X assumption in this paper. It is well-known that testing for conditional independence is much simpler under Model-X, and indeed the recent works of \citet{duan2022interactive}, \citet{shaer2023model}, and \citet{grunwald2023anytime} derive powerful (nonasymptotic) anytime-valid tests in that paradigm. Borrowing a quote from the recent work of \citet{grunwald2023anytime}, the authors write \emph{``it is an open
question to us how to construct general sequential tests of conditional independence without the
[Model-X] assumption''}. This section gives an answer to this question, deriving tests that draw inspiration from the batch tests found in \citet{shah2020hardness} --- a pair of authors we will henceforth refer to as \citetalias{shah2020hardness}. Before giving a brief refresher on batch conditional independence testing and the main results of \citetalias{shah2020hardness}, let us review some basic concepts in weak regression consistency since nuisance function estimation will form key conditions for our results.

\subsection{Prelude: weak regression consistency}

An important part of conditional independence testing (in both batch and sequential settings as we will see) is the ability to consistently estimate certain regression functions. Recall that the squared $\LP$ risk of a regression estimator $\widehat f_n : \RR^d \to \RR$ for a function $f : \RR^d \to \RR$ is given by
\begin{equation}
  \| \widehat f_n - f \|_\LP^2 := \int_{z \in \RR^d} \left ( \widehat f_n(z) - f(z) \right )^2 \dd \P(z) .
\end{equation}
Importantly, if sample splitting is used to construct $\widehat f_n$, the norm $\|\cdot \|_\LP$ is to be interpreted as conditional on that ``training'' data. Recall from \citet[Definition 1.1]{gyorfi2002distribution} that a regression estimator $\widehat f_n : \RR^d \to \RR$ is $\P$-weakly consistent for a function $f : \RR^d \to \RR$ in $\LP$ at a rate of $r_n$ if its expected squared $\LP$ risk vanishes at that rate, meaning
\begin{equation}\label{eq:weak-consistency-pointwise}
  \EE_\P \| \widehat f_n - f \|_\LP^2 = o(r_n),
\end{equation}
and hence we will say that $\widehat f_n$ is $\Pcal$-weakly consistent at the rate $r_n$ if the above convergence occurs uniformly in the class of distributions $\Pcal$:
\begin{equation}\label{eq:weak-consistency-uniform}
  \sup_\Pin \EE_\P \|\widehat f_n - f\|_\LP^2 = o(r_n).
\end{equation}
We may omit $\LP$ from  $\| \cdot \|_\LP$ in \eqref{eq:weak-consistency-pointwise} and write $\| \cdot \|$ when the norm is clear from context.

\subsection{A brief refresher on batch conditional independence testing}\label{section:cit-refresher}

Given $\RR \times \RR \times \RR^d$-valued triplets $(X_i, Y_i, Z_i)_{i=1}^n$ from some distribution in a class $\Pcal$, the problem of conditional independence testing is concerned with the null
\begin{equation}\label{eq:cit}
  H_0: X \indep Y \mid Z \quad\text{versus the alternative}\quad H_1: X\ \cancel{\indep}\ Y \mid Z.
\end{equation}
As alluded to before, without the Model-X assumption, powerful tests for the conditional independence null $H_0$ in \eqref{eq:cit} are \emph{impossible} to derive (even in the batch and asymptotic settings) unless additional distributional or structural assumptions are imposed \citepalias[\S 2]{shah2020hardness}. Indeed, \citetalias{shah2020hardness} show that even in the bounded setting where the triplets $(X,Y,Z)$ are supported on $[0,1]^3$, any test with distribution-uniform type-I error control under $H_0$ is powerless against \emph{any} alternative in $H_1$. Formally, if $\Pcal^\star$ is the set of distributions with support $[0, 1]^3$ and $\Pnull^\star \subset \Pcal^\star$ is the subset of distributions satisfying $H_0$ (and hence $\Palt^\star:= \Pcal^\star \setminus \Pnull^\star$ satisfies $H_1$), then
\newcommand{\sphardness}{\underbrace{\sup_{\P \in \Palt^\star} \limsup_{n \to \infty} \P \left ( \dot \Gamma_n = 1 \right )}_{\text{Best-case $\Palt^\star$-pointwise power}} \leq \underbrace{\limsup_{n \to \infty} \sup_{\P \in \Pcal_0^\star} \P \left ( \dot \Gamma_n = 1 \right ).}_{\text{Worst-case $\Pcal_0^\star$-uniform type-I error}}}
\begin{equation}\label{eq:best-case-power-worst-case-type-i-err}
  \sphardness
\end{equation}
As a consequence of \eqref{eq:best-case-power-worst-case-type-i-err}, one cannot derive a more powerful test than the trivial one that ignores all of the data $\seq{X_i,Y_i,Z_i}{i}{1}{n}$ and randomly outputs 1 with probability $\alpha$.

Despite the rather pessimistic result in \eqref{eq:best-case-power-worst-case-type-i-err}, \citetalias{shah2020hardness} derive the Generalized Covariance Measure (GCM) test which manages to achieve nontrivial power while still uniformly controlling the type-I error. The caveat there is that they are controlling the type-I error in a restricted (but nevertheless rich and nonparametric) class of nulls $\Pnull \subseteq \Pcal_0^\star$, and the restriction they impose is that certain nuisance functions are sufficiently estimable, a requirement commonly appearing in other literatures including semiparametric functional estimation \citep{kennedy2022semiparametric,balakrishnan2023fundamental}. Let us now review the key aspects of their test. \citetalias{shah2020hardness} introduce the estimated residuals $R_{i,n}$ for each $i \in [n]$:
\begin{equation}\label{eq:shah-peters-residuals}
  R_{i,n}:=\left \{ X_i - \widehat \mu_n^x(Z_i) \right \} \left \{ Y_i - \widehat \mu_n^y(Z_i) \right \}
\end{equation}
where $\widehat \mu_n^x(z)$ and $\widehat \mu_n^y(z)$ are estimates of the regression functions $\mu^x(z) := \EE_\P(X \mid Z=z)$ and $\mu^y(z) := \EE_\P(Y \mid Z=z)$. As their name suggests, the estimated residuals will serve as estimates for the true residuals $\xi_i := \xi_i^x\xi_i^y$ where
\begin{equation}
  \xi_i^x := \{ X_i - \mu^x(Z_i) \} \quad\text{and}\quad \xi_i^y := \{Y_i - \mu^y(Z_i) \}
\end{equation}
for each $i \in [n]$.
For the remainder of the discussion on batch conditional independence testing, we will assume that $\widehat \mu_n^x(Z_i)$ and $\widehat \mu_n^y(Z_i)$ are constructed from an independent sample (e.g.~through sample-splitting or cross-fitting, in which case we may assume access to $2n$ triplets of $(X, Y, Z)$) for mathematical simplicity, but \citetalias{shah2020hardness} do not always suggest doing so. However, we will not dwell on arguments for or against sample splitting here. From the residuals in \eqref{eq:shah-peters-residuals}, they construct the test statistic $\GCMSP_n$ taking the form
\begin{equation}
  \GCMSP_n := \frac{1}{n \widehat \sigma_n}\sum_{i=1}^n R_{i,n}
\end{equation}
where $\widehat \sigma_n^2 := \frac{1}{n} \sum_{i=1}^n R_{i,n}^2 - \left ( \frac{1}{n}\sum_{i=1}^n R_{i,n} \right )^2$ and they show that if the regression functions $(\mu^y, \mu^x)$ are estimated sufficiently fast (and under some other mild regularity conditions) then $\sqrt{n} \GCMSP_n$ has a standard Gaussian limit, enabling asymptotic (fixed-$n$) inference. We formally recall a minor simplification of their main result here. Consider the following three assumptions for a class of distributions $\Pnull$.

\begin{assumptionGCM}[Product regression error decay]
  \label{assumption:GCM-regression-product-errors}
The weak convergence rate of the product of expected residual norms is faster than $n^{-1/2}$, i.e.
  \begin{equation}
    \sup_\Pnullin \sqrt{\EE_\P\| \mu^x - \widehat \mu_n^x \|^2} \cdot \sqrt{\EE_\P \| \mu^y - \widehat \mu_n^y \|^2} = o(n^{-1/2}).
  \end{equation}
\end{assumptionGCM}

\begin{assumptionGCM}[$\Pnull$-uniform regularity of regression errors]
  \label{assumption:GCM-regression-estimators-do-not-diverge}
  Suppose that the variances of $\{ \widehat \mu_n^x(Z) - \mu^x(Z) \} \xi^y$ and $\{ \widehat \mu_n^y(Z) - \mu^y(Z) \} \xi^x$ are $\Pnull$-uniformly vanishing, i.e.
  \begin{align}
    \sup_\Pnullin \Var_\P \left ( \{ \widehat \mu_n^x(Z) - \mu^x(Z) \} \cdot \xi^y \right )  = o \left ( 1 \right ) ~~ \text{and}~~\sup_\Pnullin \Var_\P \left ( \{ \widehat \mu_n^y(Z) - \mu^y(Z) \} \cdot \xi^x \right )  = o \left ( 1 \right ).
  \end{align}
\end{assumptionGCM}

\begin{assumptionGCM}[$\Pcal_0$-uniformly bounded moments]
  \label{assumption:GCM-finite-moments}
  The true product residuals defined above have $\Pcal_0$-uniformly upper-bounded $(2+\delta)^\tth$ moments for some $\delta > 0$ and uniformly lower-bounded second moments:
  \begin{align}
    \sup_{\P \in \Pcal_0} \EE_\P \left \lvert \xi^x \xi^y \right \rvert^{2+\delta} < \infty \quad\text{and}\quad \inf_{\P \in \Pcal_0} \Var_\P ( \xi^x \xi^y ) > 0.
  \end{align}
\end{assumptionGCM}

With these three assumptions in mind, we are ready to recall a simplified version of \citet[Theorem 6]{shah2020hardness}.
\begin{theorem}[\citetalias{shah2020hardness}: $\Pcal_0$-uniform validity of the GCM test]\label{theorem:shah-peters-gcm}
  Suppose $(X_i, Y_i, Z_i)_{i=1}^n$ are $\RR \times \RR \times \RR^d$-valued random variables. Let $\Pcal_0\subset \Pcal$ be the collection of distributions in $\Pcal$ satisfying the conditional independence null $H_0$ and \namecref{assumption:GCM-finite-moments}s~\ref{assumption:GCM-regression-product-errors}--\ref{assumption:GCM-finite-moments}. Then,
  \begin{equation}\label{eq:shah-peters-type-I-err}
    \lim_{n \to \infty}\sup_{\P \in \Pnull} \sup_{x \in \RR} \left \lvert  \P(\sqrt{n}\GCMSP_n \leq x) - \Phi(x) \right \rvert = 0.
  \end{equation}
and hence $\Gamma_k^\brackm := \1 \left \{ |\sqrt{n}\GCMSP_n| \geq \Phi^{-1}(1-\alpha/2) \right \}$ is a $\Pnull$-uniform level-$\alpha$ test. 
\end{theorem}

We will now shift our focus to \emph{sequential} conditional independence testing with anytime-valid type-I error guarantees. Before deriving an explicit test, we first demonstrate in \cref{theorem:hardness} that the hardness of conditional independence testing highlighted in \eqref{eq:best-case-power-worst-case-type-i-err} has a similar analogue in the anytime-valid regime.

\subsection{On the hardness of anytime-valid conditional independence testing}
As mentioned in \cref{section:cit-refresher}, \citetalias{shah2020hardness} illustrated the fundamental hardness of conditional independence testing by showing that unless additional restrictions are placed on the null hypothesis $\Pcal_0^\star$, any $\Pcal_0^\star$-uniformly valid (fixed-$n$) test is powerless against any alternative (recall \eqref{eq:best-case-power-worst-case-type-i-err}).
Does an analogous result hold if $\dot \Gamma_n$ is replaced by an anytime-valid hypothesis test $\widebar \Gamma_k^\brackm$ as in \cref{definition:distribution-time-uniform-type-I-error-control}? The following theorem gives an answer to this question, confirming that anytime-valid conditional independence testing is fundamentally hard in a sense similar to \eqref{eq:best-case-power-worst-case-type-i-err}. 

\begin{theorem}[Hardness of anytime-valid conditional independence testing]\label{theorem:hardness}
  Suppose $\infseqn{X_n, Y_n, Z_n}$ are $[0, 1]^3$-valued triplets on the probability spaces $(\Omega, \Fcal, \Pcal^\star)$ where $\Pcal^\star$ consists of all distributions supported on $[0, 1]^3$. Let $\Pcal_0^\star \subseteq \Pcal^\star$ be the subset of distributions satisfying the conditional independence null $H_0$ and denote $\Pcal_1^\star := \Pcal^\star \setminus \Pcal_0^\star$. Then for any potentially randomized test $\infseqkm{\widebar \Gamma_k^\brackm}$,
  \begin{equation}\label{eq:hardness-anytime}
    \sup_{\P \in \Pcal_1^\star} \limsup_{m \to \infty} \P \left ( \exists k \geq m : \widebar \Gamma_k^\brackm = 1 \right ) \leq \limsup_{m \to \infty} \sup_{\P \in \Pcal_0^\star} \P \left ( \exists k \geq m : \widebar \Gamma_k^\brackm = 1 \right ).
  \end{equation}
  In other words, no $\Pcal_0^\star$-uniform anytime-valid test can have power against any alternative in $\Pcal_1^\star$ at any $\{m, m+1, \dots \}$-valued stopping time no matter how large $m$ is.
\end{theorem}

The proof can be found in \cref{proof:hardness}. It should be noted that \cref{theorem:hardness} is not a corollary of \citetalias{shah2020hardness}'s fixed-$n$ hardness result in \eqref{eq:best-case-power-worst-case-type-i-err} since while it is true that the time-uniform \emph{type-I error} in the right-hand side of \eqref{eq:hardness-anytime} is always larger than its fixed-$n$ counterpart, the time-uniform \emph{power} in the left-hand side of \eqref{eq:hardness-anytime} is typically much larger than the fixed-$n$ power. Indeed, while an important facet of hypothesis testing is to find tests with high power, the time-uniform power of anytime-valid tests is typically \emph{equal to} 1, and such tests are sometimes referred to explicitly as ``tests of power 1'' for this reason \citep{robbins1974expected}. This should not be surprising since the ability to reject at any stopping time (data-dependent sample size) larger than $m$ introduces a great deal of flexibility. The fact that this flexibility is insufficient to overcome $\Pcal_0^\star$-uniform control of the time-uniform type-I error is what makes \cref{theorem:hardness} nontrivial.

Using the techniques of \cref{section:distribution-uniform-anytime-valid-inference}, we will now derive an anytime-valid analogue of \citetalias{shah2020hardness}'s GCM test with similar distribution-uniform guarantees, allowing the tests and $p$-values to be continuously monitored and adaptively stopped.

\subsection{SeqGCM:~The sequential generalized covariance measure test}\label{section:SeqGCM}

We will now lay out the assumptions required for our SeqGCM test to have distribution-uniform anytime-validity. Similar to our discussion of the batch GCM test in the previous section, we will assume that for each $n$, $\widehat \mu_n^x$ and $\widehat \mu_n^y$ are trained from an independent sample. This can be achieved easily by supposing that at each time $n$, we observe pairs $(X_1^{(n)}, Y_1^{(n)}, Z_1^{(n)}), (X_2^{(n)}, Y_2^{(n)}, Z_2^{(n)})$ where the first is used for training $(\widehat \mu_i^x, \widehat \mu_i^y)_{i=n}^\infty$ and the second is used for evaluating $\{X_n - \widehat \mu_n^x(Z_n) \} \cdot \{Y_n - \widehat \mu_n^y(Z_n)\}$.

Recall that in \citetalias{shah2020hardness}'s GCM test, the test statistic $\GCMSP_n := \frac{1}{n} \sum_{i=1}^n R_{i,n} / \widehat \sigma_n^2$ was built from the product residuals $R_{i,n}$ that were defined in \eqref{eq:shah-peters-residuals}.
In particular, note that the regression estimators $\widehat \mu_n^x$ and $\widehat \mu_n^y$ are trained \emph{once} on a held-out sample of size $n$ and then evaluated on $Z_1, \dots, Z_n$, which is natural in the batch setting. By contrast, we will evaluate the product residual
\begin{equation}\label{eq:our-residuals}
  R_n := \left \{ X_n - \widehat \mu_{n}^x(Z_n) \right \} \left \{ Y_n - \widehat \mu_{n}^y(Z_n) \right \}
\end{equation}
to arrive at the test statistic
\begin{equation}
  \GCMWS_n := \frac{1}{n\widehat \sigma_{n}} \sum_{i=1}^n R_i,
\end{equation}
where we will abuse notation slightly and redefine $\widehat \sigma_n^2 := \frac{1}{n} \sum_{i=1}^n R_i^2 - \left ( \frac{1}{n} \sum_{i=1}^n R_i \right )^2$. The main difference between \eqref{eq:shah-peters-residuals} and \eqref{eq:our-residuals} is that in the latter case, the index for regression estimators $(\widehat \mu_n^x, \widehat \mu_n^y)$ is the same as those on which these functions are evaluated. Notice that while $\GCMWS_n$ is more amenable to online updates than $\GCMSP_n$, it does less to exploit the most up-to-date regression estimates. Nevertheless, as we will see shortly, it is still possible to control the distribution- and time-uniform asymptotic behavior of $\GCMWS_n$ under \emph{weak} regression consistency conditions on $(\widehat \mu_n^x, \widehat \mu_n^y)$. This is in contrast to \citet[Section 3]{waudby2021time} that also considered asymptotic time-uniform inference with nuisance estimation (focusing on the problem of average treatment effect estimation), but relied on \emph{strong} regression consistency conditions. It should be noted that the weak consistency rates we impose here are polylogarithmically faster than those considered by \citet{waudby2021time}. The key technique that will allow us to derive \emph{strong} convergence behavior of certain sample averages of nuisances from \emph{weak} consistency of regression functions is a distribution-uniform strong law of large numbers due to \citet*[Theorem 2]{waudby2024distribution}. This will be discussed further after the statement of \cref{theorem:seq-GCM}.

Since the assumptions required for our SeqGCM test are similar in spirit to those of \citetalias{shah2020hardness}'s batch GCM test (\namecref{assumption:GCM-finite-moments}s~\ref{assumption:GCM-regression-product-errors}, \ref{assumption:GCM-regression-estimators-do-not-diverge}, and~\ref{assumption:GCM-finite-moments}) we correspondingly name them ``\namecref{assumption:SeqGCM-regularity}s~\ref{assumption:SeqGCM-product-errors} and~\ref{assumption:SeqGCM-regularity}'' and \uline{underline} certain keywords to highlight their differences (we do not need to make additional moment assumptions beyond those found in \cref{assumption:GCM-finite-moments}, and thus there is no ``SeqGCM-3'' to introduce).

\begin{assumptionSeqGCM}[Product regression error decay]
  \label{assumption:SeqGCM-product-errors}
The product of weak convergence rates of $(\widehat \mu_n^x, \widehat \mu_n^y)$ is no slower than \uline{$(n\log^{2+\delta} n)^{-1/2}$} for some $\delta > 0$, i.e.
  \begin{equation}
    \exists \delta > 0:\quad \sup_\Pnullin \sqrt{\EE_\P \| \widehat \mu_n^x - \mu^x \|^2} \cdot \sqrt{\EE_\P \| \widehat \mu_n^y - \mu^y \|^2} = O \left ( \frac{1}{\sqrt{n \log^{2+\delta}(n)}} \right ).
  \end{equation}
\end{assumptionSeqGCM}

\begin{assumptionSeqGCM}[$\Pnull$-uniform regularity of regression errors]
  \label{assumption:SeqGCM-regularity}
  Both $\Var \left ( \{ \widehat \mu_n^x(Z) - \mu^x(Z) \} \cdot \xi^y \right )$ and $\Var \left ( \{ \widehat \mu_n^y(Z) - \mu^y(Z) \} \cdot \xi^x \right )$ are $\Pnull$-uniformly vanishing to 0 no slower than $1/(\log n)^{2+\delta}$ for some $\delta > 0$, i.e.
  \begin{align}
    &\sup_\Pnullin \Var_\P \left ( \{ \widehat \mu_n^x(Z) - \mu^x(Z) \} \cdot \xi^y \right )  = O \left ( \frac{1}{(\log n)^{2+\delta}}\right )\\
    \text{and}\quad&\sup_\Pnullin \Var_\P \left ( \{ \widehat \mu_n^y(Z) - \mu^y(Z) \} \cdot \xi^x \right )  = O \left ( \frac{1}{(\log n)^{2+\delta}}\right ).
  \end{align}
\end{assumptionSeqGCM}
\begin{figure}[!htb]
  \centering
  \includegraphics[width=0.49\textwidth]{./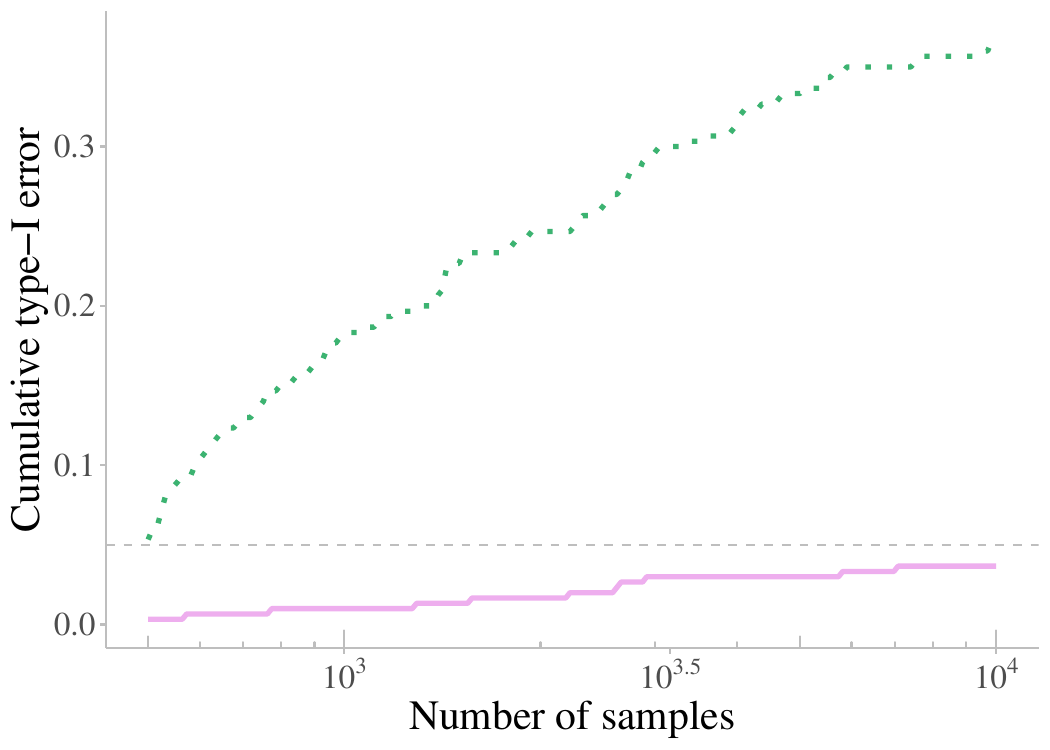}
  \includegraphics[width=0.49\textwidth]{./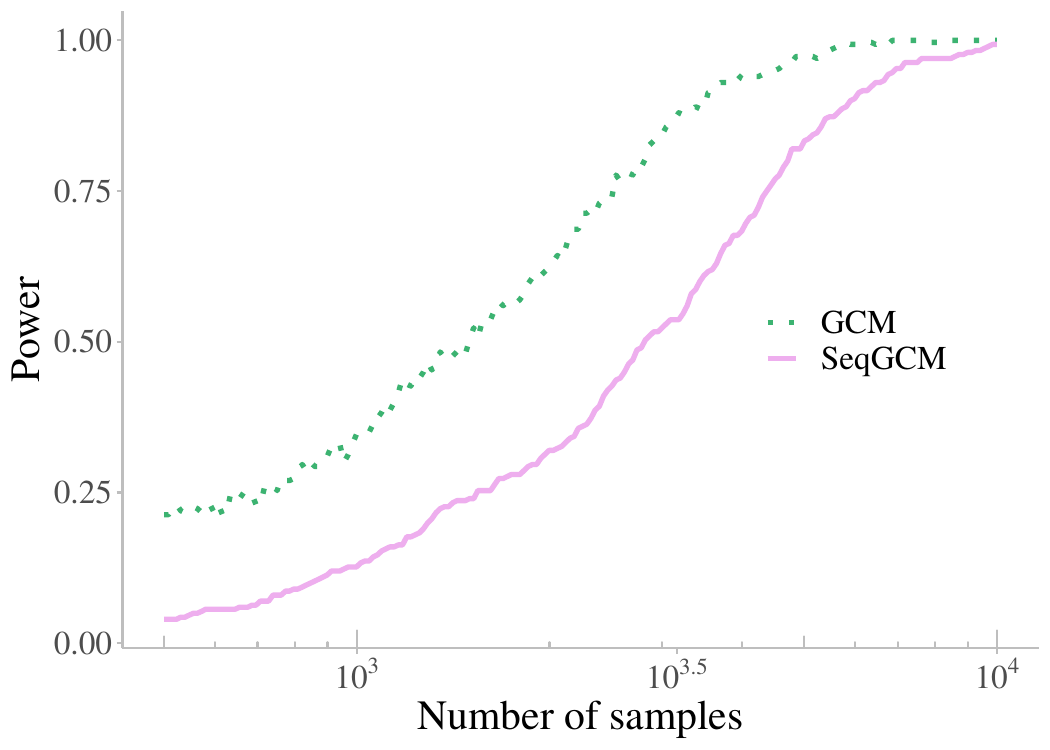}
  \caption{Empirical cumulative type-I error rates and power for the fixed-$n$ GCM test of \citetalias{shah2020hardness} versus the sequential GCM test (SeqGCM) in \cref{theorem:seq-GCM} with a target type-I error of $\alpha = 0.05$ in a simulated conditional independence testing problem.}
  \label{fig:seqcit}
\end{figure}
With Assumptions~\ref{assumption:SeqGCM-product-errors},~\ref{assumption:SeqGCM-regularity}, and~\ref{assumption:GCM-finite-moments} in mind, we are ready to state the $\Pcal_0$-uniform type-I error guarantees of the SeqGCM test.
\begin{theorem}[$\Pcal_0$-uniform type-I error control of the SeqGCM]\label{theorem:seq-GCM}
  Suppose $(X_i, Y_i, Z_i)_{i=1}^\infty$ are $\RR \times \RR \times \RR^d$-valued triplets and let $\Pcal_0 \subseteq \Pcal$ the subset of distributions in $\Pcal$ satisfying the conditional independence null $H_0$ and \namecref{assumption:SeqGCM-regularity}~\ref{assumption:SeqGCM-product-errors},~\ref{assumption:SeqGCM-regularity}, and~\ref{assumption:GCM-finite-moments}.
  Define
  \begin{equation}
    \widebar p_\km^\GCM := 1-\Psi \left (k(\GCMWS_k)^2 - \log(k / m) \right ).
  \end{equation}
  Then $\infseqkm{\widebar p_\km^\GCM}$ forms a $\Pnull$-uniform anytime $p$-value for the conditional independence null:
  \begin{equation}
    \lim_{m \to \infty} \sup_{\P \in \Pcal_0} \sup_{\alpha \in (0, 1)} \left \lvert \P \left ( \exists k \geq m : \widebar p_\km^\GCM \leq \alpha \right ) - \alpha \right \rvert = 0.
  \end{equation}
\end{theorem}

The proof can be found in \cref{proof:seq-GCM} and uses the results from the previous sections combined with the distribution-uniform strong laws of large numbers (SLLNs) for independent but non-identically distributed random variables due to \citet*[Theorem 2]{waudby2024distribution}. The latter is crucial to analyzing the (uniform) almost sure convergence properties of sample averages with online regression estimators under weak consistency assumptions (\ref{assumption:SeqGCM-product-errors} and \ref{assumption:SeqGCM-regularity}).

The left-hand side plot of \cref{fig:seqcit} demonstrates how the SeqGCM test controls the type-I error rate under the null uniformly over time while the standard GCM test fails to. The right-hand side plot compares their empirical power under one alternative. Notice that in the left-hand side plot, the type-I error rate for the GCM starts at around $\alpha = 0.05$ but steadily grows as more samples are collected. By contrast, the SeqGCM test remains below $\alpha = 0.05$ for all $k \geq m = 300$. In the right-hand side plot, we see that the power of the GCM test is higher than that of SeqGCM. This is unsurprising given that SeqGCM has a stronger (time-uniform) type-I error guarantee, but both have power near 1 after 10,000 samples.

To give some intuition as to when \cref{assumption:SeqGCM-product-errors} may be satisfied, suppose that $\mu^x$ and $\mu^y$ are $d$-dimensional and H\"older $s$-smooth \citep[\S 3.2]{gyorfi2002distribution}. Note that the minimax rate for estimating such functions in the resulting class of distributions $\Pcal(s)$ is given by
\begin{equation}
 \inf_{\widehat \mu_n^x} \sup_{\P \in \Pcal(s)} \EE_\P \| \widehat \mu_n^x - \mu^x \|^2 \asymp n^{-2s/(2s + d)},
\end{equation}
and similarly for $\mu^y$. In particular, if $d < 2s$ so that the dimension is not too large relative to the smoothness, then minimax-optimal local polynomial estimators $\widehat \mu^x_n$ and $\widehat \mu_n^y$ for $\mu^x$ and $\mu^y$ can be constructed and will be $\Pcal(s)$-weakly consistent at rates of $o \left ( (n \log^{2+\delta} n)^{-1/4} \right )$. In this case, \cref{assumption:SeqGCM-product-errors} (and \cref{assumption:GCM-regression-product-errors}) will be satisfied as long as $ \Pnull \subseteq \Pcal(s)$. More broadly, any regression algorithms can be used to construct $\widehat \mu_n^x$ and $\widehat \mu_n^y$ (e.g.~using random forests, neural networks, nearest neighbors, etc.) and they can further be selected via cross-validation or aggregated \citep{breiman1996stacked,tsybakov2003optimal}.

\section{Summary \& discussion}

In this paper, we developed a theory of distribution-uniform anytime-valid inference by relating it to quantile- and distribution-uniform convergence in distribution of suprema of Gaussian processes. This theory easily led to the derivation of methodological insights, resulting in one- and two-sided anytime-valid inference procedures for means of \iid{} random variables, a general problem in statistical inference that is a central element of many more complex settings. As one illustration, we use the outlined methods to derive a sequential test for conditional independence that does not rely on the Model-X assumption, and this seems to be the first of its kind. Zooming out, the results of this paper open vistas for several other settings in anytime-valid inference, such as distribution-uniform methods under martingale dependence, high-dimensional settings, and Berry-Esseen-type bounds. We leave these nontrivial extensions for future work.

\paragraph{Acknowledgments.} IW-S thanks Tudor Manole, Rajen Shah, and Kenta Takatsu for insightful discussions. The authors acknowledge support from NSF grants IIS-2229881 and DMS-2310718.

\bibliographystyle{plainnat}
\bibliography{references.bib}
\newpage
\appendix

\section{Proofs of the main results}

\subsection{Proof of \cref{theorem:consistent-variance-estimation}}\label{proof:consistent-variance-estimation}

\begin{proof}
Fix $\eps > 0$. We will rely on the result of \citet[Corollary 1]{waudby2024distribution} which states that for a fixed $\gamma > 0$, if 
\begin{equation}
  \lim_\mto \sup_\Pin  \EE_\P \left [ |X_1- \EE_\P[X_1]|^{2+\gamma} \1 \{ |X_1 - \EE_\P[X_1]|^{2+\gamma} \geq m \} \right ] = 0, 
\end{equation}
then it holds that
\begin{equation}\label{eq:variance estimation wsetal result}
  \forall s > 0, \quad \lim_\mto \sup_\Pin \P \left ( \supkm \left \{ \frac{|\widehat \sigma_k^2 - \sigma_\P^2|}{k^{2/(2+\gamma)-1}} \right \} \geq s \right ) = 0.
\end{equation}
By the de la Vall\'ee-Poussin criterion of uniform integrability (see \citep{chong1979theorem,hu2011note,chandra2015vallee} or \citet[Appendix A]{waudby2024distribution}), the above holds
with $\gamma = \delta / 2$ under \cref{assumption:upper bound on variance}. Now,
let $\ubar \sigma^2 := \inf_\Pin \Var_\P(X_1)$. Then for any $m \in \NN$ and $\Pin$,
\begin{align}
  \P \left ( \supkm k^{1-\frac{2}{2+\delta/2}} \left ( \frac{\widehat \sigma_k^2}{\sigma_\P^2} - 1 \right ) \geq \eps \right ) &= \P \left ( \supkm \frac{k^{1-\frac{2}{2+\delta/2}}}{\sigma_\P^2} \left ( \widehat \sigma_k^2 - \widehat \sigma_\P^2 \right ) \geq \eps \right )\\
  &\leq \P \left ( \supkm k^{1-\frac{2}{2+\delta/2}} \left ( \widehat \sigma_k^2 - \widehat \sigma_\P^2 \right ) \geq \ubar \sigma^2 \eps \right ).
\end{align}
Taking suprema over $\Pin$ and limits as $\mto$ while relying on \eqref{eq:variance estimation wsetal result}, we have that
\begin{equation}
  \lim_\mto \sup_\Pin\P \left ( \supkm k^{1-\frac{2}{2+\delta/2}} \left ( \frac{\widehat \sigma_k^2}{\sigma_\P^2} - 1 \right ) \geq \eps \right ) = 0,
\end{equation}
which completes the proof.
\end{proof}

\subsection{Proof of \cref{proposition:oPcalas-calculus}}\label{proof:oPcalas-calculus}

\begin{proof}[\proofpreamble{}of \eqref{eq:oPcalas-implies-OPcalas}]
Suppose that $X_n = \oPcalas(a_n)$. We want to show that for any $\delta$, there exists $C \equiv C(\delta)$ and $M \equiv M(\delta)$ so that for all $m \geq M$,
\begin{equation}
  \sup_\Pin \P \left ( \sup_{k \geq m} |a_k^{-1} X_k | \geq C \right ) < \delta.
\end{equation}
This is immediate from the definition of $\oPcalas(a_n)$. Indeed, fix any $\eps > 0$ and choose $M\equiv M(\eps)$ so that for any $m \geq M$,
\begin{equation}
  \sup_\Pin \P \left ( \sup_{k \geq m} |a_k^{-1} X_k | \geq \eps \right ) < \delta.
\end{equation}
Identifying $C$ with $\eps$ completes the proof.
\end{proof}

\begin{proof}[\proofpreamble{}of \eqref{eq:product-is-oPcalas}]
Suppose that $X_n = A_n B_n$ with $A_n = \oPcalas(a_n)$ and $B_n = \oPcalas(b_n)$. We want to show that $a_n^{-1}b_n^{-1} X_n = \oPcalas(1)$. More formally, our goal is to show that for arbitrary $\eps, \delta > 0$, there exists $M\equiv M(\eps, \delta) \geq 1$ so that for all $m \geq M$,
\begin{equation}\label{eq:proof-product-of-oPcalas-goal}
  \sup_\Pin \P \left ( \exists k \geq m : |a_k^{-1} b_k^{-1} X_k| \geq \eps \right ) < \delta.
\end{equation}
Choose $M$ sufficiently large so that for all $m \geq M$,
\begin{equation}
  \sup_\Pin \P \left ( \sup_{k \geq m} |a_k^{-1}A_k| \geq \sqrt{\eps/2} \right ) < \delta\quad\text{and}\quad\sup_\Pin \P \left ( \sup_{k \geq m} |b_k^{-1}B_k| \geq \sqrt{\eps/2} \right ) < \delta.
\end{equation}
Then, writing out the equation in \eqref{eq:proof-product-of-oPcalas-goal}, we have that
\begin{align}
  &\sup_\Pin \P \left ( \exists k \geq m : |a_k^{-1} b_k^{-1} X_k| \geq \eps \right ) \\
  \leq\ &\sup_\Pin \P \left ( \exists k \geq m : |a_k^{-1}A_k| |b_k^{-1} B_k| \geq \eps \right )\\
  \leq\ &\sup_\Pin \P \left ( \exists k \geq m : |a_k^{-1}A_k| |b_k^{-1} B_k| \geq \eps \bigm \vert \sup_{k \geq m}|a_k^{-1}A_k| < \sqrt{\eps / 2}\text{ and } \sup_{k \geq m}|b_k^{-1}B_k| < \sqrt{\eps / 2} \right ) + \\
  &\sup_\Pin \P \left ( \sup_{k \geq m}|a_k^{-1}A_k| < \sqrt{\eps/2}\text{ and } \sup_{k \geq m}|b_k^{-1}B_k| < \sqrt{\eps / 2} \right ) \\
  \leq\ &\underbrace{\sup_\Pin \P \left ( \exists k \geq m : \eps / 2 \geq \eps \right )}_{= 0} + \\
  &\underbrace{\max \left \{\sup_\Pin \P \left ( \sup_{k \geq m}|a_k^{-1}A_k| < \sqrt{\eps/2} \right ),\ \P \left (\sup_{k \geq m}|b_k^{-1}B_k| < \sqrt{\eps / 2} \right )  \right \}}_{< \delta}\\
  <\ &\delta,
\end{align}
which completes the proof.
\end{proof}

\begin{proof}[\proofpreamble{}of \eqref{eq:product-is-OPcalas}]
 Suppose that $X_n = A_n B_n$ with $A_n = \OPcalas(a_n)$ and $B_n = \OPcalas(b_n)$. Our goal is to show that for any $\delta > 0$, there exists some $C \equiv C(\delta)$ and $M\equiv M(C, \delta)$ so that
\begin{equation}
  \sup_\Pin \P \left ( \sup_{k \geq m} |a_n^{-1}b_n^{-1}X_n| > C \right ) < \delta.
\end{equation}
  Fix $\delta > 0$. Let $C_a, M_a, C_b, M_b$ be sufficiently large so that for all $m \geq \max\{M_a, M_b\}$,
  \begin{equation}
    \sup_\Pin \P \left ( \sup_{k \geq m} |a_k^{-1} A_k| \geq M_a \right ) < \delta\quad\text{and}\quad\sup_\Pin \P \left ( \sup_{k \geq m} |b_k^{-1} B_k| \geq M_b \right ) < \delta.
  \end{equation}
  Now, set $C = C_a C_b + 1$. Then,
  \begin{align}
    &\sup_\Pin \P \left ( \sup_{k \geq m} |a_k^{-1}b_k^{-1}X_k| \geq C \right ) \\
    \leq\ &\sup_\Pin \P \left ( \sup_{k \geq m} |a_k^{-1}A_k | |b_k^{-1}B_k| \geq C \right )\\
    \leq\ &\sup_\Pin \P \left ( \sup_{k \geq m} C_a C_b \geq C \right ) + \sup_\Pin \P \left ( \sup_{k \geq m} |a_k^{-1} A_k | > C_a\text{ and } |b_k^{-1}B_k| > C_b\right )\\
    \leq\ &\underbrace{\sup_\Pin \P \left ( \sup_{k \geq m} C_a C_b \geq C_a C_b +1 \right )}_{= 0} +\\
    &\underbrace{\max\left \{ \sup_\Pin \P \left ( \sup_{k \geq m} |a_k^{-1} A_k | > C_a \right ),\ \sup_\Pin \P \left ( \sup_{k\geq m}|b_k^{-1}B_k| > C_b\right ) \right \}}_{ < \delta},
  \end{align}
  which completes the proof.
\end{proof}

\begin{proof}[\proofpreamble{}of \eqref{eq:sum-is-OPcalas}]
Suppose $X_n = A_n + A_n'$ with both $A_n = \oPcalas(a_n)$ and $A_n' = \OPcalas(a_n)$. The goal is to show that for every $\delta > 0$, there exists $C > 0$ and $M \geq 1$ so that for all $m \geq M$,
\begin{equation}
  \sup_\Pin \P \left ( \sup_{k\geq m} a_k^{-1} |X_k| > C \right ) < \delta.
\end{equation}
  Fix $\delta > 0$. Let $C'$ and $M'$ be so that $\sup_\Pin \P \left ( \sup_{k\geq m} a_k^{-1} |A_k'| > C' \right ) < \delta/2$. Fix any $\eps \in (0, C')$ and let $M^\star$ be so that $\sup_\Pin \P \left ( \sup_{k \geq m} a_k^{-1} |A_k| \geq \eps \right ) < \delta/2$ for all $m \geq M^\star$. Choose $M > \max\{M', M^\star\}$. Then, for all $m \geq M$,
  \begin{align}
   &\sup_\Pin \P \left ( \sup_{k \geq m} a_k^{-1} |A_k + A_k'| \geq C' \right ) \\
    \leq \ &\sup_\Pin \P \left ( \sup_{k \geq m} a_k^{-1} |A_k| + a_k|A_k'| \geq C' \right )\\
    \leq \ &\sup_\Pin \P \left ( \sup_{k \geq m} a_k^{-1} |A_k| \geq C' \right ) + \sup_\Pin \P \left ( \sup_{k \geq m} a_k^{-1} |A_k'| \geq C' \right )\\
    \leq \ &\sup_\Pin \P \left ( \sup_{k \geq m} a_k^{-1} |A_k| \geq \eps \right ) + \sup_\Pin \P \left ( \sup_{k \geq m} a_k^{-1} |A_k'| \geq C' \right )\\
    < \ &\delta,
  \end{align}
  which completes the proof.
\end{proof}

\begin{proof}[\proofpreamble{}of \eqref{eq:sum-of-oPcalas}]
Suppose $X_n = A_n + B_n$ with $A_n = \oPcalas(a_n)$ and $B_n = \oPcalas(b_n)$. The goal is to show that for every $\eps, \delta > 0$, there exists $M \geq 1$ so that for all $m \geq M$,
\begin{equation}
  \sup_\Pin \P \left ( \sup_{k\geq m} c_k^{-1} |X_k| \geq \eps \right ) < \delta,
\end{equation}
where $c_k = \max\{a_k, b_k\}$.
  Fix $\eps, \delta > 0$. Let $M$ be so that $\sup_\Pin \P \left ( \sup_{k\geq m} a_k |A_k| > \eps \right ) < \delta/2$ and $\sup_\Pin \P \left ( \sup_{k\geq m} b_k |B_k| > \eps \right ) < \delta/2$ for all $m \geq M$. Then, for all $m \geq M$,
  \begin{align}
   &\sup_\Pin \P \left ( \sup_{k \geq m} c_k^{-1} |A_k + B_k| \geq \eps \right ) \\
    \leq \ &\sup_\Pin \P \left ( \sup_{k \geq m} c_k^{-1} |A_k| + c_k^{-1}|B_k| \geq \eps \right )\\
    \leq \ &\sup_\Pin \P \left ( \sup_{k \geq m} a_k^{-1} |A_k| + b_k^{-1}|B_k| \geq \eps \right )\\
    \leq \ &\sup_\Pin \P \left ( \sup_{k \geq m} a_k^{-1} |A_k| \geq \eps \right ) + \sup_\Pin \P \left ( \sup_{k \geq m} b_k^{-1} |B_k| \geq \eps \right )\\
    <\ &\delta,
  \end{align}
  which completes the proof.
\end{proof}

\begin{proof}[\proofpreamble{}of \eqref{eq:sum-of-OPcalas}]
  Fix $\delta > 0$. Let $A_n = \OPcalas(a_n)$ and $B_n = \OPcalas(b_n)$. Let $C$, and $M$ be such that $\sup_\Pin \P \left ( \exists k \geq m : a_k^{-1}| A_k | > C/2 \right ) \leq \delta/2$ and $\P \left ( \exists k \geq m : b_k^{-1} |B_k| > C /2\right ) < \delta/2$ for all $m \geq M$. Let $c_n := \max\{a_n, b_n\}$. Then,
  \begin{align}
    \P \left ( \exists k \geq m: c_k^{-1} |A_k + B_k | > C \right ) &\leq \P \left ( \exists k \geq m: c_k^{-1} |A_k| + c_k^{-1}|B_k | > C \right )\\
                                                                    &\leq\P \left ( \exists k \geq m: c_k^{-1} |A_k| > \frac{C}{2} \right ) + \PP \left ( \exists k \geq m : c_k^{-1}|B_k | > \frac{C}{2} \right ) \\
                                                                    &\leq\P \left ( \exists k \geq m: a_k^{-1} |A_k| > \frac{C}{2} \right ) + \P \left ( \exists k \geq m : b_k^{-1}|B_k | > \frac{C}{2} \right ) \\
    &\leq \frac{\delta}{2} + \frac{\delta}{2} = \delta.
  \end{align}
  Since $\delta$ was arbitrary, this completes the proof.
\end{proof}

\begin{proof}[\proofpreamble{}of \eqref{eq:slightly-faster-convergence}]
Let $\eps, \delta > 0$. The goal is to show that there exists $M \equiv M(\eps, \delta) \geq 1$ so that for all $m \geq M$,
\begin{equation}
  \sup_\Pin \P \left ( \sup_{k \geq m} b_k^{-1} |X_k| \geq \eps \right ) < \delta.
\end{equation}
  Let $C > 0$ and $M_1 \geq 1$ be constants so that $\sup_\Pin \P \left ( \sup_{k \geq m} a_k^{-1}|X_k| \geq C \right ) < \delta$ for all $m \geq M_1$. Moreover, choose $M_2 \geq 1$ so that $a_k / b_k < \eps/C$ for all $k \geq M_2$. Set $M := \max\{M_1, M_2\}$. Then, for all $m \geq M$,
  \begin{align}
    &\sup_\Pin \P \left ( \sup_{k \geq m} b_k^{-1} |X_k| \geq \eps \right ) \\
    =\ &\sup_\Pin \P \left ( \exists k \geq m: b_k^{-1} |X_k| \geq \eps \right ) \\
    =\ &\sup_\Pin \P \left ( \exists k \geq m: a_k^{-1} |X_k| \geq (b_k / a_k)\eps \right ) \\
    \leq\ &\sup_\Pin \P \left ( \exists k \geq m: a_k^{-1} |X_k| \geq (C / \cancel{\eps}) \cdot \cancel{\eps} \right ) \\
    =\ &\sup_\Pin \P \left ( \exists k \geq m: a_k^{-1} |X_k| \geq C \right ) \\
    <\ &\delta,
  \end{align}
  which completes the proof.
\end{proof}

\begin{proof}[\proofpreamble{}of \eqref{eq:reciprocal of 1 plus oPcalas}]
  Fix $\eps > 0$ and $\delta > 0$. Choose $\gamma > 0$ sufficiently small so that
  \begin{equation}
    \frac{1}{1-\gamma} \leq 1 + \eps\quad\text{and}\quad \frac{1}{1+\gamma}\geq 1-\eps.
  \end{equation}
  Choose $M$ so that $\forall m \geq M$,
  \begin{equation}
    \sup_\Pin \P \left ( \supkm |X_k| \geq \gamma \right ) \leq \delta.
  \end{equation}
  Therefore, it holds that for all $m \geq M$,
  \begin{align}
    \sup_\Pin \P \left ( \supkm \abs{\frac{1}{1 + X_k}-1} \geq \eps \right ) &\leq \delta,
  \end{align}
  which completes the proof.
\end{proof}

\subsection{Proof of \cref{proposition:properties of psi}}\label{proof:properties of psi}
\begin{proof}
  Starting with the computation of $\psi$, we observe that for any $x \geq 0$,
  \begin{align}
    \psi(x) &:= \frac{\dd \Psi(x)}{\dd x} = \cancel{\frac{\phi(\sqrt{x})}{\sqrt{x}}} - \cancel{\frac{\phi(\sqrt{x})}{\sqrt{x}}} - \cancel{\sqrt{x}} \frac{\phi'(\sqrt{x})}{\cancel{\sqrt{x}}}\\
            &= -\phi'(\sqrt{x}) \\
    &= \sqrt{x} \phi(\sqrt{x}). 
  \end{align}
  It is easy to then check that $\sup_{x \geq 0} |\psi(x)| \leq (2\pi e)^{-1/2}$, and hence $\Psi$ is $(2\pi e)^{-1/2}$-Lipschitz.

  Moving on to the computation of $\psiplus$, observe that for any $x \geq 0$, we have
  \begin{align}
    \psiplus(x) &:= \frac{\dd \Psiplus(x)}{\dd x} \\
                &= \cancel{\frac{\phi(\sqrt{x})}{2 \sqrt{x}}} - \left ( \cancel{\frac{\phi(\sqrt{x}) }{2\sqrt{x}}} + \cancel{\sqrt{x}} \frac{\phi'(\sqrt{x})}{2\cancel{\sqrt{x}}}  \right ) - \left ( \frac{\cancel{2}\phi(\sqrt{x}) \phi'(\sqrt{x})}{\cancel{2} \sqrt{x} \Phi(\sqrt{x})} + \phi(\sqrt{x})^2 \left ( -\Phi(\sqrt{x})^{-2}  \frac{\phi(\sqrt{x})}{2\sqrt{x}} \right )  \right )\\
                &= \frac{\sqrt{x} \phi(\sqrt{x})}{2} + \frac{\phi(\sqrt{x})^2}{\Phi(\sqrt{x})} + \frac{\phi(\sqrt{x})^3 }{2 \Phi(\sqrt{x})^2 \sqrt{x}} \\
                &= \frac{\phi(\sqrt{x})}{2 \sqrt{x} \Phi(\sqrt{x})^2} \left ( x \Phi(\sqrt{x})^2 + 2 \sqrt{x} \phi(\sqrt{x}) \Phi(\sqrt{x}) + \phi(\sqrt{x})^2  \right ) \\
    &= \frac{\phi(\sqrt{x})}{2 \sqrt{x} \Phi(\sqrt{x})^2} \left ( \phi(\sqrt{x}) + \sqrt{x} \Phi(\sqrt{x}) \right )^2.
  \end{align}
  Now fix $a > 0$. We will show that $\psiplus$ is uniformly bounded on $[a, \infty)$. Clearly $\psiplus$ is positive on $[a, \infty)$. Notice that $\psiplus(a)$ is finite and that
  \begin{equation}
    \lim_\xto \psiplus(x) = 0.
  \end{equation}
  By the extreme value theorem, it follows that $\psiplus$ is uniformly bounded and hence $\Psiplus$ is Lipschitz continuous on $[a,\infty)$. This completes the proof.
\end{proof}

\subsection{Proof of \cref{proposition:robbins-siegmund from Gaussians}}\label{proof:robbins-siegmund from Gaussians}

\begin{proof}
  Beginning with $(i)$, we let $x \geq 0$ and note that
  \begin{align}
    \P \left ( \sup_{t \geq 1} \left \{ \frac{W(t)^2}{t} - \log (t) \right \} \geq x \right ) &= \P \left ( \exists t \geq 1 : \frac{W(t)^2}{t} - \log t \geq x^2 \right )\\
                        &= \P \left ( \exists t \geq 1 : |W(t)| \geq \sqrt{t \left [ x^2 + \log t \right ]} \right )\\
    &= 2[1-\Phi(x) + x\phi(x)] \equiv 1-\Psi(x^2),
  \end{align}
  where the last line follows from \citet[Example 3]{robbins1970boundary} but with their value of $\tau$ set to 1 (see also \citep[Lemma A.14]{waudby2021time}). This completes the proof of part $(i)$.

  Moving on to the claim in $(ii)$, we similarly have that for any $x \geq 0$,
  \begin{align}
    \P \left ( \sup_{t \geq 1} \left \{ g^\star \left ( \frac{W(t)}{\sqrt{t}} \right ) - \log (t) \right \} \geq x \right ) &= \P \left ( \exists t \geq 1 : W(t) \geq t^{1/2}  (g^\star)^{-1} \left ( x + \log (t) \right )  \right )\\
    &= 1-\Phi(x) + \phi(x) \left ( x + \frac{\phi(x)}{\Phi(x)} \right ),
  \end{align}
  which follows from the derivation preceding Example 3 in \citet{robbins1970boundary}.

  Moving on to the claims in $(iii)$ and $(iv)$, note that by \citet[Example 3]{robbins1970boundary}, we have that
  \begin{equation}
    \forall x\geq 0, \quad \lim_\mto \P \left ( \supkm \left \{ \frac{G_k^2}{k \sigma^2} - \log \left ( \frac{k}{m} \right )  \right \} \leq x \right ) = \Psi(x)
  \end{equation}
  and
  \begin{equation}
    \forall x \geq 0, \quad \P \left ( \supkm \left \{ g^\star \left ( \frac{G_k}{\sqrt{k \sigma^2}} \right ) - \log \left ( \frac{k}{m} \right ) \right \} \leq x \right ) = \Psiplus(x).
  \end{equation}
  The desired results follow after invoking \cref{lemma:quantile uniformity for free}. This completes the proof.
\end{proof}

\subsection{Proof of \cref{theorem:main-convergence-in-distribution}}\label{proof:main-convergence-in-distribution}

We prove a more general lemma here that yields \cref{theorem:main-convergence-in-distribution} as a corollary by taking $R_k = 0$ for each $k \in \NN$.

\begin{lemma}\label{theorem:generalized convergence}
  Let $\infseqn{X_n}$ be \iid{} random variables satisfying \cref{assumption:upper bound on variance}.
  Let $\widehat \sigma_n^2$ be an estimator of the variance satisfying $\widehat \sigma_n^2 / \sigma^2 - 1 = \oPcalas(1/\log n)$ and let $R_n$ be a term satisfying $R_n = \oPcalas(1)$. Then
  \begin{equation}\label{eq:main-theorem-two-sided-general}
    \lim_\mto \sup_\Pin \sup_{x \geq 0} \left \lvert \P \left ( \supkm \left \{ \frac{S_k^2}{\widehat \sigma_k^2 k} + R_k - \log \left ( \frac{k}{m} \right ) \right \} \geq x \right ) - [ 1 - \Psi(x) ] \right \rvert = 0.
  \end{equation}
  Letting $g(x) = \left ( x^2 + 2 \log \Phi(x) \right ) \lor 0$ for $x \in \RR$, we also have that
  \begin{equation}\label{eq:main-theorem-one-sided-general}
    \lim_\mto \sup_\Pin \sup_{x \geq a_1} \pospart{ \P \left ( \supkm \left \{ g \left ( \frac{S_k \lor 0}{\widehat \sigma_k \sqrt{k}}\right ) + R_k - \log \left ( \frac{k}{m} \right ) \right \} \geq x \right ) - [1-\Psiplus(x)] } = 0,
  \end{equation}
  where $a_1 = \Psiplus^{-1}(1/2)$.
\end{lemma}

We first prove the result for the two-sided case displayed in \eqref{eq:main-theorem-two-sided-general} and then later consider the one-sided case displayed in \eqref{eq:main-theorem-one-sided-general}.

\begin{proof}[\proofpreamble{}for the two-sided case in \eqref{eq:main-theorem-two-sided-general}]
  Let $\eps > 0$ be arbitrary. Using the fact that $\Psi$ is Lipschitz (see \cref{proposition:properties of psi}) and hence uniformly continuous, choose $\delta > 0$ sufficiently small so that for any $y,z \in \RR$ with $|y-z| \leq 3\delta$, it holds that
  \begin{equation}
    |\Psi(y) - \Psi(z)| < \eps.
  \end{equation}
  Moreover, choose $M_1$ sufficiently large so that for all $ m \geq M_1$,
  \begin{equation}
    \sup_\Pin \P \left ( \supkm |R_k| \geq \delta \right ) < \eps.
  \end{equation}

  \paragraph{Step I: Controlling errors induced from estimating the variance}
  Using \cref{theorem:consistent-variance-estimation}, there exists some $\beta > 0$ so that
  \begin{equation}
    \forall s > 0, \quad \lim_\mto \sup_\Pin \P \left ( \supkm k^\beta |\widehat \sigma_k^2 / \sigma_\P^2 - 1| \geq s \right ) = 0,
  \end{equation}
  and hence it holds that $\widehat \sigma_n^2 / \sigma_\P^2 = \oPcalas\left (\log^{-1}(n)\right )$. Indeed, take $M_2$ sufficiently large so that for all $m \geq M_2$,
  \begin{equation}
    \sup_\Pin \P \left ( \supkm \log(k) | \widehat \sigma_k^2 / \sigma_\P^2 - 1| \geq \delta \right ) < \eps.
  \end{equation}
  Writing out the probability of central interest, we have for any $\Pin$, any $x \in \RR$, and any $m \geq \max \{ M_1, M_2 \}$,
  \begin{align}
    \P \left ( \supkm \left \{ \frac{S_k^2}{\widehat \sigma_k^2 k} + R_k - \log \left ( \frac{k}{m} \right ) \right \} \geq x \right ) &< \P \left ( \supkm \left \{ \frac{S_k^2}{\sigma_\P^2 (1 - \delta / \log k) k} - \log \left ( \frac{k}{m} \right ) \right \} \geq x - \delta \right ) + 2\eps\\
    &\leq \P \left ( \supkm \left \{ \frac{S_k^2}{\sigma_\P^2 k} - \log \left ( \frac{k}{m} \right ) \right \} \geq x - 2\delta \right ) + 2\eps.
  \end{align}
  Similarly, we have that
  \begin{align}
    \P \left ( \supkm \left \{ \frac{S_k^2}{\widehat \sigma_k^2 k} + R_k - \log \left ( \frac{k}{m} \right ) \right \} \geq x \right ) &\geq \P \left ( \supkm \left \{ \frac{S_k^2}{\sigma_\P^2 (1 + \delta / \log k) k} - \log \left ( \frac{k}{m} \right ) \right \} \geq x + \delta \right )(1-2\eps) \\
    &\geq \P \left ( \supkm \left \{ \frac{S_k^2}{\sigma_\P^2 k} - \log \left ( \frac{k}{m} \right ) \right \} \geq x + 2\delta \right ) - 2\eps.
  \end{align}

  \paragraph{Step II: Controlling the strong Gaussian approximation error}
  Without loss of generality, let $(\Omega, \Fcal, \Pcal)$ be sufficiently rich so that they can contain Gaussian random variables. Recalling \cref{theorem:strong-gaussian-approx} \citep[Theorem 4]{waudby2025nonasymptotic}, there exists a sequence $\infseqn{Y_n}$ of \iid{} Gaussian random variables with mean zero and variance $\Var_\P(X_1)$ so that once we put $S_n := \sum_{i=1}^n X_i$ and $G_n := \sum_{i=1}^n Y_i$, it holds that 
  \begin{align}
    \forall s > 0,\quad \lim_\mto \sup_\Pin \P \left ( \supkm \left \lvert \frac{S_k - G_k}{k^{1/q}} \right \rvert \geq s \right )  = 0,
  \end{align}
  or in other words, $S_n = G_n + \oPcalas(n^{1/q})$.
  Appealing to \cref{proposition:oPcalas-calculus}, we have that
  \begin{equation}
    S_n^2 = \left ( G_n + \oPcalas(n^{1/q}) \right )^2 = G_n^2 + \oPcalas(n^{2/r})
  \end{equation}
  for some $r > 2$, and hence $S_n^2 = G_n^2 + \oPcalas(n)$.
  As such, take $M_3$ sufficiently large so that for all $m \geq M_3$,
  \begin{equation}
    \sup_\Pin \P \left ( \supkm \left \lvert \frac{S_k^2 - G_k^2}{k} \right \rvert \geq \ubar \sigma^2 \delta \right ) < \eps.
  \end{equation}
  It then follows that for any $m \geq M_3$, any $\Pin$, and $x\in\RR$,
  \begin{align}
    \P \left ( \supkm \left \{ \frac{S_k^2}{\sigma_\P^2 k} - \log \left ( \frac{k}{m} \right ) \right \} \geq x \right ) &< \P \left ( \exists k \geq m :  \frac{G_k^2}{\sigma_\P^2 k} + \frac{\ubar \sigma^2 \delta}{\sigma_\P^2} - \log \left ( \frac{k}{m} \right ) \geq x \right ) + \eps\\
    &\leq \P \left ( \exists k \geq m :  \frac{G_k^2}{\sigma_\P^2 k} - \log \left ( \frac{k}{m} \right ) \geq x - \delta \right ) + \eps.
  \end{align}
  Similarly, it holds that for any $m\geq M_3$, $\Pin$, and $x \in \RR$,
  \begin{align}
    \P \left ( \supkm \left \{ \frac{S_k^2}{\sigma_\P^2 k} - \log \left ( \frac{k}{m} \right ) \right \} \geq x \right ) &> \P \left ( \exists k \geq m :  \frac{G_k^2}{\sigma_\P^2 k} - \frac{\ubar \sigma^2 \delta}{\sigma_\P^2} - \log \left ( \frac{k}{m} \right ) \geq x \right )(1-\eps)\\
    &\geq \P \left ( \exists k \geq m :  \frac{G_k^2}{\sigma_\P^2 k} - \log \left ( \frac{k}{m} \right ) \geq x + \delta \right ) - \eps.
  \end{align}
  With Step I in mind, we have that for any $m \geq \max \{M_1, M_2, M_3\}$, any $\Pin$, and any $x \in \RR$,
  \begin{equation}
    \P \left ( \supkm \left \{ \frac{S_k^2}{\widehat \sigma_k^2k} - \log \left ( \frac{k}{m} \right ) \right \} \geq x \right ) < \P \left ( \supkm \left \{ \frac{G_k^2}{\sigma_\P^2 k} - \log \left ( \frac{k}{m} \right ) \right \} \geq x - 3\delta \right ) + 3\eps
  \end{equation}
  and 
  \begin{equation}
    \P \left ( \supkm \left \{ \frac{S_k^2}{\widehat \sigma_k^2k} - \log \left ( \frac{k}{m} \right ) \right \} \geq x \right ) > \P \left ( \supkm \left \{ \frac{G_k^2}{\sigma_\P^2 k} - \log \left ( \frac{k}{m} \right ) \right \} \geq x + 3\delta \right ) - 3\eps.
  \end{equation}

  \paragraph{Step III: Appealing to quantile-uniform convergence of transformed Gaussians}
  By \cref{proposition:robbins-siegmund from Gaussians}, take $M_4$ sufficiently large so that for all $m \geq M_4$, it holds that
  \begin{equation}
    \sup_{x \in \RR} \left \lvert \P \left ( \supkm \left \{ \frac{G_k^2}{\sigma_\P^2 k} - \log \left ( \frac{k}{m} \right ) \right \} \geq x \right ) - [1-\Psi(x)] \right \rvert < \eps.
  \end{equation}
  Further appealing to uniform continuity of $\Psi$ (\cref{proposition:properties of psi}) and the definition of $\delta$, we have for all $m \geq M_4$,
  \begin{equation}
   \P \left ( \supkm \left \{ \frac{G_k^2}{\sigma_\P^2 k} - \log \left ( \frac{k}{m} \right ) \right \} \geq x -3\delta \right ) - [1-\Psi(x)] < 2\eps,
  \end{equation}
  and through a similar argument,
  \begin{equation}
    \P \left ( \supkm \left \{ \frac{G_k^2}{\sigma_\P^2 k} - \log \left ( \frac{k}{m} \right ) \right \} \geq x -3\delta \right ) - [1-\Psi(x)] > -2\eps
  \end{equation}

  \paragraph{Step IV: Combining the previous steps}
  It now holds that for any $m \geq \max\{M_1,M_2,M_3, M_4\}$, any $\Pin$, and any $x \in \RR$, 
  \begin{align}
    &\P \left ( \supkm \left \{ \frac{S_k^2}{\widehat \sigma_k^2k} + R_k - \log \left ( \frac{k}{m} \right ) \right \} \geq x \right ) - [1-\Psi(x)] \\
    <\ &\P \left ( \supkm \left \{ \frac{S_k^2}{\sigma_\P^2 k} - \log \left ( \frac{k}{m} \right ) \right \} \geq x-2\delta \right ) - [1-\Psi(x)] + 2\eps \\
                                                                                                                                <\ &\P \left ( \supkm \left \{ \frac{G_k^2}{\sigma_\P^2 k} - \log \left ( \frac{k}{m} \right ) \right \} \geq x-3\delta \right ) - [1-\Psi(x)] + 3\eps \\
                                          <\ & 5\eps,
  \end{align}
  where the first, second, and third inequalities follow from Steps I, II, and III respectively. Similarly, we have for any $m \geq \max\{M_1, M_2, M_3, M_4\}$, $\Pin$, and $x \in \RR$,
  \begin{equation}
    \P \left ( \supkm \left \{ \frac{S_k^2}{\widehat \sigma_k^2k} - \log \left ( \frac{k}{m} \right ) \right \} \geq x \right ) - [1-\Psi(x)] > -5\eps.
  \end{equation}
  Since $\eps$ was arbitrary, we have the desired result:
  \begin{equation}
    \lim_\mto \sup_\Pin \sup_{x \in \RR} \left \lvert \P \left ( \supkm \left \{ \frac{S_k^2}{\widehat \sigma_k^2k} - \log \left ( \frac{k}{m} \right ) \right \} \geq x \right ) - [1-\Psi(x)] \right \rvert = 0,
  \end{equation}
  which completes the proof.
\end{proof}

\begin{proof}[\proofpreamble{}for the one-sided case in \eqref{eq:main-theorem-one-sided-general}]
  The proof proceeds similarly to that of the two-sided case but with some additional arguments due to some complications arising from the presence of the function $g$. Defining $g^\star(x) = x^2 + 2 \log \Phi(x)$ for $x \in \RR$, let $a_0 := (g^\star)^{-1}(0) \approx 0.7286$, and recall that $g(x) = g^\star(x) \lor 0$.
  Moreover, let $a_1 := \Psiplus^{-1}(0.5)\approx 0.92575$. In particular, we note that $\Delta := a_1 - a_0 > 0.1$, and we will use this fact later on. 

  Let $\eps > 0$ be arbitrary. By \cref{proposition:properties of psi}, we have that $\Psiplus$ is uniformly continuous on $[a_0, \infty)$. As such, choose $\delta \in (0, \Delta/3)$ sufficiently small so that for any $y,z \geq a_0$ with $|y-z| \leq 3\delta$, it holds that
  \begin{equation}
    \left \lvert \Psiplus(y) - \Psiplus(z) \right \rvert < \eps.
  \end{equation}
  Further appealing to uniform continuity of $g(\sqrt{\cdot })$ on $[a_0, \infty)$ (see \cref{lemma:Lipschitzness of gsqrt}), choose $\gamma > 0$ so that for any $y,z \geq a_0$ such that $|y-z| \leq \gamma$, it holds that
  \begin{equation}
    |g(\sqrt{y}) - g(\sqrt{z})| < \delta.
  \end{equation}
  Take $M_1$ large enough so that
  \begin{equation}
    \sup_\Pin \P \left ( \supkm |R_k| \geq \delta \right ) \leq \eps.
  \end{equation}

  \paragraph{Step I: Handling errors incurred from estimating variances}
  Similar to Step I of the proof of the two-sided case, we have by \cref{theorem:consistent-variance-estimation} that $\widehat \sigma_n^2 / \sigma_\P^2 - 1 = \oPcalas\left (\log^{-1}(n)\right )$ and hence we take
  $M_2$ sufficiently large so that for all $m \geq M_2$,
  \begin{equation}
    \sup_\Pin \P \left ( \supkm \log (k) |\widehat \sigma_k^2 / \sigma_\P^2 - 1| \geq \delta \right ) < \eps.
  \end{equation}
  Using monotonicity of $g$ and multiplicative ``continuity'' of $g$ (see \cref{lemma:g-properties}), we have that for all $m \geq \max \{M_1, M_2\}$, all $x \geq 0$, and all $\Pin$, 
  \begin{align}
    &\P \left ( \supkm \left \{ g \left ( \frac{S_k \lor 0}{\widehat \sigma_k \sqrt{k}} \right ) + R_k - \log \left ( \frac{k}{m} \right ) \right \} \geq x \right )\\
    \leq\ &\P \left ( \exists k \geq m :  g \left ( \frac{(S_k \lor 0) / \sqrt{\sigma_\P^2 k }}{ \sqrt{\left (1 - \delta / \log(k) \right )}} \right ) - \log \left ( \frac{k}{m} \right ) \geq x - \delta \right ) + \eps + \P \left ( \supkm \log(k)|\widehat \sigma_k / \sigma_\P - 1 | \geq \delta \right ) \\
    \leq\ &\P \left ( \exists k \geq m :   \frac{g\left((S_k \lor 0) / \sqrt{\sigma_\P^2 k}\right )}{1-\delta / \log(k)} - \log \left ( \frac{k}{m} \right ) \geq x - \delta \right ) + 2\eps \\
    \leq\ &\P \left ( \exists k \geq m : g \left ( \frac{S_k \lor 0}{\sigma_\P \sqrt{k}} \right ) - \log \left ( \frac{k}{m} \right )  \geq x - 2\delta \right ) + 2\eps.
  \end{align}
  Similarly, it holds that
  \begin{align}
    \P \left ( \supkm \left \{ g \left ( \frac{S_k \lor 0}{\widehat \sigma_k \sqrt{k}} \right ) + R_k - \log \left ( \frac{k}{m} \right ) \right \} \geq x \right ) &\geq \P \left ( \exists k \geq m : g \left ( \frac{S_k \lor 0}{\sigma_\P \sqrt{k}}  \right ) - \log \left ( \frac{k}{m} \right ) \geq x + 2\delta \right ) - 2\eps.
  \end{align}

  \paragraph{Step II: Controlling the strong Gaussian approximation error}
  Using \cref{theorem:strong-gaussian-approx} \citep[Theorem 4]{waudby2025nonasymptotic}, there exist \iid{} Gaussian random variables $\infseqn{Y_n}$ with mean zero and variance $\Var_\P(X_1)$ so that once we put $G_n := \sum_{i=1}^n Y_i$ for any $n \in \NN$, 
  \begin{align}
    \forall s > 0,\quad \lim_\mto \sup_\Pin \P \left ( \supkm \left \lvert \frac{S_k - G_k}{k^{1/q}} \right \rvert \geq s \right )  = 0,
  \end{align}
  and hence
  \begin{equation}
    S_n^2 = \left ( G_n + \oPcalas(n^{1/q}) \right )^2 = G_n^2 + \oPcalas(n^{r/2})
  \end{equation}
  for some $r > 2$.
  As such, take $M_3$ sufficiently large so that for all $m \geq M_3$,
  \begin{equation}
    \sup_\Pin \P \left ( \supkm \frac{|(S_k \lor 0)^2 - (G_k \lor 0)^2|}{k} \geq \ubar \sigma^2 \gamma \right ) < \eps.
  \end{equation}
  Appealing to monotonicity and uniform continuity of $g$, as well as the definition of $\gamma$, it holds that
  \begin{align}
    &\P \left ( \supkm \left \{  g \left ( \frac{S_k \lor 0}{\sigma_\P \sqrt{k}} \right ) - \log \left ( \frac{k}{m} \right )  \right \} \geq x - 2\delta \right )\\
    \leq\ &\P \left ( \exists k \geq m :  g \left ( \sqrt{\frac{(G_k \lor 0)^2}{\sigma_\P^2 k} + \frac{\cancel{k} \ubar \sigma^2 \gamma}{\sigma_\P^2 \cancel{k}}} \right ) - \log \left ( \frac{k}{m} \right ) \geq x - 2\delta \right ) + \eps \\
    \leq\  &\P \left ( \supkm \left \{ g \left ( \frac{G_k \lor 0}{\sigma_\P \sqrt{k}} \right ) - \log \left ( \frac{k}{m} \right ) \right \} \geq x - 3\delta \right ) + \eps.
  \end{align}
  Through a similar argument, it holds that
  \begin{align}
    &\P \left ( \supkm \left \{ g \left ( \frac{S_k \lor 0}{\sigma_\P \sqrt{k}}\right ) - \log \left ( \frac{k}{m}  \right ) \right \} \geq x+2\delta \right ) \geq \P \left ( \supkm \left \{ g \left ( \frac{G_k \lor 0}{\sigma_\P \sqrt{k}} \right )- \log \left ( \frac{k}{m}  \right ) \right \} \geq x+ 3\delta \right ) - \eps.
  \end{align}

  \paragraph{Step III: Appealing to quantile-uniform convergence to the Robbins-Siegmund distribution}
  Using \cref{proposition:robbins-siegmund from Gaussians}, take $M_4$ large enough so that for all $m \geq M_4$,
 \begin{equation}
   \sup_\Pin \sup_{x \geq 0}\left\lvert \P \left ( \supkm \left \{ g \left ( \frac{G_k \lor 0}{\sigma_\P \sqrt{k}} \right ) - \log \left ( \frac{k}{m} \right ) \right \} \geq x \right ) -[1-\Psiplus(x)] \right \rvert < \eps.
 \end{equation} 
 Further appealing to uniform continuity of $\Psiplus$ (\cref{proposition:properties of psi}) and recalling that $\delta \in (0, \Delta/3)$ was constructed so that $|\Psiplus(y) - \Psiplus(z)| < \eps$ whenever $|y-z| \leq 3\delta$, we note that for any $m\geq M_4$, $\Pin$, and any $x \geq a_1$,
 \begin{align}
   &\P \left ( \supkm \left \{ g \left ( \frac{(G_k \lor 0)^2}{\sigma_\P^2 k} \right ) - \log \left ( \frac{k}{m} \right ) \right \} \geq x - 2\delta \right ) -[1-\Psiplus(x)]\\
   <\ &\eps +   [1-\Psiplus(x)] - [1-\Psiplus(x-2\delta)]  \\
   <\ &2\eps.
 \end{align}
 Similarly, we have for any $\Pin$ and any $x \geq a_1$,
 \begin{equation}
   \P \left ( \supkm \left \{ g \left ( \frac{(G_k \lor 0)^2}{\sigma_\P^2 k} \right ) - \log \left ( \frac{k}{m} \right ) \right \} \geq x + 2\delta \right ) -[1-\Psiplus(x)] > -2\eps.
 \end{equation}

 \paragraph{Step IV: Putting the previous steps together}
 It holds that for all $m \geq \max \{ M_1, M_2, M_3, M_4 \}$, all $\Pin$, and all $x \geq a_1$,
 \begin{align}
   & \P \left ( \supkm \left \{ g \left ( \frac{S_k \lor 0}{\widehat \sigma_k \sqrt{k}} \right ) + R_k - \log \left ( \frac{k}{m} \right ) \right \} \geq x \right ) -[1-\Psiplus(x)] \\
   <\ & \P \left ( \supkm \left \{ g \left ( \frac{S_k \lor 0}{\sigma_\P \sqrt{k}} \right ) - \log \left ( \frac{k}{m} \right ) \right \} \geq x - \delta \right ) -[1-\Psiplus(x)]  + \eps\\
   <\ & \P \left ( \supkm \left \{ g \left ( \frac{G_k \lor 0}{\widehat \sigma_k \sqrt{k}} \right ) - \log \left ( \frac{k}{m} \right ) \right \} \geq x - 2\delta \right ) -[1-\Psiplus(x)]  + 2\eps\\
   <\ &4\eps,
 \end{align}
 where the  first, second, and third inequalities follow from Steps I, II, and III, respectively. Similarly, it holds that
 \begin{equation}
   \P \left ( \supkm \left \{ g \left ( \frac{S_k \lor 0}{\widehat \sigma_k \sqrt{k}} \right ) - \log \left ( \frac{k}{m} \right ) \right \} \geq x \right ) -[1-\Psiplus(x)] > - 4\eps.
 \end{equation}
 Since $\eps$ was arbitrary, the desired result follows:
 \begin{equation}
   \lim_\mto \sup_\Pin \sup_{x\geq a_1} \left \lvert \P \left ( \supkm \left \{ g \left ( \frac{S_k \lor 0}{\widehat \sigma_k \sqrt{k}} \right ) - \log \left ( \frac{k}{m} \right ) \right \} \geq x \right ) -[1-\Psiplus(x)] \right \rvert = 0,
 \end{equation}
 completing the proof.
\end{proof}

\subsection{Proof of \cref{theorem:two-sided inference}}\label{proof:two-sided inference}
\begin{proof}[\proofpreamble{}of \cref{theorem:two-sided inference}]
  Starting with the claim that $\infseqm{\infseqkm{\widebar p_k^\brackm}}$ is a $\Pcal_0$-uniform anytime $p$-value, fix $\Pnullin$, $m \in \NN$, and $\alpha \in (0, 1)$. We have
  \begin{align}
    \P \left ( \exists k \geq m : \widebar p_k^\brackm \leq \alpha \right ) &= \P \left ( \exists k \geq m :  \Psi(k\widehat \mu_k^2 / \widehat \sigma_k^2 - \log(k/m)) \geq 1- \alpha \right )\\
                                                                            &= \P \left ( \supkm \left \{  k\widehat \mu_k^2 / \widehat \sigma_k^2 - \log(k/m) \right \} \geq \Psi^{-1}(1- \alpha) \right ).
  \end{align}
  Using the fact that $\Psi$ is invertible between $[0, \infty)$ and $[0,1)$, we have 
  \begin{align}
    \sup_{\alpha \in (0, 1)}\left \lvert \P \left ( \exists k \geq m : \widebar p_k^\brackm \leq \alpha \right ) - \alpha \right \rvert &= \sup_{\alpha \in (0, 1)}\left \lvert \P \left ( \supkm \left \{  k\widehat \mu_k^2 / \widehat \sigma_k^2 - \log(k/m) \right \} \geq \Psi^{-1}(1- \alpha) \right ) - \alpha \right \rvert\\
    &= \sup_{x \geq 0}\left \lvert \P \left ( \supkm \left \{  k\widehat \mu_k^2 / \widehat \sigma_k^2 - \log(k/m) \right \} \geq x \right ) - [1-\Psi(x)] \right \rvert.
  \end{align}
  Applying \cref{theorem:main-convergence-in-distribution} yields
  \begin{equation}
    \lim_\mto \sup_\Pnullin \sup_{\alpha \in (0, 1)} \left \lvert \P \left ( \exists k \geq m : \widebar p_k^\brackm \leq \alpha \right ) - \alpha \right \rvert= 0,
  \end{equation}
  completing the proof for the first claim.

  The claim that $\infseqm{\infseqkm{\widebar C_k^\brackm}}$ is a $\Pcal$-uniform $(1-\alpha)$-\cs{} follows similarly. Indeed, observe that for any $\Pin$, $m \in \NN$, and any $\alpha \in (0, 1)$, the miscoverage event can be re-written as
  \begin{align}
    &\left \{ \omega \in \Omega : \exists k \geq m \text{ such that } \EE_\P[X_1] \notin \left [ \widehat \mu_k \pm \widehat \sigma_k \sqrt{[\Psi^{-1} (1-\alpha) + \log (k /m )] /k} \right ] \right \}\\
    =\ & \left \{ \omega \in \Omega : \exists k \geq m \text{ such that } \left \lvert \widehat \mu_k - \EE_\P[X_1] \right \rvert \geq \widehat \sigma_k \sqrt{[\Psi^{-1} (1-\alpha) + \log (k / m) ] / k} \right \}\\
    =\ &\left \{ \omega \in \Omega : \supkm \left \{ \frac{k (\widehat \mu_k - \EE_\P[X_1])^2}{\widehat \sigma_k^2} - \log( k / m ) \right \} \geq \Psi^{-1}(1-\alpha) \right \}.
  \end{align}
  The result follows from the same line of reasoning as in the case of the anytime $p$-value. This completes the proof of \cref{theorem:two-sided inference}.
\end{proof}

\subsection{Proof of \cref{theorem:one-sided inference}}\label{proof:one-sided inference}

\begin{proof}[\proofpreamble{}of \cref{theorem:one-sided inference}]
  Fix $\P \in \Pcal_0^\leq$, $m \in \NN$, and $\alpha \in (0, 1)$. We have
  \begin{align}
    \P \left ( \exists k \geq m : \widebar p_k^\brackm \leq \alpha \right ) &= \P \left ( \exists k \geq m :  \Psiplus \left ( g \left ( \sqrt{k(\widehat \mu_k\lor 0)^2} / \widehat \sigma_k \right ) - \log(k/m) \right ) \geq 1- \alpha \right )\\
                                                                            &= \P \left ( \exists k \geq m :  g \left ( \sqrt{k(\widehat \mu_k\lor 0)^2} / \widehat \sigma_k \right ) - \log(k/m) \geq \Psiplus^{-1}(1- \alpha) \right ).
  \end{align}
  Using the fact that $\Psiplus$ is invertible on the interval $[\Psiplus(0),1)$, we have
  \begin{align}
    \sup_{\alpha \in (0, 1/2]}\left \lvert \P \left ( \exists k \geq m : \widebar p_k^\brackm \leq \alpha \right ) - \alpha \right \rvert &= \sup_{\alpha \in (0, 1/2]}\left \lvert \P \left ( \supkm \left \{  g \left ( \frac{\sqrt{k} (\widehat \mu_k \lor 0)}{\widehat \sigma_k} \right ) - \log \left ( \frac{k}{m} \right ) \right \} \geq \Psi^{-1}(1- \alpha) \right ) - \alpha \right \rvert\\
&= \sup_{x \geq a_1}\left \lvert \P \left ( \supkm \left \{  g \left ( \frac{\sqrt{k} (\widehat \mu_k \lor 0)}{\widehat \sigma_k} \right ) - \log \left ( \frac{k}{m} \right ) \right \} \geq x \right ) - [1-\Psiplus(x)] \right \rvert,
  \end{align}
  where we recall that $a_1 = \Psiplus^{-1}(0.5)$.
  Applying \cref{theorem:main-convergence-in-distribution} yields the desired result:
  \begin{equation}
    \lim_\mto \sup_{\P \in \Pcal_0^\leq}\sup_{\alpha \in (0, 1/2]}\left \lvert \P \left ( \exists k \geq m : \widebar p_k^\brackm \leq \alpha \right ) - \alpha \right \rvert = 0.
  \end{equation}
  which completes the proof.
\end{proof}

\subsection{Proof of \cref{theorem:hardness}}\label{proof:hardness}

\begin{proof}
  Suppose for the sake of contradiction that there exists a potentially randomized test $(\widebar \Gamma_k^\brackm)_{k = m}^\infty$ so that for some $\alpha \in (0, 1)$, we have both
  \begin{equation}\label{eq:hardness-type-i-err}
    \limsup_{m \to \infty} \sup_{\P \in \Pcal_0^\star} \P \left ( \exists k \geq m : \widebar \Gamma_k^\brackm = 1 \right ) \leq \alpha
  \end{equation}
  and
  \begin{equation}
    \sup_{\P \in \Pcal_1^\star} \limsup_{m \to \infty} \P \left ( \exists k \geq m : \widebar \Gamma_k^\brackm = 1 \right ) > \alpha.
  \end{equation}
  Then there must exist $\eps > 0$ so that we can always find $m_1$ arbitrarily large and nevertheless satisfy
  \begin{equation}\label{eq:hardness-power}
    \sup_{\P \in \Pcal_1^\star}\P \left ( \exists k \geq m_1 : \widebar \Gamma_k^{(m_1)} = 1 \right ) > \alpha + \eps.
  \end{equation}
  Furthermore, by \eqref{eq:hardness-type-i-err}, there exists $m_0 \geq 1$ large enough so that for all $m \geq m_0$,
  \begin{equation}
   \sup_{\P \in \Pcal_0^\star} \P \left ( \exists k \geq m : \widebar \Gamma_k^{(m)} = 1 \right ) < \alpha + \eps.
  \end{equation}
  In particular, choose some $m_1 \geq m_0$ so that \eqref{eq:hardness-power} holds.
  Notice that the events
  \begin{equation}
    A_M := \{ \widebar \Gamma_k^{(m_1)} = 1 \text{ for some } m_1 \leq k \leq M\}
  \end{equation}
  are nested for $M = m_1, m_1+1, \dots$ and that $A_M \to A := \{ \exists k \geq m_1 : \widebar \Gamma_k^{(m_1)} = 1\}$ as $M \to \infty$. Consequently, there must exist some $M^\star$ such that
 \begin{equation}\label{eq:hardness-power-finitetime}
    \sup_{\P \in \Pcal_1^\star}\P \left ( \max_{m_1 \leq k \leq M^\star}\widebar \Gamma_k^{(m_1)} = 1 \right ) > \alpha + \eps.
  \end{equation}
  On the other hand, notice that by virtue of being a $\Pcal_0^\star$-uniform anytime valid test and the fact that $m_1 \geq m_0$, we have that $\max_{m_1 \leq k \leq M^\star}\widebar \Gamma_k^{(m_1)}$ uniformly controls the type-I error under $\Pcal_0^\star$, i.e.
  \begin{equation}
    \sup_{\P \in \Pcal_0^\star} \P \left ( \max_{m_1 \leq k \leq M^\star} \widebar \Gamma_k^{(m_1)} = 1 \right ) \leq \sup_{\P \in \Pcal_0^\star} \P \left ( \exists k \geq m_1 : \widebar \Gamma_k^{(m_1)} = 1 \right ) < \alpha + \eps.
  \end{equation}
  Combining the above with the hardness result of \citet[Theorem 2]{shah2020hardness} applied to the test $\max_{m_1 \leq k \leq M^\star} \widebar \Gamma_k^{(m_1)}$, we have that
  \begin{equation}
    \sup_{\P \in \Pcal_1^\star} \P \left ( \max_{m_1 \leq k \leq M^\star} \widebar \Gamma_k^{(m_1)} = 1 \right ) < \alpha + \eps,
  \end{equation}
  contradicting \eqref{eq:hardness-power-finitetime}, and thus completing the proof of \cref{theorem:hardness}.
\end{proof}

\subsection{Proof of \cref{theorem:seq-GCM}}\label{proof:seq-GCM}

  Before proceeding with the proof, notice that the estimated residual $R_i$ can be written as
  \begin{equation}\label{eq:seqgcm-decomposition}
    R_i = \xi_i + b_{i} + \nu_i
  \end{equation}
  where $\xi_i:= \xi_i^x \cdot \xi_i^y$ is a true product residual with
  \begin{align}
    \xi_i^x := \{X_i - \mu^x(Z_i) \} ~~~\text{and}~~~ \xi_i^y := \{ Y_i - \mu^y (Z_i) \},
  \end{align}
  $b_i$ is a product regression error term given by
  \begin{equation}
    b_{i} := \left \{\widehat \mu_i^x(Z_i) - \mu^x(Z_i) \right \} \left \{ \widehat \mu_i^y(Z_i) - \mu^y(Z_i) \right \},
  \end{equation}
  and $\nu_i := \nu_i^\xy + \nu_i^\yx$ is a cross-term where
  \begin{align}
    \nu_{i}^{x,y}&:= \left \{ \widehat \mu_i^x(Z_i) - \mu^x(Z_i) \right \} \xi_i^y,~~\text{and}\\
    \nu_{i}^{y,x}&:= \left \{ \widehat \mu_i^y(Z_i) - \mu^y(Z_i) \right \} \xi_i^x.
  \end{align}
              Furthermore, define their averages as $\widebar b_n := \frac{1}{n} \sum_{i=1}^n b_i$ and similarly for $\widebar \nu_n^\xy$, $\widebar \nu_n^\yx$, and $\widebar \xi_n$. We may at times omit the argument $(Z_i)$ from $\widehat \mu_{i}^x(Z_i) \equiv \widehat \mu_i^x$ or $\mu^x(Z_i) \equiv \mu^x$ etc.~when it is clear from context.
  With these shorthands in mind, we are ready to prove \cref{theorem:seq-GCM}.
\begin{proof}[\proofpreamble{}of \cref{theorem:seq-GCM}]
  Note that it suffices to show that $\sup_{k \geq m} \left \{ k\GCMWS_k^2 - \log (k / m) \right \}$ converges $\Pnull$-uniformly to the two-sided Robbins-Siegmund distribution as $\mto$.
  Begin by writing $\GCMWS_n$ as
  \begin{align}
    \GCMWS_n &:= \frac{1}{n \widehat \sigma_n} \sum_{i=1}^n R_i\\
    &\equiv \widehat \sigma_n^{-1} \left (\widebar \xi_n + \widebar \nu_n + \widebar b_n \right )
  \end{align}
  and through a direct calculation, notice that our squared GCM statistic can be written as
  \begin{align}
    \GCMWS_n^2 &= \frac{\widebar \xi_n^2 + 2\widebar \xi_n (\widebar \nu_n + \widebar b_n) + (\widebar \nu_n + \widebar b_n)^2}{\widehat \sigma_n^2}\\
    &= \underbrace{\frac{\widebar \xi_n^2}{\widehat \sigma_n^2}}_{(i)} + \underbrace{\frac{2\widebar \xi_n (\widebar \nu_n + \widebar b_n)}{\widehat \sigma_n^2}}_{(ii)} + \underbrace{\frac{(\widebar \nu_n + \widebar b_n)^2}{\widehat \sigma_n^2}}_{(iii)}.
  \end{align}
  In the discussion to follow, we analyze these three terms separately (in Steps 1, 2, and 3, respectively) and combine them to yield the desired result in Step 4.

  \paragraph{Step I: Analyzing the denominator of $(i)$}
  In \cref{lemma:consistent-variance-estimation}, we show that under the assumptions of \cref{theorem:seq-GCM}, the estimator $\widehat \sigma_n^2 := \frac{1}{n} \sum_{i=1}^n R_i^2$ is $\Pnull$-uniformly consistent for $\Var(\xi) \equiv \EE (\xi^2)$ on a multiplicative scale at a rate faster than $1/\log n$, meaning
  \begin{equation}
    \frac{\widehat \sigma_n^2}{\Var(\xi)} -1 = \oPnullas \left ( \frac{1}{\log n}\right ).
  \end{equation}
  We will later use the fact that the denominator of $(i)$ satisfies the above.

  \paragraph{Step II: Analyzing $(ii)$} In Lemmas~\ref{lemma:bias-convergence} and~\ref{lemma:nu-convergence}, we show that under the assumptions of \cref{theorem:seq-GCM}, $\widebar b_n = \oPnullas(1 / \sqrt{n \log \log n})$ and $\widebar \nu_n = \oPnullas(1/\sqrt{n \log \log n})$, respectively. By the uniform law of the iterated logarithm of \citet[Corollary 3.3]{ruf2025concentration}, it holds that
  \begin{equation}
    \widebar \xi_n = \OPnullas(\sqrt{\log \log n / n}).
  \end{equation}
  Combining these four convergence results together with the calculus outlined in \cref{proposition:oPcalas-calculus}, we have
  \begin{align}
    (ii) &= \frac{2\widebar \xi_n \cdot (\widebar \nu_n + \widebar b_n)}{\widehat \sigma_n^2} \\
    &= \frac{\OPnullas(\sqrt{\log \log n / n}) \cdot \oPnullas(1/\sqrt{n\log \log n})}{\ubar \sigma^2 \cdot (1 + \oPnullas(1))}\\
    &= \oPnullas \left ( \frac{1}{n} \right ).
  \end{align}

  \paragraph{Step III: Analyzing $(iii)$} Again by Lemmas~\ref{lemma:bias-convergence} and~\ref{lemma:nu-convergence} and the calculus of \cref{proposition:oPcalas-calculus}, we have that
  \begin{align}
    (iii) &\leq \left \lvert \frac{(\widebar \nu_n + \widebar b_n)^2}{\widehat \sigma_n^2} \right \rvert\\
          &= \left \lvert \frac{\oPnullas(1/n)}{\ubar \sigma^2 \cdot (1 + \oPnullas(1))} \right \rvert \\
    &= \oPnullas \left ( \frac{1}{n} \right ).
  \end{align}

  \paragraph{Step IV: Putting $(i)$--$(iii)$ together} Recalling that $\GCMWS^2_n = (i) + (ii) + (iii)$ and noting that $(ii) + (iii) = \oPcalas(1/n)$, we have
  \begin{equation}
    n\GCMWS_n^2 = \frac{\left ( \sum_{i=1}^n \xi_i \right )^2}{n \widehat \sigma_n^2} + \oPcalas(1).
  \end{equation}
  Applying \cref{theorem:generalized convergence,lemma:consistent-variance-estimation}, we have
  \begin{equation}
    \lim_\mto \sup_\Pnullin \sup_{x \geq 0} \left \lvert \P \left ( \supkm \left \{ k\GCMWS_k^2 - \log \left ( \frac{k}{m} \right ) \right \} \geq x \right ) - [1-\Psi(x)] \right \rvert = 0.
  \end{equation}
  Using a similar rearrangement of terms to what was found in the proof of \cref{theorem:two-sided inference}, we have that
  \begin{equation}
    \lim_\mto \sup_\Pnullin \sup_{\alpha \in (0, 1)} \left \lvert \P \left ( \exists k \geq m : 1-\Psi \left ( k \GCMWS_k^2 - \log \left ( \frac{k}{m} \right ) \right )  \leq \alpha\right ) - \alpha \right \rvert = 0,
  \end{equation}
  which completes the proof of \cref{theorem:seq-GCM}.
\end{proof}

\begin{lemma}[$\Pnull$-uniformly strongly consistent variance estimation]\label{lemma:consistent-variance-estimation}
  Let $\widehat \sigma_n^2$ be the sample variance of $R_i$:
  \begin{equation}
    \widehat \sigma_n^2 := \frac{1}{n}\sum_{i=1}^n R_{i}^2 - \left ( \frac{1}{n} \sum_{i=1}^n R_i \right )^2.
  \end{equation}
  Then, 
  \begin{equation}
    \frac{\widehat \sigma_n^2}{\EE(\xi^2)} - 1 = \oPnullas \left ( \frac{1}{\log n} \right ).
  \end{equation}
\end{lemma}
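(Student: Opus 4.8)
The plan is to write $\widehat\sigma_n^2 = \frac1n\sum_{i=1}^n R_i^2 - \widebar R_n^2$ with $\widebar R_n := \frac1n\sum_{i=1}^n R_i$, decompose each estimated residual via $R_i = \xi_i + b_i + \nu_i$ as in \eqref{eq:seqgcm-decomposition} (with $\nu_i = \nu_i^\xy + \nu_i^\yx$), and reduce the claim to controlling a handful of pieces. First observe that under the conditional independence null, $\EE_P(\xi^x\xi^y\mid Z) = \EE_P(\xi^x\mid Z)\,\EE_P(\xi^y\mid Z) = 0$, so $\EE_P\xi = 0$ and the target $\EE_P(\xi^2)$ is exactly $\Var_P(\xi)$. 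Expanding $\frac1n\sum R_i^2 = \frac1n\sum\xi_i^2 + \frac2n\sum\xi_i(R_i-\xi_i) + \frac1n\sum(R_i-\xi_i)^2$, I would treat these three summands, together with $\widebar R_n^2$, in turn.

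The ``main'' term $\frac1n\sum\xi_i^2$ is handled exactly as in \cref{theorem:consistent-variance-estimation}: the $\xi_i = \xi_i^x\xi_i^y$ are iid with $\Pnull$-uniformly bounded $(2+\delta)^\tth$ moments (Assumption~\ref{assumption:GCM-finite-moments}), so the $\xi_i^2$ are iid with $\Pnull$-uniformly bounded $(1+\delta/2)^\tth$ moments, hence $\Pnull$-uniformly uniformly integrable by the de la Vall\'ee Poussin criterion, and the distribution-uniform strong law of \citet*[Theorem~2]{waudby2023distribution} then gives $\frac1n\sum\xi_i^2 - \EE_P(\xi^2) = \oPnullas(n^{-\beta})$ for some $\beta > 0$. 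For the centering correction, $\widebar R_n = \widebar\xi_n + \widebar b_n + \widebar\nu_n$, where $\widebar\xi_n = \OPnullas(\sqrt{\log\log n/n})$ by the $\Pcal$-uniform law of the iterated logarithm (\cref{theorem:lil}) and $\widebar b_n, \widebar\nu_n = \oPnullas(1/\sqrt{n\log\log n})$ by Lemmas~\ref{lemma:bias-convergence} and~\ref{lemma:nu-convergence}, so the calculus of \cref{proposition:oPcalas-calculus} yields $\widebar R_n^2 = \OPnullas(\log\log n/n) = \oPnullas(1/\log n)$. Finally, Cauchy--Schwarz gives $\bigl|\frac2n\sum\xi_i(R_i-\xi_i)\bigr| \le 2\bigl(\frac1n\sum\xi_i^2\bigr)^{1/2}\bigl(\frac1n\sum(R_i-\xi_i)^2\bigr)^{1/2} = \OPnullas(1)\cdot\bigl(\frac1n\sum(R_i-\xi_i)^2\bigr)^{1/2}$, so the entire argument collapses to showing $\frac1n\sum(R_i-\xi_i)^2 = \oPnullas(1/(\log n)^2)$ --- this single bound disposes of the remainder term outright and, after taking a square root, of the cross term as well.

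For that bound I would use $(R_i-\xi_i)^2 \le 2b_i^2 + 2\nu_i^2$ and $\nu_i^2 \le 2(\nu_i^\xy)^2 + 2(\nu_i^\yx)^2$, and bound each of $\frac1n\sum b_i^2$, $\frac1n\sum(\nu_i^\xy)^2$, $\frac1n\sum(\nu_i^\yx)^2$ by splitting it into its (deterministic) mean and a fluctuation. The means are small: since $\EE_P\nu_i^\xy = 0$ under the null, $\EE_P(\nu_i^\xy)^2 = \Var_P(\nu_i^\xy) = O(1/(\log i)^{2+\delta})$ uniformly over $\Pnull$ by Assumption~\ref{assumption:SeqGCM-regularity}, whose average over $i\le n$ is $O(1/(\log n)^{2+\delta})$, while $\EE_P b_i^2 = \EE_P\|(\widehat\mu_i^x-\mu^x)(\widehat\mu_i^y-\mu^y)\|_{\LP}^2$ is $O(1/(i\log^{2+\delta}i))$ (after a routine Cauchy--Schwarz step, using the $L_4$ form of Assumption~\ref{assumption:SeqGCM-product-errors} or boundedness of the regression errors), whose average is $O(1/n)$; both of these are $o(1/(\log n)^2)$ and deterministic in $n$, hence trivially time- and $\Pnull$-uniform. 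The fluctuations are then controlled by conditioning on the training $\sigma$-field $\Gcal$, which makes the relevant summands conditionally independent across $i$ by the sample-splitting construction, and applying the distribution-uniform strong law of \citet*[Theorem~2]{waudby2023distribution} conditionally and then uniformly over $P\in\Pnull$ --- the same mechanism already used to prove Lemmas~\ref{lemma:bias-convergence} and~\ref{lemma:nu-convergence}. This produces an $\oPnullas(1)$ fluctuation which, combined with the deterministic $o(1/(\log n)^2)$ envelopes and one more pass through \cref{proposition:oPcalas-calculus}, gives $\frac1n\sum(R_i-\xi_i)^2 = \oPnullas(1/(\log n)^2)$, and assembling the pieces yields $\widehat\sigma_n^2 - \EE_P(\xi^2) = \oPnullas(1/\log n)$.

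The step I expect to be the main obstacle is precisely $\frac1n\sum(R_i-\xi_i)^2 = \oPnullas(1/(\log n)^2)$: we are handed only \emph{second}-moment (variance) control of the cross terms $\nu_i$ and a \emph{product} of (squared) $L_2$ errors for $b_i$ --- too little for a direct moment-based strong law --- so one must route through the conditional-independence structure afforded by sample splitting and lean on the distribution-uniform SLLN, while simultaneously (i) extracting a quantitative rate genuinely faster than $(\log n)^{-2}$, (ii) controlling the second-level fluctuation of the $\Gcal$-conditional means themselves, and (iii) keeping the convergence both time-uniform (a supremum over $k\ge m$) and $\Pnull$-uniform. Threading all three through the SLLN, as in the proofs of Lemmas~\ref{lemma:bias-convergence} and~\ref{lemma:nu-convergence}, is the delicate part; the surrounding bookkeeping is routine given \cref{proposition:oPcalas-calculus}.
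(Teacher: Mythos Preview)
Your decomposition mirrors the paper's (it expands $R_i^2$ into $\xi_i^2$ plus four explicit terms $\widebar\I_n,\dots,\widebar\IV_n$, a finer version of your $2\xi_i(R_i-\xi_i)+(R_i-\xi_i)^2$), and your treatment of $\tfrac1n\sum\xi_i^2$ and $\widebar R_n^2$ matches the paper's Steps~5--6. The genuine gap is your reduction to $\tfrac1n\sum(R_i-\xi_i)^2=\oPnullas((\log n)^{-2})$. That rate is not available under the stated assumptions: the first-moment SLLN of \citet[Theorem~2]{waudby2023distribution} applied to the nonnegative summands $(\nu_i^\xy)^2$ with $\EE(\nu_i^\xy)^2=O((\log i)^{-(2+\delta)})$ yields only $\tfrac1n\sum(\nu_i^\xy)^2=\oPnullas((\log n)^{-1})$ --- this is exactly \cref{lemma:convergence-of-cross-terms} --- and improving to $(\log n)^{-2}$ would need $\sum_k(\log k)^{-\delta}/k<\infty$, i.e.\ $\delta>1$, which is not assumed. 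Your own sketch concedes as much: you write that the SLLN ``produces an $\oPnullas(1)$ fluctuation which, combined with the deterministic $o(1/(\log n)^2)$ envelopes\ldots gives $\oPnullas(1/(\log n)^2)$,'' but $\oPnullas(1)+o((\log n)^{-2})$ is $\oPnullas(1)$, not $\oPnullas((\log n)^{-2})$; nothing in \cref{proposition:oPcalas-calculus} rescues this. You also invoke an $L_4$ form of \cref{assumption:SeqGCM-product-errors} or boundedness of the regression errors to control $\EE_P b_i^2$; neither is assumed.

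The paper sidesteps the $(\log n)^{-2}$ requirement by regrouping factors \emph{before} Cauchy--Schwarz. For the $\xi_ib_i$ piece (Term~$\II$) it writes $\xi_ib_i=[\xi_i^x\{\mu^y-\widehat\mu_i^y\}]\cdot[\xi_i^y\{\mu^x-\widehat\mu_i^x\}]$, so Cauchy--Schwarz gives $\sqrt{\tfrac1n\sum(\nu_i^\yx)^2}\cdot\sqrt{\tfrac1n\sum(\nu_i^\xy)^2}=\oPnullas((\log n)^{-1/2})\cdot\oPnullas((\log n)^{-1/2})=\oPnullas((\log n)^{-1})$, using only the rate that \cref{lemma:convergence-of-cross-terms} actually delivers; Terms~$\III$ and~$\IV$ are handled similarly via $2|ab|\le a^2+b^2$ and the same lemma. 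For the $\xi_i\nu_i$ piece (Term~$\I$) the paper pairs $\xi_i$ against $\xi_i^x\{\mu^y-\widehat\mu_i^y\}$ just as your Cauchy--Schwarz does and asserts the square-root factor $(\dagger)$ is $\oPnullas((\log n)^{-1})$, though \cref{lemma:convergence-of-cross-terms} strictly gives only $(\dagger)^2=\oPnullas((\log n)^{-1})$ --- so that particular step shares your difficulty. The difference is that your single-shot Cauchy--Schwarz against $(R_i-\xi_i)$ \emph{additionally} loses on the $\xi_ib_i$ and $(R_i-\xi_i)^2$ pieces, where the paper's factor-by-factor regrouping is genuinely sharper and requires only the $(\log n)^{-1}$ control that is actually in hand.
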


\begin{proof}[\proofpreamble{}of \cref{lemma:consistent-variance-estimation}]
  First, consider the following decomposition:
  \begin{align}
    R_{i}^2 &= \left [ \xi_i^x \xi_i^y + \xi_i^x \left \{ \mu^y-\widehat \mu^y_{i} \right \} + \xi_i^y \left \{ \mu^x - \widehat \mu^x_{i} \right \} + \left ( \widehat \mu^x_{i} - \mu^x \right ) \left ( \widehat \mu^y_{i} -  \mu^y \right ) \right ]^2 \\
              &= \xi_i^2 +\\
              &\quad \underbrace{2 (\xi_i^x)^2 \xi_i^y \left \{ \mu^y - \widehat \mu^y_{i} \right \} + 2 (\xi_i^y)^2 \xi_i^x \left \{ \widehat \mu^x_{i} - \mu^x \right \}}_{\I_i} + \\
    &\quad \underbrace{4 \xi_i \left \{ \mu^x - \widehat \mu^x_{i} \right \} \left \{ \mu^y - \widehat \mu^y_{i} \right \}}_{\II_i} +\\
              &\quad \underbrace{2 \xi_i^x \left \{ \mu^y - \widehat \mu^y_{i} \right \}^2 \left \{ \mu^x - \widehat \mu^x_{i} \right \} + 2 \xi_i^y \left \{ \mu^x - \widehat \mu^x_{i} \right \}^2 \left \{ \mu^y - \widehat \mu^y_{i} \right \}}_{\III_i}+\\
                &\quad \underbrace{\left \{ \mu^x - \widehat \mu^x_{i} \right \}^2\left \{ \mu^y - \widehat \mu^y_{i} \right \}^2}_{\IV_i}.
  \end{align}
  Letting $\widebar \I_n := \frac{1}{n} \sum_{i=1}^n  \I_i$ and similarly for $\widebar \II_n$, $\widebar \III_n$, and $\widebar \IV_n$, we have that
  \begin{equation}
    \widehat \sigma_n^2 = \frac{1}{n} \sum_{i=1}^n  \xi_i^2 + \widebar \I_n + \widebar \II_n + \widebar \III_n + \widebar \IV_n - (\widebar R_n)^2
  \end{equation}
  and we will separately show that $\widebar \I_n$, $\widebar \II_n$, $\widebar \III_n$, $\widebar \IV_n$, and $(\widebar R_n)^2$ are all $\oPnullas(1 / \log n)$.

  \paragraph{Step I: Convergence of $\widebar \I_n$}
  By the Cauchy-Schwarz inequality, we have that
  \begin{align}
    \frac{1}{n}\sum_{i=1}^n (\xi_i^x)^2 \xi_i^y \left \{ \mu^y - \widehat \mu^y_{i} \right \} &\leq  \underbrace{\sqrt{ \frac{1}{n}\sum_{i=1}^n (\xi_i^x\xi_i^y)^2 }}_{(\star)}\cdot \underbrace{\sqrt{ \frac{1}{n}\sum_{i=1}^n(\xi_i^x)^2 \left \{ \mu^y - \widehat \mu^y_{i} \right \}^2  }}_{(\dagger)}.
  \end{align}
  Now, writing $\xi_i := \xi_i^x \xi_i^y$, notice that
  \begin{align}
    (\star) &\equiv \frac{1}{n} \sum_{i=1}^n \xi_i^2\\
            &\leq \left (\frac{1}{n} \sum_{i=1}^n \xi_i^2 - \EE(\xi_i^2) \right ) + \EE(\xi_i^2) \\
    &= \oPnullas(1) + \EE \left [ \left ( |\xi_i|^{2+\delta} \right )^{\frac{2}{2+\delta}} \right ]\\
            &\leq \oPnullas(1) + \left ( \EE |\xi_i|^{2+\delta} \right )^{\frac{2}{2+\delta}}\\
    &\leq \OPnullas(1),
  \end{align}
  where the last line follows from \cref{assumption:GCM-finite-moments}. Moreover, by \cref{lemma:convergence-of-cross-terms}, we have that $(\dagger) = \oPnullas(1/ \log n)$, and hence by \cref{proposition:oPcalas-calculus}, $\widebar \I_n \leq (\star)\cdot (\dagger) = \oPnullas(1/\log n)$.
  
  \paragraph{Step II: Convergence of $\widebar \II_n$} Again by Cauchy-Schwarz, we have
  \begin{align}
    \frac{1}{n} \sum_{i=1}^n \xi^x_i \xi_i^y \{ \mu^x - \widehat \mu_{i}^x \} \{ \mu^y - \widehat \mu_{i}^y \} \leq \sqrt{\frac{1}{n} \sum_{i=1}^n (\xi_i^x)^2 \{ \mu^y - \widehat \mu_{i}^y\}^2 } \cdot \sqrt{\frac{1}{n} \sum_{i=1}^n (\xi_i^y)^2 \{ \mu^x - \widehat \mu_{i}^x\}^2},
  \end{align}
  and hence again by \cref{lemma:convergence-of-cross-terms}, we have $\widebar \II_n = \oPnullas(1 /\log n)$.

  \paragraph{Step III: Convergence of $\widebar \III_n$}
  Following \citet[Section D.1]{shah2020hardness} and using the inequality $2|ab| \leq a^2 + b^2$ for any $a, b \in \RR$, we have 
  \begin{align}
    &\frac{2}{n} \sum_{i=1}^n \xi_i^x \{\mu^y - \widehat \mu_{i}^y \}^2 \{ \mu^x - \widehat \mu_{i}^x \} \\
    \leq\ &\frac{1}{n} \sum_{i=1}^n (\xi_i^x)^2 \{ \mu^y - \widehat \mu_{i}^y \}^2 + \frac{1}{n} \sum_{i=1}^n \{ \mu^y - \widehat \mu_{i}^y \}^2 \{ \mu^x - \widehat \mu_{i}^x \}^2,
  \end{align}
  and hence by Lemmas~\ref{lemma:convergence-of-cross-terms} and \ref{lemma:bias-convergence}, we have $\widebar \III_n = \oPnullas(1/ \log n)$.

  \paragraph{Step IV: Convergence of $\widebar \IV_n$}
  First, notice that
  \begin{align}
    \widebar \IV_n&:= \frac{1}{n} \sum_{i=1}^n \left \{ \mu^x - \widehat \mu_{i}^x \right \}^2 \cdot \left \{ \mu^y - \widehat \mu_{i}^y \right \}^2\\
    \ifverbose %
    &\leq \frac{1}{n} \sum_{i=1}^n \left \{ \mu^x - \widehat \mu_{i}^x \right \}^2 \cdot \sum_{i=1}^n \left \{ \mu^y - \widehat \mu_{i}^y \right \}^2\\
    \fi %
    &\leq n \cdot \frac{1}{n} \sum_{i=1}^n \left \{ \mu^x - \widehat \mu_{i}^x \right \}^2 \cdot \frac{1}{n}\sum_{i=1}^n \left \{ \mu^y - \widehat \mu_{i}^y \right \}^2.
  \end{align}
  Applying Lemmas~\ref{lemma:bias-convergence} and \ref{proposition:oPcalas-calculus}, we have that $\widebar \IV_n = \oPnullas(1/ \log n)$.

  \paragraph{Step V: Convergence of $(\widebar R_n)^2$ to 0}
  We will show that $(\widebar R_n)^2 = \oPnullas(1/\log n)$.
  Using the decomposition in \eqref{eq:seqgcm-decomposition} at the outset of the proof of \cref{theorem:seq-GCM}, we have that
  \begin{equation}
    \widebar R_n := \widebar \xi_n + \widebar b_n + \widebar \nu_n.
  \end{equation}
  Therefore, we can write its square as
  \begin{equation}
    (\widebar R_n)^2 = (\widebar \xi_n)^2 + 2\widebar \xi_n \cdot \left ( \widebar b_n + \widebar \nu_n \right ) + (\widebar b_n + \widebar \nu_n)^2.
  \end{equation}
  By \cref{assumption:GCM-finite-moments}, we have that there exists a $\delta > 0$ so that $\sup_\Pnullin \EE_\P|\xi|^{2+\delta} < \infty$. By the de la Vall\'ee-Poussin criterion for uniform integrability, we have that the $(1+\delta)^\text{th}$ moment of $\xi$ is uniformly integrable:
  \begin{equation}
    \lim_\mto \sup_\Pnullin \EE_\P \left ( |\xi|^{1+\delta} \1\{ |\xi|^{1+\delta} \geq m \} \right ) = 0.
  \end{equation}
  By \citet[Theorem 1]{waudby2024distribution}, we have that $\widebar \xi_n = \oPnullas \left ( n^{1/(1+\delta) - 1} \right )$, and in particular,
  \begin{equation}
    \widebar \xi_n = \oPnullas \left ( 1/\sqrt{\log n} \right ).
  \end{equation}
  Using \cref{proposition:oPcalas-calculus}, we observe that
  \begin{equation}
    (\widebar \xi_n)^2 = \oPnullas \left ( 1/\log n \right ),
  \end{equation}
  and hence it now suffices to show that $\widebar b_n + \widebar \nu_n = \oPnullas(1/\log n)$.
  Indeed, by Lemmas~\ref{lemma:bias-convergence} and \ref{lemma:nu-convergence}, we have that $\widebar b_n = \oPnullas(1/\sqrt{n \log \log n})$ and $\widebar \nu_n = \oPnullas(1/\sqrt{n\log \log n})$, respectively. Putting these together, we have
  \begin{equation}
    (\widebar R_n)^2 = (\widebar \xi_n)^2 + 2\widebar \xi_n \cdot \left ( \widebar b_n + \widebar \nu_n \right ) + (\widebar b_n + \widebar \nu_n)^2 = \oPnullas(1/\log n),
  \end{equation}
  completing the argument for Step V.

  \paragraph{Step VI: Convergence of $\widehat \sigma_n^2$ to $\EE(\xi^2)$} Putting Steps I--V together, notice that
  \begin{align}
    \widehat \sigma_n^2 - \EE(\xi^2) &= \frac{1}{n} \sum_{i=1}^n \xi_i^2- \EE(\xi^2) + \widebar \I_n + \widebar \II_n + \widebar \III_n + \widebar \IV_n - (\widebar R_n)^2\\
    &= \frac{1}{n} \sum_{i=1}^n \xi_i^2 - \EE(\xi^2) + \oPnullas(1/ \log n).
  \end{align}
  Now, since $\sup_\Pin \EE_\P |\xi^2|^{1+\delta/2} < \infty$, $\xi^2$ we have by the de la Vall\'ee criterion for uniform integrability that $\xi^2$ has a $\Pnull$-uniformly integrable $(1+\delta/4)^\tth$ moment meaning that
  \begin{equation}
    \lim_{m \to \infty}\sup_\Pnullin \EE_\P \left [ (\xi^2)^{1+\delta/4} \1 \left \{ (\xi^2)^{1+\delta/4} > m \right \} \right ] = 0,
  \end{equation}
  and hence by \citet[Theorem 1]{waudby2024distribution}, we have that
 \begin{equation}\label{eq:variance estimation used wsetal}
   \frac{1}{n} \sum_{i=1}^n \xi_i^2 - \EE(\xi^2) = \oPcalas \left ( n^{1/(1+\delta/4) - 1} \right ),
 \end{equation} 
 and in particular, $\frac{1}{n} \sum_{i=1}^n \xi_i^2 - \EE(\xi^2) = \oPcalas(1/\log n )$, so that $\widehat \sigma_n^2 - \EE(\xi^2) = \oPcalas(1 / \log n)$. To complete the proof, we use the fact that $\ubar \sigma^2 := \inf_\Pin \EE_\P(\xi^2)$ is positive. Indeed, we have that for any $\Pin$, any $\eps > 0$, and any $m \in \NN$,
 \begin{align}
   \P \left ( \supkm \log(k) \left \lvert \frac{\widehat \sigma_k^2}{\EE_\P(\xi^2)} - 1 \right \rvert \geq \eps \right ) &= \P \left ( \supkm \frac{\log(k)}{\EE_\P(\xi^2)} \left \lvert \widehat \sigma_k^2 - \EE_\P(\xi^2) \right \rvert \geq \eps \right ) \\
                                                                                                                 &\leq \P \left ( \supkm \frac{|\widehat \sigma_k^2 - \EE_\P(\xi^2)|}{\log^{-1}(k)} \geq \eps \ubar \sigma^2 \right ).
 \end{align}
 Taking suprema over $\Pcal$ and limits as $\mto$ and relying on the remark following \eqref{eq:variance estimation used wsetal} completes the proof.
\end{proof}

\begin{lemma}[Convergence of the average bias term]\label{lemma:bias-convergence}
  Under \cref{assumption:SeqGCM-product-errors}, we have that
  \begin{equation}
    \widebar b_n \equiv \frac{1}{n} \sum_{i=1}^n b_i = \oPcalas \left ( 1/\sqrt{n \log \log n} \right ).
  \end{equation}
\end{lemma}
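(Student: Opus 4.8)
The plan is to bound $\widebar b_n$ in absolute value by the average of the $|b_i|$, show that $\EE_P|b_i|$ decays at the rate supplied by \cref{assumption:SeqGCM-product-errors} uniformly over $\Pnull$, and then pass from this in-expectation (``weak'') bound to the required almost-sure, time-uniform, and $\Pnull$-uniform statement by a dyadic-blocking argument together with Markov's inequality and a Borel--Cantelli step --- the same ``weak-to-strong'' device used elsewhere in the paper, here made elementary by the slack in the rate of \cref{assumption:SeqGCM-product-errors}. First, by the triangle inequality $|\widebar b_n| \le \tfrac1n\sum_{i=1}^n |b_i|$. Since the sample used to fit $\widehat\mu_i^x,\widehat\mu_i^y$ is independent of the evaluation point $Z_i$ at which $b_i$ is evaluated, conditioning on the training data and applying Cauchy--Schwarz in $\LP$ gives $\EE_P[\,|b_i|\mid \text{training}\,] \le \|\widehat\mu_i^x - \mu^x\|_\LP\,\|\widehat\mu_i^y-\mu^y\|_\LP$; taking expectations and invoking \cref{assumption:SeqGCM-product-errors} yields
\begin{equation}
  \sup_\Pnullin \EE_P |b_i| \;=\; O\!\left( \frac{1}{\sqrt{i\,\log^{2+\delta}(i)}} \right).
\end{equation}

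Next I would establish the claim in the form
\begin{equation}
  \forall \eps > 0,\qquad \lim_{m\to\infty} \sup_\Pnullin \PP_P\!\left( \sup_{k \geq m} \sqrt{k\log\log k}\;\Bigl| \tfrac1k\textstyle\sum_{i=1}^k b_i \Bigr| \;\geq\; \eps \right) \;=\; 0,
\end{equation}
which is exactly $\widebar b_n = \oPnullas(1/\sqrt{n\log\log n})$. Split $\{k \ge m\}$ into dyadic blocks $k \in [2^j, 2^{j+1})$. On such a block, $\sqrt{k\log\log k}/k \le C_1 \sqrt{\log j}\, 2^{-j/2}$ for an absolute constant $C_1$, and since $|b_i|\ge 0$ the partial sums are nondecreasing, so $\sum_{i=1}^k |b_i| \le \sum_{i=1}^{2^{j+1}} |b_i|$; hence $\sup_{2^j \le k < 2^{j+1}} \sqrt{k\log\log k}\,\bigl|\tfrac1k\sum_{i\le k} b_i\bigr| \le C_1 \sqrt{\log j}\, 2^{-j/2} \sum_{i=1}^{2^{j+1}} |b_i|$, and taking $\sup_\Pnullin \EE_P$ and using $\sum_{i=1}^{N} (i\log^{2+\delta} i)^{-1/2} = O\bigl(\sqrt N (\log N)^{-(1+\delta/2)}\bigr)$ produces a bound of order $\sqrt{\log j}\,/\,j^{1+\delta/2}$. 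This series is summable over $j$ precisely because $\delta > 0$, so by a union bound over the blocks $j \ge \lfloor \log_2 m\rfloor$ and Markov's inequality, $\sup_\Pnullin \PP_P(\sup_{k\ge m}(\cdots) \ge \eps) \le \eps^{-1}\sum_{j \ge \lfloor\log_2 m\rfloor} O(\sqrt{\log j}/j^{1+\delta/2}) \to 0$ as $m\to\infty$. Combining this with the first paragraph completes the proof; alternatively one may phrase the passage to a.s.\ convergence through the equivalence in \eqref{eq:equivalence-as-convergence} and Borel--Cantelli.

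The proof is essentially bookkeeping, and its one delicate ingredient --- where I expect the real work to lie --- is this dyadic passage from the in-expectation decay of $\EE_P|b_i|$ to the time- and $\Pnull$-uniform almost-sure statement while keeping every constant uniform over $\Pnull$ (which it is, since \cref{assumption:SeqGCM-product-errors} places the $\sup$ over $\Pnull$ inside). The arithmetic just barely closes: the normalization $\sqrt{n\log\log n}$ costs a $\log\log$ factor and the block maximum costs another $\sqrt{\log j}$, and these are absorbed exactly because \cref{assumption:SeqGCM-product-errors} carries the polylogarithmically faster rate $(n\log^{2+\delta} n)^{-1/2}$ rather than the bare $n^{-1/2}$ of its batch analogue \cref{assumption:GCM-regression-product-errors} --- this is the reason the extra $(\log n)^{2+\delta}$ was built into the assumption. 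Note that, unlike the companion lemmas on the cross-terms, no moment condition beyond \cref{assumption:SeqGCM-product-errors} is needed here, since we never require the partial sums $\sum_{i\le k} b_i$ themselves to be small, only their expected absolute values.
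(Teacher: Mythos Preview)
Your proof is correct. Both you and the paper start from the same inequality $\EE_P|b_i|\le\|\widehat\mu_i^x-\mu^x\|_\LP\|\widehat\mu_i^y-\mu^y\|_\LP$ (via Cauchy--Schwarz and training/evaluation independence) and the $\Pnull$-uniform rate supplied by \cref{assumption:SeqGCM-product-errors}. The divergence is in the weak-to-strong passage: the paper checks the summability condition
\[
\lim_{m\to\infty}\sup_\Pnullin\sum_{k=m}^\infty\frac{\EE_P|b_k|}{\sqrt{k/\log\log k}}=0
\]
and then invokes \citet[Theorem~2]{waudby2023distribution}, a distribution-uniform SLLN for independent non-identically distributed summands, as a black box. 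You instead carry out the dyadic-blocking/Markov/union-bound argument explicitly, using the monotonicity of $\sum_{i\le k}|b_i|$ to control the block maxima. Your route is more self-contained and shows transparently where the $(\log n)^{2+\delta}$ slack in \cref{assumption:SeqGCM-product-errors} is spent; the paper's route is shorter on the page but relies on an external result whose proof presumably contains an argument of exactly your type. The arithmetic you sketch (the block bound $\sqrt{\log j}/j^{1+\delta/2}$ being summable) is correct and matches the paper's computation after reindexing.
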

\begin{proof}
  Under \cref{assumption:SeqGCM-product-errors}, we have that
  \begin{equation}
    \sup_\Pnullin \EE_\P \| \widehat \mu_n^x - \mu^x \|_\LP^2 \cdot \EE_\P \| \widehat \mu_n^y - \mu^y \|_\LP^2 = O \left ( \frac{1}{n \log^{2+\delta}(n)} \right ),
  \end{equation}
  and hence let $C_\Pnull > 0$ be a constant depending only on $\Pnull$ so that
  \begin{equation}
    \sup_\Pnullin \EE_\P \| \widehat \mu_n^x - \mu^x \|_\LP^2 \cdot \EE_\P\| \widehat \mu_n^y - \mu^y \|_\LP^2 \leq \frac{C_\Pnull}{(n + 1) \log^{2+\delta/2}(n + 1) \log \log (n + 1)}
  \end{equation}
  for all $n$ sufficiently large. Consider the following series for all $k \geq m$ for any $m \geq 3$ and apply Cauchy-Schwarz to the norm $\EE_\P|\cdot |$ to obtain
  \begin{align}
    &\sup_\Pnullin \sum_{k=m}^\infty \frac{\EE_\P \left \lvert  \left \{ \widehat \mu_{k-1}^x(Z_k) - \mu^x(Z_k) \right \} \cdot \left \{ \widehat \mu_{k-1}^y(Z_k) - \mu^y(Z_k) \right \} \right \rvert}{\sqrt{k / \log \log k}}\\
    \leq\ &\sup_\Pnullin \sum_{k=m}^\infty \frac{\sqrt{\EE_\P \left \| \widehat \mu_{k-1}^x - \mu^x \right \|_\LP^2}  \cdot \sqrt{\EE_\P \left \| \widehat \mu_{k-1}^y - \mu^y \right \|_\LP^2} }{\sqrt{k / \log \log k}}\\
    =\ &\sum_{k=m}^\infty \frac{ C_\Pnull }{\sqrt{k \log^{2+\delta/2}(k) \cancel{\log \log k}} \cdot \sqrt{k / \cancel{\log \log k}}}\\
    =\ &\sum_{k=m}^\infty \frac{ C_\Pnull }{k \log^{1+\delta/4}(k) },
  \end{align}
  and since $(k \log^{1+\delta/4}(k))^{-1}$ is summable for any $\delta > 0$, we have that the above vanishes as $m \to \infty$, hence
  \begin{equation}
    \lim_\mto \sup_\Pnullin \sum_{k=m}^\infty \frac{\EE_\P \left \lvert  \left \{ \widehat \mu_{k-1}^x(Z_k) - \mu^x(Z_k) \right \} \cdot \left \{ \widehat \mu_{k-1}^y(Z_k) - \mu^y(Z_k) \right \} \right \rvert}{\sqrt{k / \log \log k}}= 0.
  \end{equation}
  Applying \citet[Theorem 2]{waudby2024distribution}, we have that
  \begin{equation}
    \widebar b_n \equiv \frac{1}{n} \sum_{k=1}^n \left \{ \widehat \mu_{k-1}^x(Z_k) - \mu^x(Z_k) \right \} \cdot \left \{ \widehat \mu_{k-1}^y(Z_k) - \mu^y(Z_k) \right \} = \oPnullas \left ( \frac{1}{\sqrt{n \log \log n}} \right ),
  \end{equation}
  which completes the proof.
\end{proof}

\begin{lemma}[Convergence of average cross-terms]\label{lemma:nu-convergence}
  Suppose that for some $\delta > 0$, and some independent $Z$ with the same distribution as $Z_n$,
  \begin{equation}\label{eq:nu-convergence-condition}
    \sup_\Pnullin \EE_\P \left[ \left ( \{ \widehat \mu_n^x(Z_n) - \mu^x(Z_n) \} \xi_n^y \right )^2 \right ] = O \left ( \frac{1}{(\log n)^{2+\delta}} \right ).
  \end{equation}
  Then,
  \begin{equation}
    \frac{1}{n} \sum_{i=1}^n \nu_i^\xy = \oPnullas(1/\sqrt{n \log \log n}),
  \end{equation}
  with an analogous statement holding when $x$ and $y$ are swapped in the above condition and conclusion.
\end{lemma}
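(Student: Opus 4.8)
The plan is to follow the blueprint of \cref{lemma:bias-convergence}, again appealing to the $\Pcal$-uniform strong law of large numbers for independent, non-identically distributed random variables of \citet[Theorem 2]{waudby2023distribution}. The crucial difference is that $\widebar b_n$ was handled through an $L^1$-type weighted summability condition, which worked only because $\EE_P|b_k|$ already decays like $k^{-1/2}$ up to a power of $\log k$; the cross-term $\nu_k^\xy$ enjoys no such $k^{-1/2}$ decay --- hypothesis \eqref{eq:nu-convergence-condition} controls it only polylogarithmically --- so the $L^1$ route diverges and one must instead exploit that $\nu_k^\xy$ has \emph{mean zero} and invoke the theorem in its Kolmogorov (second-moment) form. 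To see that $\EE_P\nu_k^\xy = 0$, condition on $Z_k$ together with the independent sample used to fit $\widehat\mu_k^x$: the factor $\widehat\mu_k^x(Z_k) - \mu^x(Z_k)$ is then measurable, whereas $\EE_P[\xi_k^y \mid Z_k, \mathrm{train}] = \EE_P[Y_k - \mu^y(Z_k) \mid Z_k] = 0$ by the definition $\mu^y(z) = \EE_P(Y \mid Z = z)$ and independence of the training sample, so the tower property gives $\EE_P \nu_k^\xy = 0$ and hence $\Var_P(\nu_k^\xy) = \EE_P[(\nu_k^\xy)^2]$. (Note that the conditional independence null plays no role here.) The sequential sample-splitting construction makes $(\nu_k^\xy)_{k \ge 1}$ fit the independence structure required by \citet[Theorem 2]{waudby2023distribution} in exactly the way $(b_k)_{k \ge 1}$ did in \cref{lemma:bias-convergence}.

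With mean-zero-ness in hand, take the normalizing sequence $c_k := \sqrt{k / \log\log k}$, which is eventually increasing to $\infty$ and for which the claim $\widebar\nu_n^\xy = \oPnullas(1/\sqrt{n\log\log n})$ is precisely $c_n^{-1}\sum_{i=1}^n \nu_i^\xy = \oPnullas(1)$. Using \eqref{eq:nu-convergence-condition} to get $\sup_\Pnullin \Var_P(\nu_k^\xy) = \sup_\Pnullin \EE_P[(\nu_k^\xy)^2] \le C_\Pnull (\log k)^{-(2+\delta)}$ for all large $k$ and a constant $C_\Pnull$ depending only on $\Pnull$, the Kolmogorov-type summability condition demanded by the theorem becomes
\[
  \sup_\Pnullin \sum_{k=m}^\infty \frac{\Var_P(\nu_k^\xy)}{c_k^2} \;\le\; \sum_{k=m}^\infty \frac{C_\Pnull\,\log\log k}{k (\log k)^{2+\delta}} \;\le\; \sum_{k=m}^\infty \frac{C_\Pnull}{k (\log k)^{1+\delta}} \;\longrightarrow\; 0 \quad (m \to \infty),
\]
since $\log\log k \le \log k$ and $\sum_k (k (\log k)^{1+\delta})^{-1} < \infty$ for every $\delta > 0$. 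Applying \citet[Theorem 2]{waudby2023distribution} then yields $\widebar\nu_n^\xy = \oPnullas(1/\sqrt{n\log\log n})$, and the companion statement for $\nu_i^\yx = \{\widehat\mu_i^y(Z_i) - \mu^y(Z_i)\}\xi_i^x$ follows verbatim after swapping $x$ and $y$ (now using $\mu^x(z) = \EE_P(X \mid Z = z)$ so that $\EE_P[\xi_k^x \mid Z_k, \mathrm{train}] = 0$).

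The one point that requires care is matching the precise hypotheses of \citet[Theorem 2]{waudby2023distribution}: one must confirm --- as was already implicit for $\widebar b_n$ --- that the sequential training scheme renders $(\nu_k^\xy)_{k \ge 1}$ independent, or, working conditionally on the training data, an independent mean-zero sequence whose conditional variances inherit the bound in \eqref{eq:nu-convergence-condition} by Fubini; and that the theorem is available in the centered second-moment form with a tail-of-the-series condition. Should only an $L^1$-type weighted form be at hand, the same conclusion still follows by centering (a no-op here) and applying it to the truncations $\nu_i^\xy\,\1\{|\nu_i^\xy| \le c_i\}$, the complementary events being summable via $\sum_k \PP_P(|\nu_k^\xy| > c_k) \le \sum_k \Var_P(\nu_k^\xy)/c_k^2 < \infty$. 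Everything else is a routine tail-of-a-convergent-series estimate.
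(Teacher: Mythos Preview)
Your proposal is correct and follows essentially the same route as the paper: verify the Kolmogorov-type tail-summability condition $\sup_{P\in\Pnull}\sum_{k\geq m}\Var_P(\nu_k^\xy)/(k/\log\log k)\to 0$ via the bound $\sum_k C_\Pnull/(k(\log k)^{1+\delta})<\infty$, and then invoke \citet[Theorem~2]{waudby2023distribution}. The paper's proof is terser---it writes the second-moment bound and the series estimate without pausing to justify $\EE_P\nu_k^\xy=0$ or to contrast with the $L^1$ route used for $\widebar b_n$---but the argument is the same.
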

\begin{proof}
  We will only prove the result for $\nu_i^\xy$ but the same argument goes through for $\nu_i^\yx$. Appealing to \eqref{eq:nu-convergence-condition}, let $C_\Pnull$ be a constant so that
  \begin{equation}
    \sup_\Pin \EE_\P \left[ \left ( \{ \widehat \mu_n^x(Z_n) - \mu^x(Z_n) \} \xi_n^y \right )^2 \right ] \leq \frac{C_\Pnull }{(\log n)^{2+\delta}}.
  \end{equation}
  Then notice that for all $m$ sufficiently large
 \begin{align}
   &\sup_\Pin \sum_{k=m}^\infty \frac{\EE_\P \left[ \left ( \{ \widehat \mu_k^x(Z_k) - \mu^x(Z_k) \} \xi_k^y \right )^2 \right ]}{k / \log \log k}\\
   \leq\ &\sup_\Pin \sum_{k=m}^\infty \frac{C_\Pnull}{k (\log k)^{2+\delta} / \log \log k}\\
   \leq\ &\sup_\Pin \sum_{k=m}^\infty \frac{C_\Pnull}{k (\log k)^{1+\delta}}\\
   =\ &0,
 \end{align}  
 and hence
 \begin{equation}
   \lim_\mto \sup_\Pin \sum_{k=m}^\infty \frac{\EE_\P \left[ \left ( \{ \widehat \mu_{k-1}^x(Z_k) - \mu^x(Z_k) \} \xi_k^y \right )^2 \right ]}{k / \log \log k} = 0.
 \end{equation}
 By \citet[Theorem 2]{waudby2024distribution}, we have that
 \begin{equation}
   \frac{1}{n} \sum_{i=1}^n \nu_i^\xy = \oPcalas(1/\sqrt{n \log \log n}),
 \end{equation}
 completing the proof.
\end{proof}

\begin{lemma}[Convergence of average squared cross-terms]\label{lemma:convergence-of-cross-terms}
  Under \cref{assumption:SeqGCM-regularity}, we have that
  \begin{equation}
    \frac{1}{n} \sum_{i=1}^n (\nu^\xy_i)^2 \equiv \frac{1}{n}\sum_{i=1}^n (\xi_i^x)^2 \{ \mu^y - \widehat \mu_i^y \}^2 = \oPnullas \left ( 1/\log n \right ).
  \end{equation}
  An analogous statement holds with $\xi_n^x$ replaced by $\xi_n^y$ and $\{\mu^y(Z_n) - \widehat \mu_n^y(Z_n) \}$ replaced by $\{\mu^x(Z_n) - \widehat \mu_n^x(Z_n) \}$.
\end{lemma}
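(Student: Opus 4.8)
The plan is to exploit the fact that the summand is a \emph{square}, so that only first moments enter. Write $V_i := (\xi_i^x)^2\{\mu^y(Z_i) - \widehat\mu_i^y(Z_i)\}^2 \geq 0$; by the decomposition preceding the proof of \cref{theorem:seq-GCM} this is exactly $(\nu_i^\yx)^2$ with $\nu_i^\yx = \{\widehat\mu_i^y(Z_i) - \mu^y(Z_i)\}\xi_i^x$. First I would pin down $\EE_P V_i$ uniformly over $\Pnull$: since $\EE_P[\xi_i^x\mid Z_i] = \EE_P[X_i - \mu^x(Z_i)\mid Z_i] = 0$ and $\widehat\mu_i^y$ is trained on a sample independent of the evaluation triple $(X_i, Y_i, Z_i)$, the tower property gives $\EE_P[\nu_i^\yx] = 0$, whence $\EE_P V_i = \Var_P(\nu_i^\yx)$. \cref{assumption:SeqGCM-regularity} then furnishes a constant $C_\Pnull$ depending only on $\Pnull$ with $\sup_\Pnullin \EE_P V_i \leq C_\Pnull/(\log i)^{2+\delta}$ for all large $i$ (the finitely many small-$i$ terms are bounded uniformly in $\Pnull$ and will be negligible).

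Next I would control the running average $T_k := \tfrac1k\sum_{i=1}^k V_i$. Splitting off the finitely many small-index terms — which contribute $O(1/k)$ — and comparing the tail $\sum_{i \leq k}(\log i)^{-(2+\delta)}$ with $\int^k (\log t)^{-(2+\delta)}\,dt$, which is $O\big(k/(\log k)^{2+\delta}\big)$ because $\log t \geq (\log k)/2$ on $[\sqrt k, k]$, yields $\sup_\Pnullin \EE_P T_k \leq B_\Pnull/k + C_\Pnull'/(\log k)^{2+\delta}$. This slowly-varying Cesàro estimate is one of the two places where care is needed: it is crucial that the exponent is \emph{strictly} above $2$, because $\delta > 0$ is precisely what will make the series in the final step converge.

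The last step is a dyadic-blocking argument to upgrade this to $\frac1n\sum_{i=1}^n V_i = \oPnullas(1/\log n)$. Since $V_i \geq 0$, for $k \in [2^j, 2^{j+1})$ one has $\log k \cdot T_k \leq 2(j+1)(\log 2)\, T_{2^{j+1}}$, so with $J := \lfloor \log_2 m \rfloor$, $\sup_{k \geq m}\log k \cdot T_k \leq 2\log 2 \,\sup_{j \geq J}(j+1) T_{2^{j+1}}$. A union bound over $j$ together with Markov's inequality applied to each nonnegative $T_{2^{j+1}}$ gives, for every $\eps > 0$,
\begin{equation*}
  \sup_\Pnullin \PP_P\!\left( \sup_{k \geq m} \log k \cdot T_k \geq \eps \right) \;\leq\; \frac{2\log 2}{\eps}\sum_{j \geq J}(j+1)\,\sup_\Pnullin \EE_P T_{2^{j+1}} \;\leq\; \frac{1}{\eps}\sum_{j \geq J}\left( \frac{C_\Pnull''}{(j+1)^{1+\delta}} + \frac{B_\Pnull(j+1)}{2^{j+1}} \right),
\end{equation*}
and both series are summable, so the right-hand side tends to $0$ as $m \to \infty$ uniformly over $\Pnull$ (every constant depends only on $\Pnull$) — which is exactly the claimed convergence. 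The statement with $x$ and $y$ interchanged follows from the identical argument using the other half of \cref{assumption:SeqGCM-regularity} and $\EE_P[\xi^y \mid Z] = 0$; alternatively, once $\sup_\Pnullin \EE_P V_i = O((\log i)^{-(2+\delta)})$ is in hand one may instead invoke the distribution-uniform strong law of \citet[Theorem 2]{waudby2023distribution} exactly as in Lemmas~\ref{lemma:bias-convergence} and~\ref{lemma:nu-convergence}. I expect the main obstacle to be bookkeeping rather than conceptual: making sure the mean-zero reduction $\EE_P[\nu_i^\yx] = 0$ is legitimate (it relies on the independence of the training sample built into the SeqGCM construction) and that no constant smuggles in a hidden dependence on $P$, so that all the suprema over $\Pnull$ genuinely survive the limit $m \to \infty$.
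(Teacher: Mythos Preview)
Your proposal is correct. The paper's proof is much shorter: it simply verifies that
\[
\lim_{m\to\infty}\sup_{P\in\Pnull}\sum_{k=m}^\infty \frac{\EE_P\bigl[(\xi_k^x)^2\{\mu^y-\widehat\mu_{k-1}^y\}^2\bigr]}{k/\log k}=0
\]
(immediate from \cref{assumption:SeqGCM-regularity} since $(k\log^{1+\delta}k)^{-1}$ is summable) and then invokes the black-box distribution-uniform strong law of \citet[Theorem~2]{waudby2023distribution}. You take a genuinely different, self-contained route: you exploit nonnegativity of the summand to reduce the time-uniform tail to a dyadic supremum, apply Markov's inequality block-by-block, and use the Ces\`aro bound $\sup_P\EE_P T_k = O((\log k)^{-(2+\delta)})$ to make the resulting series $\sum_j (j+1)^{-(1+\delta)}$ converge. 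This buys independence from the external SLLN at the cost of a page of elementary estimates; the paper's route is a one-liner once that SLLN is available, and indeed you note this alternative at the end. Your observation that the displayed quantity is $(\nu_i^\yx)^2$ rather than $(\nu_i^\xy)^2$ is correct---this is a harmless labeling slip in the lemma statement---and your use of $\EE_P[\xi^x\mid Z]=0$ plus sample-splitting to equate $\EE_P V_i$ with $\Var_P(\nu_i^\yx)$ is exactly the implicit step the paper also relies on.
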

\begin{proof}
  Using \cref{assumption:SeqGCM-regularity}, let $C_\Pnull > 0$ be a constant so that
  \begin{equation}
    \sup_\Pin \EE_\P \left [ (\xi_n^x)^2 \{\mu^y(Z_n) - \widehat \mu_{n-1}^y(Z_n) \}^2 \right ] \leq \frac{C_\Pnull}{ (\log n )^{2+\delta}}.
  \end{equation}
  Therefore, we have that
  \begin{align}
    &\lim_\mto \sup_\Pnullin \sum_{k=m}^\infty \frac{\EE_\P \left [ (\xi_k^x)^2 \{\mu^y(Z_k) - \widehat \mu_{k-1}^y(Z_k) \}^2 \right ]}{k (\log k)^{-1}}\\
    \leq\ & \lim_\mto \sum_{k=m}^\infty \frac{C_\Pnull }{k (\log k )^{2+\delta-1}}\\
    =\ & \lim_\mto \sum_{k=m}^\infty \frac{C_\Pnull }{k (\log k )^{1+\delta}} \\
    =\ &0.
  \end{align}
  Combining the above with \citet[Theorem 2]{waudby2024distribution}, we have that
  \begin{equation}
    \frac{1}{n}\sum_{i=1}^n (\xi_i^x)^2 \{ \mu^y - \widehat \mu_{i}^y(Z_i) \}^2 = \oPnullas \left ( 1/\log n \right ),
  \end{equation}
  completing the proof.
\end{proof}

\section{Auxiliary lemmas}

\begin{lemma}[Lipschitzness of $g^\star(\sqrt{\cdot})$]\label{lemma:Lipschitzness of gsqrt}
  Consider the function $g^\star$ given by
  \begin{equation}
    g^\star(x) = x^2 + 2 \log \Phi(x);\quad x \geq 0.
  \end{equation}
  Let $a > 0$. Then
  $g^\star(\sqrt{x})$ is Lipschitz continuous on $[a, \infty)$.
\end{lemma}
\begin{proof}
  Consider the derivative $f$ of $g^\star(\sqrt{\cdot})$:
  \begin{equation}
    f(x) := \frac{\dd g^\star(\sqrt{x})}{\dd x} = 1 + \frac{\phi(\sqrt{x})}{\sqrt{x} \Phi(\sqrt{x})}.
  \end{equation}
  Note that $f$ is continuous on $[a, \infty)$. Clearly, we have that $f(x) \to 1$ as $x \to \infty$ and that $f(a)$ is finite. It follows that $f$ is bounded on $[a ,\infty)$ by successive applications of the extreme value theorem and hence $g^\star(\sqrt{\cdot})$ is Lipschitz on $[a, \infty)$.
\end{proof}

\begin{lemma}[Uniform multiplicative ``continuity'' of $g^\star$]\label{lemma:g-properties}
  Consider the function $g^\star : [0,\infty) \to \RR$ defined by
  \begin{equation}
    g^\star(x) = x^2 + 2\log \Phi(x);~~x \geq 0.
  \end{equation}
  Fix $\delta \in (0, 1)$. For any $x \geq 0$, it holds that
  \begin{equation}
    \left \lvert g^\star \left ( \frac{x}{1-\delta} \right ) - \frac{g^\star \left ( x \right )}{(1-\delta)^2} \right \rvert \leq -g^\star(0) \left (  \frac{1}{(1-\delta)^2} - 1 \right ).
  \end{equation}
\end{lemma}
\begin{proof}
  Define the function
  \begin{equation}
    f(x) := g^\star \left ( \frac{x}{1-\delta} \right ) - \frac{g^\star(x)}{(1-\delta)^2} = 2 \log \Phi\left (\frac{x}{1-\delta}\right ) - \frac{2 \log \Phi(x)}{(1-\delta)^2}
  \end{equation}
  for $x \geq 0$. Computing the derivative of $f$, we have
  \begin{equation}
    \frac{\dd f(x)}{\dd x} = \frac{2}{1-\delta} \left ( \frac{\phi \left ( \frac{x}{1-\delta} \right )}{\Phi \left ( \frac{x}{1-\delta} \right )} - \frac{\phi(x)}{\Phi(x)(1-\delta)} \right ).
  \end{equation}
  Notice that for $x \geq 0$, it holds that
  \begin{equation}
    \frac{\phi\left (\frac{x}{1-\delta}\right )}{\Phi \left ( \frac{x}{1-\delta}\right )} \leq \frac{\phi(x)}{\Phi(x)}\leq \frac{\phi(x)}{\Phi(x) (1-\delta)},
  \end{equation}
  where the first inequality follows from the fact that $\phi$ and $\Phi$ are decreasing and increasing on $[0, \infty)$, respectively, and from the fact that $0 < \delta < 1$. Consequently, the derivative of $f$ is negative on $[0,\infty)$.
  Combined with the facts that $f(x) \to 0$ as $\xto$ and that $f(0) > 0$, it is the case that $f$ is maximized at $x = 0$. In other words, $|f(x)| \leq f(0)$ for all $x \geq 0$, and hence for all $x \geq 0$,
  \begin{equation}
    |f(x)| = g^\star \left ( \frac{x}{1-\delta} \right ) - \frac{g^\star(x)}{(1-\delta)^2} \leq -g^\star(0) \left ( \frac{1}{(1-\delta)^2} - 1 \right ),
  \end{equation}
  with equality attained at 0.
  This completes the proof.
\end{proof}

\begin{lemma}\label{lemma:uniform slutsky}
  Let $X$ be a random variable and let $f$ be a continuous, strictly increasing function on $[a, \infty)$ for some $a \in \RR$. Then
  \begin{equation}
    \lim_{h \to 0^+}\sup_{x \geq a}\left \lvert \P \left ( f \left ( \frac{X}{1+h} \right ) \geq x \right ) - \P \left ( f \left ( X \right ) \geq x \right )\right \rvert = 0.
  \end{equation}
\end{lemma}
\begin{proof}
  Using continuity and monotonicity (and hence invertibility) of $f$, we have for any $x \geq a$,
  \begin{align}
    \P \left ( f \left ( \frac{X}{1+h} \right ) \geq x \right ) &= \P \left ( \frac{X}{1+h} \geq f^{-1}(x) \right ).
  \end{align}
  By Slutsky's theorem, we have for any $y$ in the preimage $f^{-1}([a, \infty))$,
  \begin{equation}
    \lim_{h \to 0^+}\P \left ( \frac{X}{1+h} \geq y \right ) = \P \left ( X \geq y \right ).
  \end{equation}
  Using the above and the fact that $f$ is invertible on $[a, \infty)$, we have for any $x$,
  \begin{align}
    \lim_{h \to 0^+} \left \lvert \P \left ( f \left ( \frac{X}{1+h} \right )\geq x  \right ) - \P \left ( f(X) \geq x \right )\right \rvert &= \lim_{h \to 0^+} \left \lvert \P \left ( \frac{X}{1+h}\geq f^{-1}(x)  \right ) - \P \left ( X \geq f^{-1}(x) \right )\right \rvert\\
    &= 0.
  \end{align}
  By \cref{lemma:quantile uniformity for free}, we have that the above holds quantile-uniformly:
  \begin{equation}
    \lim_{h \to 0^+} \sup_{x \geq a }\left \lvert \P \left ( f \left ( \frac{X}{1+h} \right )\geq x  \right ) - \P \left ( f(X) \geq x \right )\right \rvert,
  \end{equation}
  completing the proof.
\end{proof}
The following serves as a generalization of \citet[Lemma 2.11]{van2000asymptotic} for CDF-like functions.

\begin{lemma}[Quantile-uniformity for continuous CDF-like functions]\label{lemma:quantile uniformity for free}
  Let $\infseqn{Y_n}$ be a sequence of random variables on a probability space $\probspace$. Let $a \in \RR$. Suppose that for some non-decreasing and continuous function $F : [a, \infty) \to (0, 1)$ satisfying $\lim_{x\to\infty} F(x) = 1$, it holds that
  \begin{equation}
    \forall x \geq a,\quad \lim_\mto \abs{\P \left ( Y_m \geq x \right ) - (1-F(x))} = 0.
  \end{equation}
  Then the same convergence occurs uniformly in $x \in [a, \infty)$, i.e.
  \begin{equation}
    \lim_\mto \sup_{x\geq a} \abs{\P \left ( Y_m \geq x \right ) - (1-F(x))} = 0.
  \end{equation}
\end{lemma}
\begin{proof}
  Fix $L \in \NN \setminus \{1\}$. By continuity of $F$, choose $x_0 = a$ and $x_1, \dots, x_{L-1}$ so that $F(x_\ell) = F(a) + (1-F(a)) \ell/L$ for each $\ell \in [L-1]$. Take $x_L = \infty$ so that $F(x_L) := \lim_{x \to \infty} F(x) = 1$ as a convention, noting that $F(x_L) = 1 = F(a) + (1-F(a))L/L $. Then, for any $x \in [a, \infty)$, we have that whenever $x_\ell \leq x < x_{\ell+1}$,
  \begin{align}
    \P \left ( Y_m \geq x \right ) - (1-F(x)) &\leq \P \left ( Y_m \geq x_\ell \right ) - (1-F(x_{\ell + 1})) \\
                                                           &=  \P \left ( Y_m \geq x_\ell \right ) - \left [1- \left (F(a) + (1 - F(a))\frac{\ell + 1}{L} \right ) \right ] \\
                                                           &= \P \left ( Y_m \geq x_\ell \right ) - (1 - F(x_\ell)) + \frac{1 - F(a)}{L} \\
    &\leq \P \left ( Y_m \geq x_\ell \right ) - (1 - F(x_\ell)) + \frac{1}{L}
  \end{align}
  Similarly, it holds that for any $x \in [x_\ell, x_{\ell + 1})$,
  \begin{align}
    \P \left ( Y_m \geq x \right ) - (1-F(x)) &\geq \P( Y_m \geq x_{\ell+1}) - (1-F(x_\ell)) \\
                                              &= \P \left ( Y_m \geq x_{\ell+1} \right ) - \left [ 1 - \left ( F(a) + (1-F(a)) \frac{\ell}{L} \right ) \right ] \\
                                              &= \P \left ( Y_m \geq x_{\ell+1} \right ) - \left [ 1 - \left ( F(a) + (1-F(a)) \frac{\ell + 1}{L} - (1-F(a)) \frac{1}{L} \right ) \right ] \\
                                              &= \P \left ( Y_m \geq x_{\ell+1} \right ) - \left [ 1 - F(x_{\ell+1})  \right ] - \frac{1-F(a)}{L} \\
                                              &\geq \P \left ( Y_m \geq x_{\ell+1} \right ) - \left [ 1 - F(x_{\ell+1})  \right ] - \frac{1}{L}.
  \end{align}
  Combining the above to inequalities, taking suprema over $x \geq a$ and limits as $\mto$, we have that
  \begin{equation}
    \lim_\mto \sup_{x \geq a} \abs{\P \left ( Y_m \geq x \right ) - (1-F(x))} \leq \frac{1}{L}.
  \end{equation}
  Since $L$ was arbitrary, we have that
  \begin{equation}
    \lim_\mto \sup_{x \geq a} \abs{\P \left ( Y_m \geq x \right ) - (1-F(x))} = 0,
  \end{equation}
  completing the proof.
\end{proof}

\end{document}